\DeclareMathSymbol{\shortminus}{\mathbin}{AMSa}{"39}
\newcommand\reallywidehat[1]{%
\savestack{\tmpbox}{\stretchto{%
  \scaleto{%
    \scalerel*[\widthof{\ensuremath{#1}}]{\kern.1pt\mathchar"0362\kern.1pt}%
    {\rule{0ex}{\textheight}}%WIDTH-LIMITED CIRCUMFLEX
  }{\textheight}% 
}{2.4ex}}%
\stackon[-6.9pt]{#1}{\tmpbox}%
}
\def\OO{{\mathcal O}}
\def\LL{\mathcal{L}}
\def\LL{\mathcal{L}}
\def\F{\mathcal{F}}
\def\G{\mathcal{G}}
\def\J{\mathcal{J}}
\def\cP{\mathcal{P}}
\def\Pic0{{\rm Pic}^0}
\def\Aut0{{\rm Aut}^0}
\def\Z{\mathcal Z}
\def\T{\mathcal T}
\def\*{{\underline *}}
\def\utheta{{\underline \theta}}
\def\Alb{{\rm Alb}\,}
\def\ev{\mathrm{ev}}
\DeclareMathOperator{\codim}{codim}
\theoremstyle{plain}
\newtheorem{theorem}{Theorem}[subsection]
\newtheorem{theoremalpha}{Theorem}
\newtheorem{proposition/example}[theorem]{Proposition/Example}
\newtheorem{proposition}[theorem]{Proposition}
\newtheorem{corollary}[theorem]{Corollary}
\newtheorem{lemma}[theorem]{Lemma}
\theoremstyle{definition}
\newtheorem*{introdefinition}{Definition}
\newtheorem{definition}[theorem]{Definition}
\newtheorem{remark}[theorem]{Remark}
\newtheorem{example}[theorem]{Example}
\newtheorem{conjecture/question}[theorem]{Conjecture/Question}
\newtheorem{remark/definition}[theorem]{Remark/Definition}
\newtheorem{notation/assumptions}[theorem]{Assumptions/Notation}
\newtheorem{setting/notation}[theorem]{Setting and Notation}
\numberwithin{equation}{section}
\theoremstyle{remark}
\begin{document}
\title[Generation and ampleness of coherent sheaves on abelian varieties]
{Generation and ampleness of coherent sheaves on abelian varieties, with application to Brill-Noether theory
 }
 \author[G.Pareschi]{Giuseppe Pareschi}

\address{Dipartimento di Matematica,
              Universit\`a di Roma Tor Vergata\\Italy}
\email{pareschi@mat.uniroma2.it}
 \thanks{Partially supported by  the MIUR Excellence Department Project MatMod@TOV awarded to the Department of Mathematics, University of Rome Tor Vergata.}

%\subjclass[2010]{14K25; 32G20}
\begin{abstract}  We introduce a variant of  global generation for coherent sheaves on abelian varieties which, under certain circumstances, implies ampleness. This extends a criterion of Debarre asserting that a continuously globally generated coherent sheaf on an abelian variety is ample. We apply this to show the ampleness of certain sheaves, which we call naive Fourier-Mukai-Poincar\'e transforms, and to  study the structure of GV sheaves. In turn, one of these applications allows to extend  the classical existence and connectedness results of Brill-Noether theory to a wider context, e.g. singular curves equipped with a suitable morphism to an abelian variety. Another application is a general inequality of Brill-Noether type involving the Euler characteristic and the homological dimension.
\end{abstract}
%\dedicatory{To Enrico Arbarello}
\maketitle

\section{Introduction}\label{intro} 

%%%%

\subsection{Motivation: continuous global generation, M-regularity, and ampleness }\label{intro-CGG} We work with projective varieties over an algebraically closed field $k$. On abelian (or, more generally, irregular) varieties, the basic notion of global generation of a coherent sheaf (at a given point) admits a  variant, introduced by M.~Popa and the author, called \emph{continuous global generation} (CGG for short), see  \cite[Definition 2.10]{reg1}.  This is as follows: given a coherent sheaf $\F$ on an abelian variety $A$, a closed point $x\in A$, and a Zariski open set $U\subset \Pic0 A$, one considers the sum of twisted evaluation maps
\begin{equation}\label{evU}
 \ev_U(x):\bigoplus_{\alpha\in U}H^0(A,\F\otimes P_\alpha)\otimes P_\alpha^{-1}\rightarrow \F_{|x}\>,
 \end{equation}
where: $\F_{|x}:=\F\otimes k(x)$ denotes  the fiber of $\F$ at the point $x$, and $P_\alpha=\cP_{|A\times\{\alpha\}}$ denotes the line bundle on $A$ parametrized by a point $\alpha\in \Pic0 A$ via the (normalized) Poincar\`e line bundle $\cP$ on $A\times \Pic0 A$.  

The sheaf  $\F$ is said to be CGG at $x$ if this map is surjective \emph{for all} (non empty) Zariski open subsets of $\Pic0 A$. Of course, if this holds for every $x\in A$ the sheaf $\F$ is said to be CGG.\footnote{There is a more general version of these notions and their consequences holding for sheaves on any  variety equipped with a morphism to an abelian variety, see Definition \ref{def:irreg}. In this paper we mostly consider sheaves on abelian varieties.}
 
  The condition of being CGG   neither implies nor is implied by  the usual \emph{global generaton} (GG). For example, the line bundle associated to a theta divisor on a p.p.a.v. is CGG but  not GG and the structure sheaf of an abelian variety is GG but  not CGG. However the following three facts make the CGG property significant:
  
  \noindent  {\bf (i)} \emph{Ampleness.}   Let $\mu_N:A\rightarrow A$ be the multiplication by $N$. If $\F$ is a CGG coherent sheaf on $A$  then there is a $N\gg 0$ such that $\mu_N^*(\F\otimes P_\alpha)$ is GG \emph{for all} $\alpha\in\Pic0 A$. Consequently, a CGG sheaf on an abelian variety is ample (\cite[Proposition 3.1 and Corollary 3.2]{debarre}).\footnote{(a) The notion of ampleness have been extended to coherent sheaves by Kubota \cite{kubota}, and many properties holding for vector bundles extend to this setting, see \cite{debarre}.\\
  (b) The quoted result is stated in \emph{loc cit} over the complex numbers, but the proof works over any algebraically closed field.}

  \noindent {\bf (ii)}  \emph{Criterion for global generation.}  If $\F$ and $L$ are respectively a CGG coherent sheaf on $A$, and a CGG line bundle on a subvariety of $A$, then $\F\otimes L$ is GG (this follows by considering sections of the form $s_\alpha\cdot t_{-\alpha}$ with $s_\alpha\in H^0(\F\otimes P_\alpha)$ and $t_{-\alpha}\in H^0(L\otimes P_\alpha^{-1})$, \cite[Proposition 2.12]{reg1}).
  
  \noindent {\bf (iii)}  \emph{Cohomological criterion.}  There is a natural vanishing condition on the higher cohomology, named\emph{ M-regularity}, implying CGG (\cite{reg1}).

This package has found various applications. To name a few: effective projective normality and  syzygies of abelian varieties (e.g. \cite{survey1},\cite{ito}); effective birationality of pluricanonical maps of irregular varieties (e.g. \cite{survey1}, \cite{jlt}, \cite{bicanonical}, \cite{jiang-sun}, \cite{cinesi}); positivity of direct images of pluricanonical sheaves  of varieties mapping to abelian varieties (e.g. \cite{debarre},  \cite{cy}, \cite{loposc}, \cite{M1},\cite{M2}, \cite{meng-popa});  birational geometry and volume of irregular varieties via eventual maps (e.g. \cite{zhi}).

The  property of being CGG, as well as the M-regularity condition implying it, are quite strong and not often verified.  In this paper we introduce the following weaker condition with the purpose of  enlarging  the range of applicability of this circle of ideas,  especially  Debarre's criterion for ampleness mentioned in {\bf (i)}.\footnote{In the same spirit,  a related notion, namely \emph{weak CGG}, was introduced by M. Popa and the author in \cite[Definition 2.1]{reg2}, but the notion given here is better behaved.}

%%%%

 \subsection{Generation by a set of subvarieties of the dual abelian variety }\label{intro-gen}  
 \begin{introdefinition}
 Given a finite set of irreducible subvarieties of $\Pic0 A$, say \ $\Z=\{Z_i\}$ (such that no subvariety  is contained in another one),  a coherent sheaf $\F$ on $A$ is said to be \emph{generated by $\Z$} (at a given point $x\in A$) if the map (\ref{evU}) is surjective \emph{for all open sets $U$ of $\Pic0 A$  such that $U$ meets all subvarieties $Z_i\in\Z$.} 
 \end{introdefinition}
 In this language, to  be CGG means to be generated by the trivial set $\Z=\{\Pic0 A\}$. On the other hand, a GG sheaf is generated by the set consisting the origin alone: $\Z=\{\hat 0\}$ (but the converse does not hold in general). We say that a coherent sheaf is \emph{generated} if it is generated by some set of subvarieties (this  coincides with the notion of algebraic generation of \cite[Definition 3.2]{lin-yu}). 
  %It is easily seen that this is equivalent to the surjectivity of  the map
%  \[
%\bigoplus_{\alpha\in \Pic0 A}H^0(A,\F\otimes P_\alpha)\otimes P_\alpha^{-1}\rightarrow \F\>. \]

 There is a natural notion of \emph{irredundant} generating set (see Remark \ref{irredundant}),  and it can be shown that, for any  finite set of subvarieties $\Z=\{Z_i\}$ as above, there are generated sheaves $\F$ having  $\Z$ as irredundant generating set (Example \ref{any} below).

  Next, an irreducible subvariety $Z$ of an abelian variety  $B$ is said \emph{to span}  $B$ if $\overbrace{Z+\cdots+Z}^N=B$ for some $N$. A finite set of irreducible subvarieties $\mathcal Z=\{ Z_i\}$ as above is said to \emph{strongly span} $B$  if \emph{each subvariety $Z_i$ spans $B$. }

Items {\bf (i)} and {\bf (ii)} of the previous subsection generalize as follows:
 
 \begin{theoremalpha}[Nefness/ampleness]\label{A}  If $\F$ is generated  then it is nef. If $\F$ is generated by a set of subvarieties $\mathcal Z$ strongly spanning $\Pic0 A$ then $\F$ is  ample \emph{(Theorem \ref{ample-criterion}).}
 \end{theoremalpha}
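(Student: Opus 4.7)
The plan is to prove the two assertions separately: nefness uses only the generation hypothesis, while ampleness will be reduced to Debarre's theorem \textbf{(i)} by showing that some tensor power of $\F$ is CGG.

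For nefness, I would use the standard criterion reducing the question to smooth curves: it is enough to show that $\phi^*\F$ is nef on $C$ for every morphism $\phi:C\to A$ from a smooth projective curve, i.e.\ that every line-bundle quotient $L$ of the torsion-free part of $\phi^*\F$ has $\deg L\geq 0$. Given such $L$, pick any point $x\in C$ and any open $U\subset\Pic0 A$ meeting every $Z_i\in\Z$ (which exists since each $Z_i$ is non-empty). The generation hypothesis at $\phi(x)$ yields an $\alpha\in U$ and an $s\in H^0(A,\F\otimes P_\alpha)$ whose image under the composition $\phi^*\F\otimes\phi^*P_\alpha\twoheadrightarrow L\otimes\phi^*P_\alpha$ does not vanish at $x$; hence $L\otimes\phi^*P_\alpha$ has a non-zero global section. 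Since $\phi^*P_\alpha$ has degree $0$ on $C$, this forces $\deg L\geq 0$. The strong spanning hypothesis plays no role here.

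For ampleness I would show that $\F^{\otimes N}$ is CGG for $N$ sufficiently large, which by Debarre's theorem makes $\F^{\otimes N}$ ample; the ampleness of $\F$ then follows by a standard descent (e.g.\ through the surjection $\F^{\otimes N}\twoheadrightarrow\Sym^N\F$ and Kubota's formalism for ampleness of coherent sheaves). The key group-theoretic input is that any non-empty open $U\subset\Pic0 A$ generates $\Pic0 A$: indeed the subgroup $\langle U\rangle$ is open, hence closed, in the connected algebraic group $\Pic0 A$, so $NU:=U+\cdots+U=\Pic0 A$ for $N$ large. Fix any such $U$ that also meets all $Z_i$. For each non-empty open $V\subset\Pic0 A$, pick $\beta\in V\cap NU$, decompose $\beta=\alpha_1+\cdots+\alpha_N$ with $\alpha_i\in U$, and construct sections of $\F^{\otimes N}\otimes P_\beta$ as diagonal restrictions of external tensors $s_1\boxtimes\cdots\boxtimes s_N$ with $s_i\in H^0(A,\F\otimes P_{\alpha_i})$.

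The main obstacle is then to check that such tensor sections, as the tuple $(\alpha_1,\ldots,\alpha_N)$ varies over $U^N\cap s_N^{-1}(V)$, actually span the fiber $\F_x^{\otimes N}$ at every $x\in A$. The crucial flexibility is that for $N$ large the sum constraint $\sum\alpha_i=\beta\in V$ is loose: for any $\alpha_i\in U$ and any $\beta\in V$, the equation $\sum_{j\neq i}\alpha_j=\beta-\alpha_i$ has a solution in $U^{N-1}$ because $(N-1)U=\Pic0 A$, so the projection of $U^N\cap s_N^{-1}(V)$ to each factor is all of $U$. By the generation hypothesis the sections $s_i(x)$ coming from $\alpha_i\in U$ then span $\F_x$ at each factor, and a careful linear-algebra argument — compatibly choosing tuples $(\alpha_1,\ldots,\alpha_N)$ so that the pure tensors $s_1(x)\otimes\cdots\otimes s_N(x)$ span $\F_x^{\otimes N}$ while respecting the linear constraint $\sum\alpha_i\in V$ — yields the CGG property for $\F^{\otimes N}$, closing the reduction to Debarre.
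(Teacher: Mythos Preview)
Your nefness argument is correct, though the paper takes a shorter route: by noetherianity (Remark~\ref{finite sum}), a generated sheaf is a quotient of a finite direct sum of line bundles $P_\alpha^{-1}$, all numerically trivial, hence nef.

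Your ampleness argument has a genuine gap. Notice that your reasoning never invokes the \emph{strong spanning} hypothesis: you only use that the fixed open set $U$ is non-empty (to get $NU=\Pic0 A$) and that it meets all $Z_i$ (to get generation of $\F_x$ from sections over $U$). If the argument were valid, it would prove that \emph{every} generated sheaf is ample. This is false, and the paper supplies the counterexample immediately after Theorem~\ref{ample-criterion}: on $A=A_1\times A_2$, the sheaf $\F=p_1^*\F_1\oplus p_2^*\F_2$ (with $\F_i$ CGG on $A_i$) is generated by $\{\Pic0 A_1\times\{\hat 0\},\ \{\hat 0\}\times\Pic0 A_2\}$ but is not ample. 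In this example your ``careful linear-algebra argument'' cannot be completed: to reach the summand $(\F_1)_{x_1}^{\otimes N}$ of $\F_x^{\otimes N}$ via a pure tensor $s_1(x)\otimes\cdots\otimes s_N(x)$, each $s_i$ must lie in $H^0(p_1^*\F_1\otimes P_{\alpha_i})$, which forces every $\alpha_i$ into $\Pic0 A_1\times\{\hat 0\}$; but then $\sum\alpha_i\in\Pic0 A_1\times\{\hat 0\}$ as well, and this need not meet an arbitrary open $V$. The freedom to move each $\alpha_i$ individually within $U$ is illusory here, because the sections you actually need may only arise from a proper closed subset of $U$, and the sum constraint then confines $\beta$ to a proper closed subset of $\Pic0 A$.

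The paper's argument repairs this by tracking how the generating set transforms under tensor products. Lemma~\ref{lem:sum} shows that if $\F$ is generated by $\Z=\{Z_i\}$ and $\G$ by $\mathcal Y=\{Y_j\}$, then $\F\otimes\G$ is generated by $\{Z_i+Y_j\}$. Iterating, $\F^{\otimes M}$ is generated by $\{Z_{i_1}+\cdots+Z_{i_M}\}_{(i_1,\dots,i_M)\in I^M}$. Now the strong spanning hypothesis is precisely what makes every such $M$-fold sum equal to $\Pic0 A$ for $M$ large (uniformly, since $I$ is finite). Hence $\F^{\otimes M}$ is generated by $\{\Pic0 A\}$, i.e.\ CGG, and one concludes via $S^M\F$ and Debarre/Kubota as you indicated.
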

 
 \begin{theoremalpha}[Non-generation locus]\label{B}  If $\F$ is generated by a set $\Z=\{Z_i\}$ and  $L$ is a CGG line bundle on a subvariety $X$ of $A$, then the non-generation locus of $\F\otimes L$ \emph{(namely the locus $B(\F\otimes L)$ where $\F\otimes L$ is not globally generated)} is explicitly controlled in function of  the base loci of the line bundles $L\otimes P_\alpha$, with $\alpha\in Z_i$  \emph{(Proposition \ref{base-locus})}. \emph{ (As expected, it turns out that the bigger are the $Z_i$, the smaller is $B(\F\otimes L)$).}
 \end{theoremalpha}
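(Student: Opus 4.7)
The plan is to adapt the product-of-sections technique recalled in item (ii) of the introduction, which proves $\F\otimes L$ is GG whenever $\F$ is CGG and $L$ is a CGG line bundle. The new input is that we now only have partial generation of $\F$ (controlled by $\Z=\{Z_i\}$), so only the values of $\alpha$ where the CGG property of $L$ actually ``hits'' each $Z_i$ will be usable.

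First I would fix $x\in X$ and introduce the set
\[
U_x=\{\alpha\in \Pic0 A\mid x\notin B(L\otimes P_\alpha^{-1})\}.
\]
A standard semicontinuity argument applied to the relative base locus of the family $\{L\otimes P_\alpha^{-1}\}_{\alpha\in \Pic0 A}$ shows that $U_x$ is Zariski open in $\Pic0 A$. The CGG hypothesis on $L$ guarantees that $U_x$ is nonempty — in fact dense — since for every nonempty open $V\subset \Pic0 A$ there is $\alpha\in V$ and a section $t_{-\alpha}\in H^0(X,L\otimes P_\alpha^{-1})$ with $t_{-\alpha}(x)\neq 0$.

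Next I would carry out the multiplication step. For every $\alpha\in U_x$, pick $t_{-\alpha}$ as above; then for any $s_\alpha\in H^0(A,\F\otimes P_\alpha)$ the product $s_\alpha\cdot t_{-\alpha}$ restricts to a section of $\F\otimes L$ on $X$ whose value at $x$ is $s_\alpha(x)\otimes t_{-\alpha}(x)$. Because $t_{-\alpha}(x)\neq 0$ (and trivializes $P_\alpha^{-1}$ locally near $x$), varying $s_\alpha$ and summing over $\alpha\in U_x$, the span of such values at $x$ equals the image of $\ev_{U_x}(x)\otimes L_{|x}$. Consequently, $\F\otimes L$ is globally generated at $x$ as soon as $\ev_{U_x}(x)$ is surjective, which by the hypothesis that $\F$ is generated by $\Z$ holds whenever $U_x$ meets every $Z_i$.

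It remains to rewrite the failure of this condition. One has $U_x\cap Z_i=\emptyset$ precisely when $Z_i\subset \Pic0 A\setminus U_x=\{\alpha\mid x\in B(L\otimes P_\alpha^{-1})\}$, i.e.\ when $x\in \bigcap_{\alpha\in Z_i}B(L\otimes P_\alpha^{-1})$. Taking the union over $i$ yields the desired inclusion
\[
B(\F\otimes L)\cap X\ \subset\ \bigcup_i\ \bigcap_{\alpha\in Z_i} B(L\otimes P_\alpha^{-1}),
\]
and this right-hand side manifestly shrinks as the $Z_i$ grow, confirming the parenthetical remark. The main technical obstacle I foresee is justifying the openness of $U_x$ cleanly (i.e.\ the semicontinuity of base loci in the family $L\otimes P_\alpha^{-1}$ over $X\times \Pic0 A$) and, more delicately, taking care of sign conventions and of whether $\F\otimes L$ is viewed on $X$ or pushed forward to $A$ — once those bookkeeping issues are settled, the multiplication argument goes through essentially verbatim.
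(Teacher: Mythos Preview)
Your proposal is correct and follows essentially the same route as the paper's proof of Proposition~\ref{base-locus}: define the set of $\alpha$ for which $x$ is not in the base locus of $L\otimes P_\alpha^{-1}$, observe it is (or contains) a nonempty open set by the CGG hypothesis on $L$, and then use the product-of-sections argument to reduce global generation of $\F\otimes L$ at $x$ to the surjectivity of $\ev_{U_x}(x)$, which holds once $U_x$ meets every $Z_i$. The paper packages the multiplication step in a small commutative diagram rather than spelling it out, and sidesteps your semicontinuity worry by only asserting that the relevant set \emph{contains} an open subset $U_y(L)$ (which is all that is needed), but the substance is identical.
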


%%%
 
 \subsection{Relationship with the Fourier-Mukai-Poincar\'e transform }\label{relation} We are left with the problem of finding  criteria or applicable methods for proving that a given coherent sheaf is generated. In view of Theorem \ref{A}, it is especially relevant to understand whether a given sheaf $\F$ is generated by some set of subvarieties strongly spanning $\Pic0 A$. The author hopes that this approach will be helpful in addressing the ampleness of vector bundles on subvarieties of abelian varieties.
 
The next result is a step in this direction. Namely we provide a sufficient condition (close to be a characterization) ensuring that a given coherent sheaf $\F$ is generated, and we identify the irredundant set of subvarieties doing the job. Roughly speaking, this is a  subset of the set of irreducible components of the supports of the sheaves appearing in the torsion filtration (\cite[Definition 1.1.4]{huybrechts-lehn}) of a certain coherent sheaf on $\Pic0 A$ associated to $\F$, referred to as  \emph{the naive Fourier-Mukai-Poincar\'e (FMP) transform of $\F$}. However, except for some cases, this is just a first step toward the solution of the above problem, since in general the torsion filtration of the naive FMP transform of $\F$ is not easy to describe.
 
 Passing to a more detailed description, we  will consider the FMP functor in the following form. Let $\cP$ be the normalized Poincar\`e line bundle on $A\times\Pic0 A$. We consider the Fourier-Mukai equivalence
 \begin{equation}\label{FM}\Phi^A_{\cP^{-1}}: D(A)\rightarrow D(\Pic0 A), \qquad \Phi^A_{\cP^{-1}}(\>\cdot\>)=\mathbf Rq_*(p^*(\>\cdot\>)\otimes \cP^{-1})
  \end{equation}
    where $p$ and $q$ are the  projections on $A$ and $\Pic0 A$. 
  We will also make use of the dualization  functor in the following (unshifted) form:
\begin{equation}\label{dual}\F^\vee:=\mathbf R\mathcal{H}om(\F,\OO_A).
\end{equation}

The above mentioned  \emph{naive FMP transform} of a given coherent sheaf $\F$ on $A$ is defined as follows
\begin{equation}\label{T(F)}
\T(\F):=R^g\Phi^A_{\cP^{-1}}(\F^\vee).
\end{equation}
where $g=\dim A$. By Serre-Grothedieck duality, $H^i(A,\F^\vee\otimes P_\alpha^{-1})=0$ for all $i>g$ and for all $\alpha\in \Pic0 A$. Therefore, by base change and duality, we have the canonical isomorphisms
\[R^g\Phi^A_{\cP^{-1}}(\F^\vee)_{|\alpha}\cong H^g(A, \F^\vee\otimes P_\alpha^{-1})\cong H^0(A,\F\otimes P_\alpha)^\vee
\]
for all $\alpha \in \Pic0 A$ (note that the above $H^g$ is usually a hypercohomology group). Hence (up to duality) the coherent sheaf $\T(\F)$ encodes the variation of the $k$-linear spaces $H^0(A,\F\otimes P_\alpha)$ as $\alpha$ varies in $\Pic0 A$. Therefore it is natural to expect that $\T(\F)$ must be related somehow to the generation of the coherent sheaf $\F$. It turns out that it is the \emph{torsion} of the sheaf $\T(\F)$ what really matters.

 Of course, in general it is not the case  that the naive transform coincides (up to shift) with the  transform in the derived category, namely  that
\begin{equation}\label{GV-sheaf}
\Phi^A_{\cP^{-1}}(\F^\vee)=R^g\Phi^A_{\cP^{-1}}(\F^\vee)[-g].
\end{equation}
In fact the sheaf $\F$ is said to be a \emph{GV sheaf} (generic vanishing sheaf) precisely when (\ref{GV-sheaf}) holds.

The precise relation of the FMP transform with the generation problem is stated in Corollaries \ref{gen-FM} and \ref{generated}. It could be somewhat  imprecisely summarized as follows 

\begin{theoremalpha}[Generation  of sheaves and the FMP transform]\label{C}  (a) The surjectivity of the map (\ref{evU}) can be rephrased in terms of a condition involving both the FMP transform $\Phi^A_{\cP^{-1}}(\F^\vee)$ and the naive FMP transform $R^g\Phi^A_{\cP^{-1}}(\F^\vee)$.

\noindent (b) If $\F$ is generated, then 
a certain subset of the set of  irreducible components of the supports of the sheaves appearing in the torsion filtration
 of the naive FMP transform of $\F$ form an (irredundant) generating set for $\F$ . 
 \end{theoremalpha}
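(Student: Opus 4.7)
The plan is to rewrite the evaluation map (\ref{evU}) via the Fourier--Mukai--Poincar\'e functor and then to extract the generating subvarieties from the torsion filtration of $\T(\F)$. By base change and Serre--Grothendieck duality one has the identifications
\[
R^g\Phi^A_{\cP^{-1}}(\F^\vee)_{|\alpha} \cong H^g(A,\F^\vee\otimes P_\alpha^{-1}) \cong H^0(A,\F\otimes P_\alpha)^\vee
\]
recorded in the excerpt just after (\ref{T(F)}). I would sheafify this over a Zariski open $U\subset\Pic0 A$ so that $\T(\F)^\vee_{|U}$ becomes a coherent model for the family of section spaces $\{H^0(\F\otimes P_\alpha)\}_{\alpha\in U}$. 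Combining with Mukai's inversion formula $\F\cong(-1_A)^*\Phi^{\Pic0 A}_{\cP}\Phi^A_{\cP^{-1}}(\F)[g]$, the map $\ev_U(x)$ is identified with the fiber at $x$ of a canonical ``inverse-transform'' morphism assembled from $\T(\F)^\vee_{|U}$ together with the restriction of $\cP^{-1}$ to $A\times U$.

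For part (a), when $\F$ is not a GV sheaf the derived transform $\Phi^A_{\cP^{-1}}(\F^\vee)$ has cohomology in degrees below $g$, and these lower cohomology sheaves are exactly the correction one must add to $\T(\F)$ in order to reconstruct $\F$. I would apply $\Phi^{\Pic0 A}_\cP$ to the truncation triangle
\[
\tau_{<g}\Phi^A_{\cP^{-1}}(\F^\vee) \longrightarrow \Phi^A_{\cP^{-1}}(\F^\vee) \longrightarrow \T(\F)^\vee[-g]\longrightarrow
\]
(up to the usual dualization of (\ref{dual})) and read off fibers at $x$ after restriction to $U$. The resulting long exact sequence then rephrases the surjectivity of $\ev_U(x)$ as a concrete condition involving the images of both the full transform $\Phi^A_{\cP^{-1}}(\F^\vee)$ and its top cohomology $\T(\F)$ restricted to $U$, which is the content of (a).

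For part (b), assume $\F$ is generated. I would invoke the torsion filtration of $\T(\F)$,
\[
0=T_{-1}\subset T_0\subset T_1\subset\cdots\subset T_g=\T(\F),
\]
in which $T_i/T_{i-1}$ is pure of dimension $i$ (or zero), and let $\{Z_j\}$ be the set of irreducible components appearing in the supports of these quotients. By Step~1 an open $U\subset\Pic0 A$ disjoint from some $Z_j$ cannot detect torsion sections supported on that component, which through the identification above translates into a direction in $\F_{|x}$ missed by $\ev_U(x)$; this will force any generating set to contain $Z_j$. Conversely, once $U$ meets every $Z_j$ the reformulation from part (a), combined with the hypothesis that $\F$ is generated, yields surjectivity at each $x$. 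Discarding from $\{Z_j\}$ the subvarieties contained in another then produces the irredundant generating set.

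\emph{Main obstacle.} The delicate point is the derived-versus-naive comparison in Step~2: when $\F$ is not a GV sheaf, the lower cohomologies $R^i\Phi^A_{\cP^{-1}}(\F^\vee)$ for $i<g$ enter the evaluation map and can a priori create spurious generating loci or cancel necessary ones. Controlling this interference is precisely why (b) asserts only that a \emph{subset} of the components appearing in the torsion filtration of $\T(\F)$ is a generating set, rather than the full collection, and it is the reason the precise statements in Corollaries \ref{gen-FM} and \ref{generated} are more technical than the summary given here.
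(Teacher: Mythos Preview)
Your outline reaches toward the right objects, but the mechanism you propose differs from the paper's and, as written, does not close. The paper does not sheafify over $U$ or invoke Mukai inversion. It works pointwise: for fixed $(x,\alpha)$ it Serre--Grothendieck dualizes the single map $H^0(\F\otimes P_\alpha)\otimes (P_\alpha^{-1})_{|x}\to \F_{|x}$, identifies $(\F_{|x})^\vee$ with $\Ext^g(k(x),\F^\vee)=\Hom_{D(A)}(k(x),\F^\vee[g])$, and then applies the FMP \emph{equivalence} $\Phi^A_{\cP^{-1}}$. Since $\Phi^A_{\cP^{-1}}(k(x))=P_x^{-1}$ is a line bundle, this Hom becomes $H^g(\Phi^A_{\cP^{-1}}(\F^\vee)\otimes P_x)$, and the dualized map factors as an edge map
\[
\mathrm{ed}_x:H^g\bigl(\Phi^A_{\cP^{-1}}(\F^\vee)\otimes P_x\bigr)\longrightarrow H^0\bigl(\T(\F)\otimes P_x\bigr)
\]
followed by evaluation at $\alpha$. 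The precise reformulation (Corollary~\ref{gen-FM}) is: $\ev_U(x)$ is surjective iff $\mathrm{ed}_x$ is injective and the simultaneous evaluation $\mathrm{ev}_x(U):H^0(\T(\F)\otimes P_x)\to\prod_{\alpha\in U}(\T(\F)\otimes P_x)_{|\alpha}$ is injective on $\mathrm{Im}(\mathrm{ed}_x)$.

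Your truncation triangle is not unrelated---tensoring it with $P_x$ and taking $H^g$ would produce exactly $\mathrm{ed}_x$---but you propose instead to apply $\Phi^{\Pic0 A}_\cP$ and then ``restrict to $U$'', which is incoherent as stated: after $\Phi^{\Pic0 A}_\cP$ you are on $A$, with no $U\subset\Pic0 A$ left to restrict to. The Mukai inversion in your Step~1 is a red herring; the paper never uses it for (a). The actual trick is that skyscrapers on $A$ transform to line bundles on $\Pic0 A$, which converts the fiber $\F_{|x}$ into a hypercohomology group on the dual side.

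For (b) your intuition about torsion sections is correct, but the argument hinges on the factorization from (a). The point (Remark~\ref{kernel}) is that a nonzero element of $\ker\mathrm{ev}_x(U)\cap\mathrm{Im}(\mathrm{ed}_x)$ must be a torsion section of $\T(\F)\otimes P_x$; if its support is reduced then $U$ must avoid a component of that support. This is why Corollary~\ref{generated} imposes the reducedness hypothesis, and why only those torsion pieces $\tau$ with $\widetilde V^0(\tau)\ne\emptyset$ contribute: the intersection with $\mathrm{Im}(\mathrm{ed}_x)$ is what selects the ``certain subset''. Your sketch never isolates this intersection, so it does not account for which components actually appear.
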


As mentioned above, the problem in applying condition (b) is that at present there is no general way of describing, in terms of $\F$, the supports of the sheaves appearing in the torsion filtration  
of the naive FMP transform of $\F$  (only for GV sheaves there we have such a description, see Remark \ref{GV-torsion}).

% The hypotheses alluded to in {\bf (6)} are not easy to check, but they are automatically satisfied in certain significant cases (for example, sheaves which are already known to be GG, e.g. the normal and cotangent bundle of a subvariety of an abelian variety, and their symmetric products). However, even in these cases the main problem is to describe the sheaves appearing in the torsion filtration of $\T(\F)$ in function of $\F$. This is well known if $\F$ is a GV sheaf (because, if this is the case, the sheaf $\T(\F)$ coincides, up a shift, with the full FMP transform of $\Phi_{\cP^{-1}}^Q(\F^\vee)$) but we known have an answer in general. 

\subsection{Ampleness of naive FMP transforms (= generalized Picard bundles) }\label{pic}  The next result concerns a lucky class of coherent sheaves which turn out to be  generated and ample, even when they are not GV sheaves: the naive FMP transforms themselves. More generally, the same holds for the naive FMP transforms of coherent sheaves on certain reduced   schemes mapping to abelian varieties. These are defined similarly, with the difference that the FMP functor is not an equivalence anymore (it turns out that, for the specific result described in the present subsection, this is unnecessary).

 Specifically, given an equidimensional, reduced, Cohen-Macaulay projective scheme, equipped with a morphism to an abelian variety $f: X\rightarrow A$, one defines $\cP_X=(f\times {\rm id})^*\cP$ and considers the functor (not an equivalence anymore)
\[\Phi^X_{\cP_X^{-1}}:D(X)\rightarrow D(\Pic0 A)\]
and the (unshifted) dualization functor
\[\Delta_X: D(X)\rightarrow D(X), \qquad \Delta_X(\F)={\bf R}\mathcal Hom(\F,\omega_X).
\]
The naive FMP transform of a coherent sheaf $\F$ on $X$ is defined as
\[\T(\F)=R^d\Phi^X_{\cP_X^{-1}}(\Delta_X(\F)),
\]
where $d=\dim X$, and has the same meaning as discussed in the previous subsection.

We will consider coherent sheaves $\F$  on a reduced scheme $X$ as above  with the property that all subsheaves of $\F$  have reduced scheme-theoretic support (equivalently, one can consider only the subsheaves appearing in the torsion filtration of $\F$). 
We will adopt the following notation: given a sheaf $\F$ on $X$, we denote $\Z(\F)=\{Z_i\}$ the set of irreducible components of the supports of the sheaves on $X$ appearing in the torsion filtration of $\F$, and $f(\Z(\F)):=\{f(Z_i)\}$.

\begin{theoremalpha} [Generation and ampleness of naive FMP transforms]\label{D}  In the above setting, let $\F$ be a coherent sheaf on a scheme $X$ as above, such that all subsheaves of $\F$ have reduced scheme-theoretic support \emph{(the simplest example are torsion free sheaves on $X$)}.
 Then the naive FMP transform $\T(\F)$ is generated by a subset of the collection $f(\Z(\F))$. \\
  In particular, if each subvariety of the set $\Z(\F)$ maps, via the morphism $f$, to a subvariety spanning the abelian variety $A$, then $\T(\F)$ is an ample sheaf on $\Pic0 A$. 
 \end{theoremalpha}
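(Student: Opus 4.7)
The plan is to apply Theorem~\ref{C}(b) to $\T(\F)$, regarded as a coherent sheaf on $\Pic0 A$, and then to invoke Theorem~\ref{A} for the ampleness conclusion.

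First I would reduce the computation to the absolute case $X=A$. Using the projection formula along $f\times\mathrm{id}:X\times\Pic0 A\to A\times\Pic0 A$ together with Grothendieck-Verdier duality for $f$ (where the Cohen-Macaulay hypothesis on $X$ gives $f^!\OO_A=\omega_X[d-g]$, since $A$ is smooth and $\omega_A$ is trivial), one derives
$$\Phi^X_{\cP_X^{-1}}(\Delta_X(\F))\;\cong\;\Phi^A_{\cP^{-1}}((Rf_*\F)^\vee)[g-d],$$
and consequently
$$\T(\F)\;\cong\;R^g\Phi^A_{\cP^{-1}}((Rf_*\F)^\vee).$$
Thus $\T(\F)$ is the top cohomology of the Fourier-Mukai transform of the Grothendieck dual of $Rf_*\F$ on $A$; the hypothesis on reduced subsheaf supports of $\F$, combined with the properness of $f$, implies that every subsheaf of each $R^if_*\F$ has reduced scheme-theoretic support contained in $f(\Z(\F))\subset A$.

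Next I would compute the naive FMP transform of $\T(\F)$, namely $\mathcal N:=R^g\Phi^{\Pic0 A}_{\cP^{-1}}(\T(\F)^\vee)$ on $\Pic0(\Pic0 A)=A$, and control the supports appearing in its torsion filtration. Using Mukai's inversion $\Phi^{\Pic0 A}_{\cP^{-1}}\circ\Phi^A_{\cP^{-1}}=(-1_A)^*[-g]$ applied to the truncation triangles of the complex $\Phi^A_{\cP^{-1}}((Rf_*\F)^\vee)$, together with the standard compatibility of the FMP functor with Grothendieck dualization on abelian varieties, one obtains a finite filtration of $\mathcal N$ whose graded pieces are subquotients of cohomology sheaves of $(-1_A)^*Rf_*\F$. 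The reduced-support hypothesis transferred in the first step then forces the torsion filtration of $\mathcal N$ to be indexed by reduced subvarieties in $f(\Z(\F))$. Applying Theorem~\ref{C}(b) to $\T(\F)$ yields the first assertion: $\T(\F)$ is generated by a subset of $f(\Z(\F))$.

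The ampleness statement would then follow from Theorem~\ref{A}: the assumption that each $f(Z_i)$ spans $A$ ensures that any subcollection of $f(\Z(\F))$ strongly spans $A=\Pic0(\Pic0 A)$, so any sheaf generated by such a subset is ample. The hard part will be the propagation of the reduced-support property through Mukai inversion and Grothendieck dualization in the main step. A priori, the higher FMP cohomology sheaves $R^p\Phi^{\Pic0 A}_{\cP^{-1}}$ could introduce cohomological support loci disjoint from $f(\Z(\F))$; the hypothesis that subsheaves of $\F$ have reduced scheme-theoretic support is precisely what keeps the class of coherent sheaves whose torsion filtration is indexed by subvarieties of $f(\Z(\F))$ stable under the relevant operations.
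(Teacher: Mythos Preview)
Your approach has a genuine gap. You propose to apply Theorem~\ref{C}(b) to $\T(\F)$, which means you must (a) verify that the edge maps ${\rm ed}_\alpha$ of (\ref{edge}) are injective for all $\alpha\in\Pic0 A$, and (b) compute the torsion filtration of the naive FMP transform $\mathcal N=\T_{\Pic0 A}(\T(\F))$ and check that its subsheaves have reduced support lying in $f(\Z(\F))$. You do not address (a) at all, and your treatment of (b) is where the argument breaks down. Mukai inversion gives $\Phi^{\Pic0 A}_{\cP^{-1}}\circ\Phi^A_{\cP^{-1}}=(-1_A)^*[-g]$ on the \emph{full} transform, but $\T(\F)$ is only the top cohomology $R^g$ of the transform of $(Rf_*\F)^\vee$; there is no reason for $\T(\F)$ to be a GV sheaf, and once you truncate you lose control of what the inverse transform does. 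Your asserted filtration of $\mathcal N$ by subquotients of cohomology sheaves of $(-1_A)^*Rf_*\F$ does not follow from anything you have written, and the ``propagation of reduced-support property through Mukai inversion and Grothendieck dualization'' that you flag as the hard part is in fact not established. A further wrinkle: even your reduction step produces $(Rf_*\F)^\vee$, the dual of a genuine complex rather than a sheaf, so you are not in the situation of Proposition~\ref{naiveFMP} anyway.

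The paper avoids all of this by a direct duality argument (essentially Schnell's). One does \emph{not} compute $\T(\T(\F))$ or invoke Theorem~\ref{C}(b). Instead one observes, via Proposition~\ref{identi11} and its adaptation to the relative setting, that for each $x\in X$ and $\alpha\in\Pic0 A$ the Serre--Grothendieck dual of the evaluation map $H^0(\F\otimes f^*P_\alpha)\to(\F\otimes f^*P_\alpha)_{|x}$ factors through the twisted evaluation map $H^0(\T(\F)\otimes P_{f(x)})\otimes P_{f(x)}^{-1}{}_{|\alpha}\to\T(\F)_{|\alpha}$. Letting $x$ vary over an open set $V\subset X$ meeting every $Z_i\in\Z(\F)$, the simultaneous evaluation $H^0(\F\otimes f^*P_\alpha)\to\prod_{x\in V}(\F\otimes f^*P_\alpha)_{|x}$ is injective precisely because of the reduced-support hypothesis on subsheaves of $\F$; dualizing, the sum of twisted evaluation maps for $\T(\F)$ over $f(V)$ is surjective. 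This is the whole argument: the factorization (\ref{factorization-bis}) survives even though $\Phi^X_{\cP_X^{-1}}$ is not an equivalence, and one never needs to analyze $\T(\T(\F))$.
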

 
The above theorem is in fact is a generalization of a result of Schnell, proving that naive FM transforms of torsion free sheaves on abelian varieties  are CGG (\cite[Theorem 4.1]{schnell}).\footnote{Of course Schnell does not use this terminology. Moreover his result is stated in the language of the \emph{symmetric Fourier transform}, introduced in the same paper.} The argument used in the proof is  essentially Schnell's.

We remark that, although  in a different language, a well known example of Theorem \ref{D} is the ampleness of (dual) \emph{Picard bundles}: here $X$ is a smooth complex projective variety, equipped with its Albanese morphism $a:X\rightarrow \Alb X$, and $\F=\LL$ is a line bundle on $X$ satisfying the following vanishing condition on the higher cohomology: 
 \begin{equation}\label{IT(0)Pic}
 H^i(X,\LL\otimes P_\alpha)=0
 \end{equation} for all $\alpha\in\Pic0 X$ and $i>0$.\footnote{When dealing with sheaves on abelian varieties this condition is usually referred to as the \emph{IT(0) condition} (namely $\F$ satisfied the index theorem with index $0$).}   By base change this ensures that $\T(\LL)$ is a vector bundle on $\Pic0 X$. In this case the generating collection is given by a single subvariety, namely the Albanese image $\{a(X)\}$, which automatically spans. By Theorem \ref{D},  $\T(\LL)$ is ample. The dual of $\T(\LL)$, namely $R^0\Phi^X_{\cP_X}(\F)$, 
 is called the Picard bundle associated to the line bundle $\LL$.\footnote{It is easily seen that this definition is equivalent to the one of \cite[\S 6.3.C]{laz2}, where the Picard bundle is defined as a vector bundle on ${\rm Pic}^\lambda X$, where $\lambda$ is the algebraic equivalence class of $L$.} Picard bundles were classically known to be be negative for $\dim X=1$ (\cite{acgh}). This result was subsequently generalized by Lazarsfeld to all smooth projective varieties (\cite[\S 6.3.C]{laz2}). Thus Theorem \ref{D} can be seen as a vast generalization (with a completely different proof) of that result.  
 
 It is worth to remark  that in Theorem \ref{D} we are not assuming condition (\ref{IT(0)Pic}), nor any other vanishing conditions. Interestingly, there are many examples of coherent sheaves not verifying (\ref{IT(0)Pic}) such that nevertheless  their naive FMP transform is locally free (see Remark \ref{loc-free}). 
 
\subsection{Application to Brill-Noether theory of singular curves}  One immediate application of Theorem \ref{D} is to Brill-Noether theory of  (complex) \emph{singular curves \emph{(even reducible and with non-planar singularities)} equipped with a morphism to an abelian variety $A$ such that the image of each component spans $A$}. In this setting we provide analogues  of the existence and connectedness theorems for special divisors, Ghione and Segre-Nagata theorem. We refer to Subsection \ref{sub:BN} for these results.

  %%%
  
   \subsection{A general inequality of Brill-Noether type} Another application of Theorem \ref{D} is a general existence result of Brill-Noether type which, although somewhat weak, is optimal, and turns out to be useful in some applications.  In this subsection we will assume that the ground field is $\mathbb C$. Given a coherent sheaf $\F$ on a complex abelian variety $A$,\footnote{also in this case one could extend the discussion of this topic to varieties equipped of a \emph{finite} morphism  to an abelian varieties, but for sake of simplicity we will stick to abelian varieties} one defines the  cohomological support loci
 \begin{equation}\label{Vi}
 V^{i}(\F)=\{\alpha\in \Pic0 A\>|\>h^i(\F\otimes P_\alpha)>0\}
 \end{equation}
 and 
 \begin{equation}\label{V>0}
 V^{>0}(\F)=\bigcup_{i>0}V^i(\F).
 \end{equation}
 \begin{theoremalpha}\label{cast-de-franchis} Let $\F$ be a coherent sheaf on a complex abelian variety $A$ such that all its torsion subsheaves (if any) have reduced scheme-theoretic support, and each component of the support spans $A$ \emph{(simplest example: a torsion free sheaf on $A$)}. Assume that condition (\ref{IT(0)Pic}) holds, i.e. 
 \[ V^{>0}(\F)=\emptyset .
 \]
 Then 
 \[\chi(\F)\ge{\rm hd}(\F)+1.\]
 \end{theoremalpha}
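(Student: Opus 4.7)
The plan is to combine Theorem D with Le Potier's vanishing theorem via Fourier--Mukai inversion. First I would exploit the IT(0) hypothesis: by Serre duality $H^i(A,\F^\vee\otimes P_\alpha^{-1})=0$ for every $i<g$ and every $\alpha\in\Pic0 A$, hence cohomology and base change imply that $\Phi^A_{\cP^{-1}}(\F^\vee)\cong\T(\F)[-g]$ and that $\T(\F)$ is a locally free sheaf of rank $\chi(\F)$ on $\hat A:=\Pic0 A$. Next I would apply Theorem D to $X=A$ with $f=\id_A$, for which the dualization functor reduces to $(-)^\vee$: the hypothesis on the torsion subsheaves of $\F$ is exactly what Theorem D requires, and since each component of $\Z(\F)$ spans $A$ by assumption, Theorem A forces $\T(\F)$ to be an ample vector bundle on $\hat A$.

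Next I would invert the transform. A direct consequence of Mukai's inversion is $\Phi^{\hat A}_{\cP^{-1}}\circ\Phi^A_{\cP^{-1}}\cong(-1_A)^*[-g]$, so applying $\Phi^{\hat A}_{\cP^{-1}}$ to the identification $\Phi^A_{\cP^{-1}}(\F^\vee)\cong\T(\F)[-g]$ yields
\[
\Phi^{\hat A}_{\cP^{-1}}(\T(\F))\cong(-1_A)^*\F^\vee.
\]
Passing to the $i$-th cohomology sheaf gives $R^i\Phi^{\hat A}_{\cP^{-1}}(\T(\F))\cong(-1_A)^*\EExt^i(\F,\OO_A)$ for every $i$. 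Since $\T(\F)$ is locally free, cohomology and base change identify the fibres of the left-hand side with groups of the form $H^i(\hat A,\T(\F)\otimes P)$ for suitable $P\in\Pic0\hat A$. Tensoring an ample vector bundle by a numerically trivial line bundle preserves ampleness, and $\omega_{\hat A}=\OO_{\hat A}$, so Le Potier's vanishing theorem yields $H^i(\hat A,\T(\F)\otimes P)=0$ for every $i\ge\rk\T(\F)=\chi(\F)$. Consequently $\EExt^i(\F,\OO_A)=0$ for $i\ge\chi(\F)$, which is exactly the inequality $\mathrm{hd}(\F)\le\chi(\F)-1$.

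The only mildly delicate point is the bookkeeping of signs and shifts in Fourier--Mukai inversion needed to obtain the identification of $R^i\Phi^{\hat A}_{\cP^{-1}}(\T(\F))$ with $(-1_A)^*\EExt^i(\F,\OO_A)$; once that identification is in place, the ampleness of $\T(\F)$ supplied by Theorem D feeds directly into Le Potier's vanishing on the abelian variety $\hat A$ (which has trivial canonical class), and the Brill--Noether-type inequality drops out.
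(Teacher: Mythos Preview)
Your argument is correct and follows essentially the same route as the paper's own proof: establish that $\T(\F)$ is an ample vector bundle via Theorem~\ref{D}, invert the FMP transform to identify $R^i\Phi(\T(\F))$ with $\EExt^i(\F,\OO_A)$, and then invoke Le~Potier vanishing on $\hat A$ to kill these sheaves for $i\ge\chi(\F)$. The only cosmetic difference is that the paper uses $\Phi_\cP^{\Pic0 A}$ as the literal inverse of $\Phi^A_{\cP^{-1}}$ (so no $(-1_A)^*$ appears), whereas you use $\Phi^{\hat A}_{\cP^{-1}}$ and carry the harmless $(-1_A)^*$; and the paper phrases the base-change step as the support inclusion $\mathrm{Supp}\,R^i\Phi_\cP(\T(\F))\subseteq V^i(\T(\F))$ rather than as a fibrewise identification.
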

Here ${\rm hd}(\F)$ denotes the homological dimension.
% Vedi gli esercizi su hartshorne (probabilmente presi da ega o sga). In particolare, hd e' il sup delle pd locali in x. a loro volta le pd locali sono il sup degli i tali che ext^i(\F,A) e' non nullo (perche' Ae' un anello locale regolare). Pero' a me non torna tanto che la hd sia il sup delle risoluzioni localmente libere globali. 
Easy examples showing that (at least  the way it is stated) Theorem \ref{cast-de-franchis} is optimal, are: 

\noindent (a) locally free sheaves $\F$ on abelian varieties (of arbitrarily high rank) such that $V^{>0}(\F)=\emptyset$ and $\chi(\F)=1$. (It is known that such sheaves are of the form  $\F=\Phi_{\cP^{-1}}(L^\vee)$, where $L$ is any ample line bundle on $A$. Then $\chi(\F)=1$ and ${\rm rk}(\F)=\chi(L)$.)

\noindent (b) Let $i:C\rightarrow J(C)$ a Abel-Jacobi embedding of a curve in its Jacobian and let $\F=i_*L$, where $L$ is a line bundle on $C$. In this case $V^{>0}(\F)=V^1(\F)$, which  is non-empty if and only if $\deg (L)\le 2g-2$, i.e. $\chi(\F)\le g-1={\rm hd}(\F)$.
 
An example of application of Theorem \ref{cast-de-franchis} is as follows. Let  $\J(D)$ be  the multiplier ideal sheaf of an effective $\mathbb Q$-divisor $D$ on an abelian variety $A$, and let $L$ be an ample line bundle  on $A$ such that $L-D$ is nef and big. By Nadel's vanishing the sheaf $\J(D)\otimes L$ satisfies the assumptions of Theorem \ref{cast-de-franchis}. Letting $Z$ the scheme of zeroes of $\J(D)$, it follows that 
 \begin{equation}\label{beppe}\chi(\J(D)\otimes L)\ge{\rm codim}\,Z
 \end{equation}
  (meaning the maximal codimension of a component of $Z$). 
 
 This  is instrumental in the proof  of a result of the author  on singularities of divisors on  simple complex abelian varieties (\cite{sing}, see also Corollary \ref{nadel}).

 %%%%% 
  
  \subsection{The case of GV sheaves}  GV sheaves are those sheaves such that their naive FMP transform $\T(\F)$ coincides, up to shift, with the FMP transform in the derived category (see (\ref{GV-sheaf})).\footnote{If this is the case the sheaf $\T(\F)$ is often denoted $\widehat{\F^\vee}$ in the literature.}  They are very special cases of naive FMP transforms since it follows from the inversion formula for the FMP equivalence that
  \begin{equation}\label{invertible} \F=\T(\T(\F))
  \end{equation}
  (see Proposition \ref{T(T)}). Therefore part (a) of the next result, on the generation and ampleness of such sheaves, follows from Theorems \ref{C} and {\ref{D}. As M regular sheaves form a subclass of GV sheaves, this recovers as a particular case the M-regularity criterion ({\bf iii}) of Subsection \ref{intro-gen}.  Item (b) is a structure result for such GV sheaves, following from the analysis of the torsion filtration of the FMP transform.   Loosely speaking, the content is that the building blocks for GV sheaves are either CGG (hence ample) sheaves, or sheaves of the form $(p^*\G)\otimes P_\alpha$, where $p: A\rightarrow B$ is a surjective homomorphism with connected kernel onto a lower dimensional abelian variety and $G$ is a CGG (hence ample) sheaf on $B$.
  %This  is as an extension of the M-regularity criterion {\bf (iii)} of Subsection \ref{intro-CGG}. In fact, by definition,  M-regular sheaves are GV sheaves such that their FMP transform $\T(\F)=R^g\Phi^A_{\cP^{-1}}(\F^\vee)=\Phi_{\cP^{-1}}^A(\F^\vee)[g]$ (recall that in this case the naive transform coincides with the transform in the derived category) is a torsion free sheaf. 
    \begin{theoremalpha}[see Theorem \ref{GV-ample}]\label{F} Let $\F$ be a GV sheaf on a $g$-dimensional abelian variety $A$, and assume  that all subsheaves of the FMP transform $\T(\F)$ have reduced support. Then:\\
  (a) $\F$ is generated and an irredundant generating set can be explicitly described;\\
  (b) $\F$ has a cofiltration whose kernels have in turn a cofiltration whose kernels are dominated either by ample sheaves or by sheaves of the form $(p^*\G)\otimes P_\alpha$ as above. 
  \end{theoremalpha}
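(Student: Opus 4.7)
The plan is to derive (a) directly from Theorem \ref{D} applied on the dual side, and to prove (b) by analyzing how the torsion filtration of $\T(\F)$ translates, via the FMP equivalence, into a cofiltration of $\F$.

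For part (a), the inversion formula (\ref{invertible}) exhibits $\F=\T(\T(\F))$ as the naive FMP transform of the coherent sheaf $\T(\F)$ on $\Pic0 A$. Applying Theorem \ref{D} with $X=\Pic0 A$, $f=\id$, and target abelian variety $\widehat{\Pic0 A}=A$, the reducedness hypothesis on subsheaves of $\T(\F)$ is exactly the one needed, and one concludes that $\F$ is generated by an explicit subset of $\Z(\T(\F))$, the set of irreducible components of supports of graded pieces of the torsion filtration of $\T(\F)$; that is the advertised irredundant generating set.

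For (b), I would start from the dimension filtration $0=T_{-1}\subset T_0\subset\cdots\subset T_g=\T(\F)$ with $T_d/T_{d-1}$ pure of dimension $d$. Since $\F$ is GV, its derived FMP transform concentrates in a single degree; the long exact sequences arising from $0\to T_{d-1}\to T_d\to T_d/T_{d-1}\to 0$ under the inverse transform therefore yield an outer cofiltration of $\F$ whose kernels are naive transforms of the pure pieces $T_d/T_{d-1}$. Each pure piece is then refined by its irreducible support components: ordering the components $W_1,\dots,W_k$ of its support and using the reducedness hypothesis, one filters $T_d/T_{d-1}$ by pure sheaves each supported on a single $W_j$. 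Applying the analogous $\T$ once more produces the inner cofiltration of each outer kernel.

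For a pure sheaf $H$ supported on a single irreducible $W_j\subset\Pic0 A$ there are two cases. If $W_j$ spans $\Pic0 A$, then by Theorem \ref{D} the corresponding inner kernel is dominated by $\T(H)$, which is generated by $\{W_j\}$ and hence ample by Theorem \ref{A} (a single spanning subvariety is automatically a strongly spanning set). If $W_j$ does not span, then $W_j\subset B'+\alpha$ for some proper abelian subvariety $B'\subsetneq\Pic0 A$; dominating $H$ by a pushforward from $B'+\alpha$ and invoking the standard compatibility $\Phi^A_{\cP^{-1}}\circ p^*\simeq \iota_*\circ\Phi^B_{\cP_B^{-1}}$ between FMP transforms under dual homomorphisms (with $p\colon A\twoheadrightarrow B:=\widehat{B'}$, adjusted to have connected kernel by passing to a finite cover if necessary) produces a surjection onto the inner kernel from a sheaf of the form $(p^*\G)\otimes P_\alpha$. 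The main obstacle will be this last identification: one must verify that the translation by $\alpha$ yields precisely the twist by $P_\alpha$, and that the reducedness hypothesis on $\T(\F)$ descends so that $\G$ itself satisfies the hypotheses of Theorem \ref{D} on $B$ and is therefore CGG (hence ample) downstairs.
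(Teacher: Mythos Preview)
Your approach coincides with the paper's. Part (a) is exactly the alternative argument the paper indicates: inversion $\F=\T(\T(\F))$ plus Theorem~\ref{D}. For (b), the paper likewise applies the contravariant functor $\T_{\Pic0 A}$ to the torsion filtration of $\T(\F)$ to get the outer cofiltration, then peels off irreducible components of each pure piece for the inner one, with the same spanning/non-spanning dichotomy.

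Two corrections. First, from the right-exact sequence $\T(T_d/T_{d-1})\to \T(T_d)\to \T(T_{d-1})\to 0$ you only get that each outer kernel is a \emph{quotient} of $\T(T_d/T_{d-1})$, not equal to it; this is precisely why the statement says ``dominated'' rather than ``equal'', and you already use this language correctly for the inner cofiltration. (Incidentally, the relevant fact is not that $\F$ is GV---the pieces $T_d$ need not be---but simply that $R^g\Phi_{\cP^{-1}}((\,\cdot\,)^\vee)$ is right exact because there is no cohomology beyond degree $g$; the GV hypothesis is used only to identify $\T(T_g)=\T(\T(\F))$ with $\F$.) Second, the finite-cover adjustment is unnecessary: if $W_j$ lies in a translate $B'+\alpha$ of a proper abelian subvariety $B'\subset\Pic0 A$, the dual homomorphism $p\colon A\to B:=\widehat{B'}$ automatically has connected kernel, and since your sheaf $H$ is already supported on $B'+\alpha$ it \emph{is} a pushforward $t_{-\alpha}^*i_*(\cdot)$, so the compatibility (\ref{compatibility}) together with the standard translation/twist identity gives $\T_{\Pic0 A}(H)\cong p^*\T_{\Pic0 B}(\cdot)\otimes P_\alpha$ on the nose, with the sheaf on $B$ ample by Theorem~\ref{D}. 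The paper packages this last step as a separate lemma.
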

  
 Concerning (a), let us recall that not all GV sheaves are generated. This is shown by the well known  example of  non-trivial \emph{unipotent} vector bundles on abelian varieties (see Example \ref{unipotent}). In fact that the FMP transform of a non-trivial unipotent vector bundle is a torsion sheaf scheme-theoretically supported at a non-reduced zero-dimensional scheme (set-theoretically supported at the origin of $\Pic0 A$). 
 
  It is worth to recall that a generation criterion for GV sheaves  was already given by Popa and the author (\emph{WIT criterion}, \cite[Theorem 4.1]{reg2}). Item (a) is a more precise version of that result. 
  In turn, item (b) can be seen as a weak analog of the \emph{Chen-Jiang decomposition} (\cite{cy}), a quite useful property satisfied by direct images of pluricanonical sheaves under morphisms to abelian varieties (see  Subsection \ref{structure}).

 %Thanks to Mihnea Popa for conversations and collaboration, a long time ago, on themes related to those of this paper. More recently, I benefited from the reading of Christian Schnell's interesting paper \cite{schnell}. 

% This paper is dedicated to Enrico Arbarello, with  admiration and gratitude.

 %%%%%%%%%

\section{Generated and ample coherent sheaves on abelian varieties}\label{gene}

In this section we provide some generalities on the notion of generation of coherent sheaves on abelian varieties introduced in Subsection \ref{intro-gen}, and prove Theorems \ref{A} and \ref{B}. 

%%%%%%%%%

\subsection{Generalities}   To begin with, we remark that, although the focus of this paper is on coherent sheaves on abelian varieties, one can extend the definition given in Subsection \ref{intro-gen} to the following setting: let $X$  be a projective variety, equipped with a morphism to an abelian variety
\[ f:X\rightarrow A. 
\]
\begin{definition}\label{def:irreg} Let  $\Z=\{Z_i\}_{i\in I}$ be a finite set of irreducible subvarieties of $\Pic0 A$, such that no subvariety belonging to $\Z$ is contained in another. A coherent sheaf $\F$ on $X$ is said to be \emph{generated by $\Z$ at  a given point $x\in X$} (with respect to te morphism $f$)  if the map
\[
{\rm ev}_U(x): \bigoplus_{\alpha\in U}H^0(\F\otimes f^* P_\alpha)\otimes f^* P_{\alpha}^{-1}\rightarrow \F_{|x}
\]
is surjective for all  open sets $U$ such that $U$ meets $\Z_i$ for all $i$. If this happens for all $x\in X$, namely the map
\[
{\rm ev}_U: \bigoplus_{\alpha\in U}H^0(\F\otimes f^* P_\alpha)\otimes f^* P_{\alpha}^{-1}\rightarrow \F
\] 
is surjective,  the sheaf $\F$ is said to be  generated by the set $\Z$ (with respect to the morphism $f$). If $\Z=\{\Pic0 A\}$ then $\F$ is said to be \emph{continuously globally generated} (CGG). 
Moreover  $\F$ is said to be \emph{generated} (with respect to the morphism $f$) if there is some set of subvarieties $\mathcal Z$ generating $\F$  (with respect to the morphism $f$). This is equivalent to the surjectivity of the map \begin{equation}\label{totalsum}
 {\rm ev}_{\Pic0 A}:\bigoplus_{\alpha\in\Pic0 A}H^0(\F\otimes f^*P_\alpha)\otimes f^*P_{\alpha}^{-1}\rightarrow \F.
 \end{equation}
\end{definition}

\begin{remark}\label{irredundant}    A finite set of subvarieties $\mathcal Y=\{Y_j\}$ as above is said to be \emph{covered} by  another  $\mathcal Z=\{ Z_i\}$ if for all $i$ there is a $j$ such that $Y_j$ is contained in $Z_i$. If a sheaf $\F$ is generated by a set $\mathcal Z$ then, trivially, it is generated by any set covered by $\mathcal Z$, and, as it will be clear in a moment, it turns out to be useful to identify a maximal (with respect to the relation of being covered) set doing the job. Such a set of subvariety will be called an \emph{irredundant generating set for $\F$.}
\end{remark}

\begin{remark}\label{paraculo}  In practice  in some arguments it may happen to find generating sets $\Z=\{Z_i\}$ such that  $Z_j$ is contained in $Z_k$ for some $j$ and $k$. However this does not cause any problem because, if this is the case, to be generated by the set $\Z$ is the same thing of being  generated by the set $\mathcal Z\smallsetminus \{Z_k\}$. Hence, by taking the subset of minimal subvarieties belonging to set $\Z$, one can always reduce to the assumption of Definition \ref{def:irreg}. 
\end{remark}

\begin{remark}\label{finite sum} By noetherianity and quasi-compactness, Definition \ref{def:irreg} can be equivalently formulated replacing the map ${\rm ev}_U$ of the Definition \ref{def:irreg} with the sum of finite number of evaluation maps. The required condition is the existence of positive integer $N_0$ and a collection of positive integers $\{N_i\}_{i\in I}$ such that the sum of twisted evaluation maps 
\[\bigoplus_{i\in I\cup\{0\}, 1\le j\le N_i}H^0(\F\otimes f^* P_{\alpha_{i,j}})\otimes f^* P_{\alpha_{i,j}}^{-1}\rightarrow \F
\]
is surjective for all sufficiently general $(\alpha_{0,1},\dots \alpha_{0,N_0})\in (\Pic0 A)^{N_0}$ and $(\alpha_{i,1},\dots, \alpha_{i,N_i})\in (Z_i)^{N_i}$ for all $i\in I$. Therefore also in the sum (\ref{totalsum}) one can take a finite number of summands.  In particular, the notion of\emph{ generated} coherent sheaf introduced here coincides with the the notion of \emph{algebraically generated} coherent sheaf introduced in the recent paper \cite[Definition 3.2]{lin-yu}. 

It follows that a coherent generated (with respect to any morphism) sheaf is  nef, as it is the quotient of the direct sum of numerically trivial line bundles. 
\end{remark}

 %%%%%%%
 
 \subsection{Theorem \ref{A}}
The main point is in the following easy lemma.
  
  \begin{lemma}\label{lem:sum} In the setting of Definition \ref{def:irreg}, let $\mathcal Z=\{Z_i\}_{i=1}^n$ and $\mathcal Y=\{Y_j\}_{j=1}^m$ be two finite sets of subvarieties of $\Pic0 A$. Let moreover $\F$ and $\G$ be coherent sheaves on $X$ respectively generated by $\mathcal Z$ and $\mathcal Y$ (with respect to the morphism $f$). Then $\F\otimes \G$ is generated by the set of subvarieties \emph{(see Remark \ref{paraculo})}:
  \[\mathcal{Z}+\mathcal{Y}:=\{Z_i+Y_j\}_{(i,j)=(1,1)}^{(n,m)}.
  \]
  \end{lemma}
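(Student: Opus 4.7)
Fix a closed point $x\in X$ and a Zariski open $W\subset\Pic0 A$ meeting each $Z_i+Y_j$; we must show that the evaluation map $\ev_W(x)$ is surjective onto $(\F\otimes\G)_{|x}$. As pure tensors $a\otimes b$ (with $a\in\F_{|x}$, $b\in\G_{|x}$) span $(\F\otimes\G)_{|x}$, it suffices to exhibit each such $a\otimes b$ in the image. The idea is to find an open $U\subset\Pic0 A$ meeting each $Z_i$ such that, for every $\alpha\in U$, the translated open $W-\alpha$ still meets each $Y_j$; we then use the generation of $\F$ to write $a$ via sections indexed by $\alpha_k\in U$, and the generation of $\G$ to write $b$ via sections indexed by $\beta_\ell\in\bigcap_k(W-\alpha_k)$, which forces every sum $\alpha_k+\beta_\ell$ to lie in $W$.

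\textbf{The auxiliary open set.} Set
\[
U\;:=\;\bigcap_{j=1}^{m}\bigl(W-Y_j\bigr)\;\subset\;\Pic0 A,
\]
where $W-Y_j=\bigcup_{\beta\in Y_j}(W-\beta)$ is open as a union of translates of $W$. I claim $U\cap Z_i\neq\emptyset$ for every $i$. Indeed, $(W-Y_j)\cap Z_i$ is the projection to $Z_i$ of the open subset $s^{-1}(W)\cap(Z_i\times Y_j)$ of $Z_i\times Y_j$, where $s:A\times A\to A$ is the addition map; this preimage is non-empty exactly because, by hypothesis, $W$ meets $Z_i+Y_j$. By irreducibility of $Z_i$, each $(W-Y_j)\cap Z_i$ is a dense open subset of $Z_i$, so their finite intersection $U\cap Z_i$ is itself dense, hence non-empty. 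Since $\F$ is generated by $\mathcal{Z}$, the map $\ev_U(x)$ is surjective and we obtain a finite expression
\[
a\;=\;\sum_{k=1}^{K} s_k(x)\otimes c_k,\qquad s_k\in H^0(\F\otimes f^*P_{\alpha_k}),\;\;c_k\in(f^*P_{\alpha_k}^{-1})_{|x},\;\;\alpha_k\in U.
\]

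\textbf{Complementary choice and conclusion.} With the $\alpha_k$'s now fixed, set $V:=\bigcap_{k=1}^{K}(W-\alpha_k)$, which is open. Since each $\alpha_k\in U\subset W-Y_j$, there is some $\beta\in Y_j$ with $\alpha_k+\beta\in W$, i.e.\ $\beta\in(W-\alpha_k)\cap Y_j$; so $(W-\alpha_k)\cap Y_j$ is a non-empty, hence dense, open subset of the irreducible $Y_j$, and $V\cap Y_j$ is non-empty. Applying generation of $\G$ by $\mathcal{Y}$ to the open $V$ gives
\[
b\;=\;\sum_{\ell=1}^{L} t_\ell(x)\otimes d_\ell,\qquad t_\ell\in H^0(\G\otimes f^*P_{\beta_\ell}),\;\;\beta_\ell\in V.
\]
Using $f^*P_{\alpha_k}\otimes f^*P_{\beta_\ell}\cong f^*P_{\alpha_k+\beta_\ell}$, each $s_k\otimes t_\ell$ lies in $H^0(\F\otimes\G\otimes f^*P_{\alpha_k+\beta_\ell})$, and $\alpha_k+\beta_\ell\in W$ by construction of $V$. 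Therefore
\[
a\otimes b\;=\;\sum_{k,\ell}(s_k\otimes t_\ell)(x)\otimes(c_k\otimes d_\ell)
\]
lies in the image of $\ev_W(x)$, which is the required surjectivity (after discarding any redundant $Z_i+Y_j$ as allowed by Remark \ref{paraculo}).

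\textbf{Main obstacle.} The only real subtlety is the simultaneous compatibility of the two generation steps: once the $\alpha_k$'s are fixed one has no further flexibility on them, so they must already be placed in each $Z_i$ in such a way that the translated opens $W-\alpha_k$ still meet every $Y_j$. Defining $U$ as $\bigcap_j(W-Y_j)$ encodes exactly this compatibility, and the hypothesis that $W$ meets each $Z_i+Y_j$ is precisely what guarantees $U\cap Z_i\neq\emptyset$. The irreducibility of each $Z_i$ and $Y_j$ is what permits finite intersections of non-empty open subsets to remain non-empty.
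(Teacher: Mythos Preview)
Your proof is correct and follows the same strategy as the paper: multiply twisted sections of $\F$ and $\G$ to obtain twisted sections of $\F\otimes\G$ with parameter $\alpha+\beta$, and check that these parameters can be arranged to lie in the given open set $W$. Your two-step construction (first $U=\bigcap_j(W-Y_j)$, then $V=\bigcap_k(W-\alpha_k)$ after fixing finitely many $\alpha_k$) is in fact more careful than the paper's concluding sentence, which asserts that any such $W$ contains a subset of the form $U+V$ with $U,V$ open meeting all $Z_i$, $Y_j$ respectively; that assertion is not literally true (take $\mathcal Z=\mathcal Y=\{\Pic0 A\}$ and $W$ the complement of a point, so that any non-empty open $U,V$ satisfy $U+V=\Pic0 A$), and your pointwise, finitary argument is exactly the right fix.
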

  Here $Z_i+Y_j$ denotes, as usual, the image of $Z_i\times Y_j$ via the group law of $\Pic0 A$.
  
  \begin{proof} For open subsets of $\Pic0 A$, say $U$ and $V$, respectively meeting all the $Z_i$'s and  all the $Y_i$'s we have the surjective map 
\[
\begin{matrix}\bigl(\bigoplus_{\alpha\in U}H^0(A,\F\otimes f^*P_\alpha\bigr)\otimes f^*P_\alpha^{-1})\otimes\bigl( \bigoplus_{\beta\in V}H^0(A,\G\otimes f^*P_\beta)\otimes f^*P_\beta^{-1}\bigr)&\longrightarrow&\F\otimes \G\\
\Vert\\
\>\>\>\> \>\>\>\>\>\>\>\bigoplus_{(\alpha,\beta)\in U\times V}H^0(\F\otimes f^*P_\alpha)\otimes H^0(\G\otimes f^*P_\beta)\otimes f^*P^{-1}_\alpha\otimes f^*P^{-1}_\beta
\end{matrix}.\]
 This map factors through the map
\[\bigoplus_{\gamma\in U+V}H^0(\F\otimes \G\otimes f^*P_\gamma)\otimes f^*P_\gamma^{-1}\rightarrow \F\otimes \G\]
which is, therefore, surjective. Finally, any open subset of $\Pic0 A$ meeting  all the subvarieties $X_i+ Y_j$, for $i=1,\dots ,n$ and $j=1,\dots ,m$, contains some open subset of the form $U+V$, with $U$ and $V$ as  above.
  \end{proof}

  The following result is Theorem \ref{A} of the Introduction, in a slightly more general form. It is an extension of the above quoted result of Debarre asserting that a CGG sheaf with respect to a \emph{finite} onto its image morphism to an abelian variety is ample.

  \begin{theorem}\label{ample-criterion} In the setting of Definition \ref{def:irreg} let us assume that the morphism $f:X\rightarrow A$ is finite onto its image. Let $\F$ be a coherent sheaf on $X$ such that $\F$ is generated by a finite set of subvarieties $\Z=\{Z_i\}_{i\in I}$ strongly spanning  $\Pic0 A$ \emph{(see Subsection \ref{intro-gen})}. Then  $\F$ is ample.
   \end{theorem}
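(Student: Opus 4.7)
The idea is to use Lemma \ref{lem:sum} iteratively in order to reduce the strong spanning hypothesis to ordinary continuous global generation of a suitable tensor power of $\F$, and then to invoke the Debarre-type ampleness criterion recalled in item (i) of Subsection \ref{intro-CGG}.

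I would first iterate Lemma \ref{lem:sum}: for every positive integer $N$, the sheaf $\F^{\otimes N}$ is generated, with respect to $f$, by the finite set of subvarieties
\[
N\cdot\Z \;:=\; \{\, Z_{i_1}+\cdots + Z_{i_N} \mid (i_1,\dots,i_N)\in I^N\,\}\;\subseteq\; \Pic0 A.
\]
Next, I would exploit strong spanning: for each $i\in I$ there is an integer $N_i$ with $Z_i+\cdots + Z_i=\Pic0 A$ (sum of $N_i$ copies). Enlarging the number of summands can only enlarge the sum, so setting $N:=\max_i N_i$ gives $Z_i+\cdots + Z_i=\Pic0 A$ for every $i\in I$. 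Therefore $\Pic0 A$ itself belongs to $N\cdot\Z$, while every other element of $N\cdot\Z$ is contained in it; by Remark \ref{paraculo}, discarding the elements contained in a strictly larger one reduces the generating set to the singleton $\{\Pic0 A\}$. Equivalently, $\F^{\otimes N}$ is CGG with respect to $f$.

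To conclude, since $f:X\to A$ is finite onto its image, the extension of Debarre's criterion quoted in item (i) of Subsection \ref{intro-CGG} applies to $\F^{\otimes N}$, showing that $\F^{\otimes N}$ is ample in the sense of Kubota. I would then close the argument by appealing to the standard fact that a coherent sheaf on a projective scheme is ample if and only if some positive tensor power is ample.

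The main obstacle I anticipate is precisely the final passage from ampleness of $\F^{\otimes N}$ to ampleness of $\F$ itself: for locally free sheaves this is classical, but for an arbitrary coherent sheaf in Kubota's formalism it requires a careful check (via the projectivization of the symmetric algebra), rather than a direct invocation of the vector bundle statement. Everything else amounts to bookkeeping with the additive operation on subvarieties of $\Pic0 A$ combined with Lemma \ref{lem:sum}.
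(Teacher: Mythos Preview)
Your overall strategy coincides with the paper's: iterate Lemma~\ref{lem:sum}, reach CGG for a tensor power, then invoke Debarre. However there is a genuine error in the step where you reduce the generating set $N\cdot\Z$ to $\{\Pic0 A\}$. You have read Remark~\ref{paraculo} backwards: if $Z_j\subset Z_k$ then the redundant member is the \emph{larger} one $Z_k$, not the smaller one. Indeed, an open set meeting $Z_j$ automatically meets $Z_k$, so the constraint coming from $Z_k$ is vacuous and one keeps the \emph{minimal} subvarieties. Hence the mere fact that $\Pic0 A$ occurs in $N\cdot\Z$ is worthless; what you must show is that \emph{every} $N$-fold sum $Z_{i_1}+\cdots+Z_{i_N}$ equals $\Pic0 A$. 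Your choice $N=\max_i N_i$ guarantees this only for the ``pure'' tuples $(i,\dots,i)$, not for mixed ones.

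The fix is the one the paper uses (stated there without elaboration): enlarge $N$ so that the pigeonhole principle applies. Taking for instance $M=|I|\cdot\max_i N_i$, in any tuple $(i_1,\dots,i_M)\in I^M$ some index $i$ occurs at least $N_i$ times, whence the corresponding sum contains $N_i\cdot Z_i=\Pic0 A$ as a sub-sum and therefore equals $\Pic0 A$. Now $M\cdot\Z=\{\Pic0 A\}$ honestly, and $\F^{\otimes M}$ is CGG.

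On your final concern: the paper sidesteps the passage ``$\F^{\otimes M}$ ample $\Rightarrow$ $\F$ ample'' by observing that the quotient $S^M\F$ of $\F^{\otimes M}$ is also CGG, hence ample by Debarre, and then citing Kubota's result \cite[Proposition~2]{kubota} that $S^M\F$ ample implies $\F$ ample. This is cleaner than working directly with $\F^{\otimes M}$ and resolves exactly the obstacle you anticipated.
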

   \begin{proof} To begin with, we note that the fact that the set
    $\Z=\{Z_i\}_{i\in I}$ strongly spans  $\Pic0 A$ implies that there is a positive integer $M$ such that $Z_{i_1}+\dots +Z_{i_M}=\Pic0 A$ for all $(i_1,\dots ,i_M)\in I^M$. 
   By  Lemma \ref{lem:sum} we know that the sheaf $\F^{\otimes M}$ is generated by the set $\{Z_{i_1}+\dots +Z_{i_M}\}_{(i_1,\dots, i_M)\in I^M}$. Hence $\F^{\otimes M}$ is CGG and therefore the same holds true for the symmetric power $S^M\F$. Therefore, by Debarre's theorem, $S^M\F$ is ample, hence $\F$ is ample (\cite[Proposition 2]{kubota}, see also \cite[Theorem 6.1.15]{laz2}).
   \end{proof}
  As a particular case, a coherent sheaf on an abelian variety which is generated by  a single irreducible subvariety $Z$ spanning $\Pic0 A$ is ample.  Obviously this is in general false if $\F$ is generated by  the set of components of a reducible subvariety spanning $\Pic0 A$, but some individual component do not (e.g. on a product of abelian varieties $A=A_1\times A_2$, let $\F= p_1^*\F_1\oplus p_2^*\F_2$, with $\F_i$  CGG (hence ample) sheaves on $A_i$ for $i=1,2$. The sheaf $\F$ is not ample, but is generated by the set $\{\Pic0 A_1\times\{\hat 0\},\{\hat 0\}\times\Pic0 A_2\}$.)

%%%%%%%

\subsection{Theorem \ref{B}} As mentioned in Subsection \ref{intro-CGG} (item {\bf (ii)}), the CGG condition provides a useful criterion for global generation. The natural extension of this is the following\footnote{also this result is an extension of a result of Popa and the author in the context of the above mentioned notion of weak global generation (\cite[Proposition 2.4(c)]{reg2})}

  \begin{proposition}\label{base-locus} In the setting of Definition \ref{def:irreg}, let $\F$ be a coherent sheaf on $X$, generated  (with respect to the morphism $f$) by a set  $\Z=\{Z_i\}$. Let $Y$ be a subvariety of $X$ and let $L$ be a CGG (with respect to $f$)  line bundle on $Y$. Then 
  \[B(\F\otimes L)\subset\bigcup_i\>\Bigl(\,\bigcap_{\alpha\in Z_i}B(L\otimes P_{\alpha}^{-1})\Bigr).
  \]
  \end{proposition}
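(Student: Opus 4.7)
I would adapt the multiplication-of-sections trick used to prove item \textbf{(ii)} of Subsection \ref{intro-CGG} (the proof of \cite[Proposition 2.12]{reg1}): for $s \in H^0(X, \F \otimes f^*P_\alpha)$ and $t \in H^0(Y, L \otimes f^*P_\alpha^{-1})$, the product $s|_Y \cdot t$ is a section of $\F|_Y \otimes L$, with value at $y \in Y$ the image of $s(y) \otimes t(y)$ under the canonical contraction
\[ \F_y \otimes (f^*P_\alpha)_y \otimes L_y \otimes (f^*P_\alpha^{-1})_y \longrightarrow \F_y \otimes L_y = (\F \otimes L)_y; \]
when $t(y) \neq 0$, multiplication by $t(y)$ is a $k$-linear isomorphism $\F_y \otimes (f^*P_\alpha)_y \xrightarrow{\sim} \F_y \otimes L_y$.

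Fix $y \in Y$ lying outside the right-hand side, so that for every $i \in I$ there is some $\alpha_i \in Z_i$ with $y \notin B(L \otimes f^*P_{\alpha_i}^{-1})$. Set
\[ \tilde G := \{\alpha \in \Pic0 A : y \notin B(L \otimes f^*P_\alpha^{-1})\}; \]
then $\tilde G \cap Z_i \neq \emptyset$ for every $i$. I would next produce a Zariski open subset $U$ of $\Pic0 A$, contained in $\tilde G$ and meeting each $Z_i$. This reduces to a relative base-locus analysis of the family $\mathcal{L}_Y := p_Y^*L \otimes (f \times \id)^*\cP^{-1}$ on $Y \times \Pic0 A$, using that the locus in $\Pic0 A$ where the evaluation-at-$y$ morphism $\pi_*\mathcal{L}_Y \to \mathcal{N}$ (here $\pi$ is the projection to $\Pic0 A$ and $\mathcal{N}$ is the line bundle obtained by restricting $\mathcal{L}_Y$ to $\{y\} \times \Pic0 A$) is surjective is open, combined with cohomology-and-base-change and the density of $\tilde G$ in $\Pic0 A$ coming from $L$ being CGG.

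Granting such $U$, the generation hypothesis on $\F$ yields the surjectivity of
\[ \ev_U(y): \bigoplus_{\alpha \in U} H^0(X, \F \otimes f^*P_\alpha) \otimes (f^*P_\alpha^{-1})_y \longrightarrow \F_y. \]
For each $\alpha \in U$ pick $t_\alpha \in H^0(Y, L \otimes f^*P_\alpha^{-1})$ with $t_\alpha(y) \neq 0$. By the first paragraph and the $1$-dimensionality of $L_y$, the image of the restricted multiplication-and-evaluation map
\[ \bigoplus_{\alpha \in U} H^0(X, \F \otimes f^*P_\alpha) \otimes k \cdot t_\alpha \longrightarrow (\F \otimes L)_y \]
is canonically identified with $L_y \otimes \mathrm{Im}(\ev_U(y)) = L_y \otimes \F_y = (\F \otimes L)_y$, so $y \notin B(\F \otimes L)$. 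The main obstacle is producing the open set $U \subset \tilde G$ meeting all $Z_i$; once this step is in hand, the rest is a routine linear-algebraic application of the multiplication trick.
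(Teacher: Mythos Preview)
Your proposal is correct and follows essentially the same route as the paper's proof: both reduce to producing an open subset $U_y(L)\subseteq\tilde G$ meeting every $Z_i$ (the paper asserts this in one line from the CGG hypothesis on $L$, while you unpack the relative base-locus and cohomology-and-base-change mechanism behind it) and then conclude via the multiplication-of-sections commutative square. Apart from this difference in level of detail at the step you flag as the ``main obstacle,'' the two arguments coincide.
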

  \begin{proof} Note that for a line bundle $L$, to be CGG means that the intersection of the base loci $\bigcap_{\alpha\in V}B(L\otimes P_\alpha)$ is empty for all (non-empty) open subsets $V\subseteq  \Pic0 A$. Let $y\in Y$.  Since $L$ is CGG the subset $\{\alpha\in\Pic0 A\>|\> y\not\in B(L\otimes P_\alpha^{-1})\}$ contains an open subset $U_y(L)\subseteq \Pic0 A$. Assume that $y\not\in \bigcup_i(\bigcap_{\alpha\in Z_i}B(L\otimes  P_\alpha^{-1}))$. Then the open set $U_y(L)$ meets all irreducible subvarieties $Z_i$. Therefore, given another open set $U\subset \Pic0 A$ meeting all irreducible subvarieties  $Z_i$, also the open subset $U\cap U_y(L)$ meets all subvarieties $Z_i$. Therefore, in the commutative diagram
  \[\xymatrix{
  \bigoplus_{\alpha\in U\cap U_y(L)}H^0(\F\otimes P_\alpha)\otimes H^0(L\otimes P_\alpha^{-1})\ar[r]\ar[d]
  &
  H^0(\F\otimes L)\ar[d]\\
  \bigoplus_{\alpha\in U\cap U_y(L)}H^0(\F\otimes P_\alpha)\otimes (L\otimes P_\alpha^{-1})_{|y}\ar[r]
  &
  (\F\otimes L)_{|y}
  }
  \]
 both the left  arrow and the bottom arrow are surjective. Hence the right arrow is surjective.
   \end{proof}
  
  %%%%%%

  \section{Relationship with the FMP transform}\label{FM}
  
 \subsection{The basic relation} The relevance of the FMP functor  in the study of the generation of coherent sheaves on abelian varieties stems from the following  relation between the evaluation maps of a coherent sheaf on $A$  and of its naive FMP transform on $\Pic0 A$. This is known to the experts (see  Schnell's paper \cite{schnell}, proof of Proposition 4.1). In what follows we will keep the setting and notation of the Introduction, Subsection \ref{relation}. 
  
  Given an open subset $U$ of $\Pic0 A$, the continuous evaluation map at $x\in A$
\begin{equation}\label{twisted}
{\rm ev}_U(x):\bigoplus_{\alpha\in U}H^0(\F\otimes P_\alpha)\otimes P_\alpha^{-1}\rightarrow \F_{|x}
\end{equation}
 factors through the map
\begin{equation}\label{twisted2}
\bigoplus_{\alpha\in U}H^0(\F\otimes P_\alpha)\otimes (P_\alpha^{-1})_{|x}\rightarrow \F_{|x}
\end{equation}
and (\ref{twisted}) is surjective if and only if (\ref{twisted2}) is. 

In order to render more transparent some steps of the argument, we will occasionally skip the notation $\F_{|x}:=\F\otimes k(x)$. Let us consider the individual maps of (\ref{twisted2}), i.e.
\begin{equation}\label{twisted2-ind}
H^0(\F\otimes P_\alpha)\otimes P_\alpha^{-1}\otimes k(x)\rightarrow \F\otimes k(x).
\end{equation}
Let us recall that, via the canonical isomorphism between the abelian variety $A$ and $\Pic0 (\Pic0 A)$, a point $x\in A$ corresponds to the line bundle  on $\Pic0 A$:
\[
P_x:=\cP_{|\{x\}\times\Pic0 A}.
\]

The next Proposition describes the image of the dual map of (\ref{twisted2-ind}) via the (contravariant) equivalence
\[F: D(A)\rightarrow D(\Pic0 A), \qquad F(\>\cdot\>)=\Phi^A_{\cP^{-1}}((\>\cdot\>)^\vee)
\]

\begin{proposition}\label{identi1} The functor $F$ identifies the Serre-Grothendieck dual of the linear map (\ref{twisted2-ind}) to the linear  map
\[H^g(\Phi^A_{\cP^{-1}}(\F^\vee)\otimes P_x)\rightarrow (R^g\Phi^A_{\cP^{-1}}(\F^{\vee})\otimes  P_x)\otimes k(\alpha)
\]
factoring as follows
\begin{equation}\label{factorization}
\xymatrix{
H^g(\Phi_{\cP^{-1}}(\F^\vee)\otimes P_x)\ar[r]^{\mathrm{ed}_x}\ar[rd]&H^0(R^g\Phi_{\cP^{-1}}(\F^\vee)\otimes P_x)\ar[d]^{\mathrm{ev}_x(\alpha)}\\ 
& (R^g\Phi_{\cP^{-1}}(\F^{\vee})\otimes  P_x)\otimes k(\alpha)}
\end{equation}
where $ev_x(\alpha)$ is the evaluation at the point $\alpha\in\widehat A$ of the coherent sheaf $R^g\Phi_{\cP^{-1}}(\F^\vee)\otimes P_x$. 
\end{proposition}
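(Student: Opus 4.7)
The plan is to apply the functor $F=\Phi^A_{\cP^{-1}}\circ(-)^\vee$ to the \emph{global} evaluation map $\mathrm{ev}_\alpha:H^0(A,\F\otimes P_\alpha)\otimes P_\alpha^{-1}\to\F$ (whose tensor with $k(x)$ is (\ref{twisted2-ind})), tensor the result with $P_x$, take $H^g(\Pic0 A,-)$, and track the resulting map through two standard Mukai identifications for the Poincar\'e bundle.

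Dualizing gives $\F^\vee\to H^0(\F\otimes P_\alpha)^\vee\otimes P_\alpha$; applying $\Phi^A_{\cP^{-1}}$ together with the Mukai identity $\Phi^A_{\cP^{-1}}(P_\alpha)=k(\alpha)[-g]$ (a consequence of base change and the vanishing of cohomology of nontrivial topologically trivial line bundles on $A$) yields
\[
\Phi^A_{\cP^{-1}}(\F^\vee)\longrightarrow H^0(\F\otimes P_\alpha)^\vee\otimes k(\alpha)[-g].
\]
Since $\Phi^A_{\cP^{-1}}(\F^\vee)$ has cohomology in degrees $[0,g]$ (as $R^i\Phi^A_{\cP^{-1}}(\F^\vee)=0$ for $i>g$) and the target is concentrated in degree $g$, this map factors canonically through the top truncation $\Phi^A_{\cP^{-1}}(\F^\vee)\to R^g\Phi^A_{\cP^{-1}}(\F^\vee)[-g]$, producing, under the base-change identification $R^g\Phi^A_{\cP^{-1}}(\F^\vee)|_\alpha=H^0(\F\otimes P_\alpha)^\vee$, precisely the stalk evaluation at $\alpha$ of $R^g\Phi^A_{\cP^{-1}}(\F^\vee)$.

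Now I would tensor with $P_x$ and apply $H^g(\Pic0 A,-)$. The truncation factorization, combined with the hypercohomology spectral sequence $H^p(\Pic0 A,R^q\Phi^A_{\cP^{-1}}(\F^\vee)\otimes P_x)\Rightarrow\mathbb H^{p+q}(\Phi^A_{\cP^{-1}}(\F^\vee)\otimes P_x)$, exhibits the resulting linear map as the edge map $\mathrm{ed}_x$ followed by the stalk evaluation $\mathrm{ev}_x(\alpha)$, producing diagram (\ref{factorization}). Taking $H^g$ of the target yields $H^0(\F\otimes P_\alpha)^\vee\otimes P_x|_\alpha$, which via $P_x|_\alpha=P_\alpha|_x$ is the codomain of the Serre-Grothendieck dual of (\ref{twisted2-ind}).

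Finally, the source $H^g(\Pic0 A,\Phi^A_{\cP^{-1}}(\F^\vee)\otimes P_x)$ is identified with $(\F|_x)^\vee$ via the projection formula and the companion Mukai identity $\mathbf Rp_*(\cP^{-1}\otimes q^*P_x)=\Phi^{\Pic0 A}_{\cP^{-1}}(P_x)=k(x)[-g]$ (which follows from $\cP^{-1}=((-1_A)\times\mathrm{id})^*\cP$ and the classical $\Phi^{\Pic0 A}_\cP(P_x)=k(-x)[-g]$):
\[
H^g(\Pic0 A,\Phi^A_{\cP^{-1}}(\F^\vee)\otimes P_x)=H^g(A,\F^\vee\otimes k(x)[-g])=\mathrm{Hom}_{\OO_A}(\F,k(x))=(\F|_x)^\vee.
\]
Naturality in $\F$ of every identification used (dualization, both Mukai formulas, projection formula, truncation) then forces the composition to coincide with the pointwise dual of (\ref{twisted2-ind}). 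This last compatibility check---ensuring that the two $[-g]$ shifts and the twists by $P_x$ and $P_\alpha$ cancel in the correct way to reproduce the $k$-linear dual---is the main obstacle, but it is essentially the bookkeeping carried out in Schnell's proof of \cite[Proposition 4.1]{schnell}.
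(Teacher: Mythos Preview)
Your argument is correct and is essentially the same as the paper's, with only a cosmetic difference in packaging: the paper first specializes to $x$ (writing the dual of (\ref{twisted2-ind}) as $\mathrm{Ext}^g(k(x),\F^\vee)\to\mathrm{Hom}(H^0(P_\alpha^{-1}\otimes k(x)),H^g(\F^\vee\otimes P_\alpha^{-1}))$) and then invokes the equivalence $\Phi^A_{\cP^{-1}}$ on Hom-spaces, whereas you keep the evaluation map global, apply $\Phi^A_{\cP^{-1}}\circ(-)^\vee$ to it directly, and only afterwards tensor with $P_x$ and take $H^g$. Both routes rest on the same two Mukai identities $\Phi^A_{\cP^{-1}}(P_\alpha)=k(\alpha)[-g]$ and $\Phi^{\Pic0 A}_{\cP^{-1}}(P_x)=k(x)[-g]$ and the same truncation producing the edge map, so the content is identical.
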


\begin{proof}The proof is straightforward. We identify the map (\ref{twisted2-ind}) to
\begin{equation}\label{twisted3}
H^0(\F\otimes P_\alpha)\otimes H^0(P_\alpha^{-1}\otimes k(x))\rightarrow  H^0(\F\otimes k(x)).
\end{equation}
Applying Serre-Grothedieck duality, we write  the dual map of (\ref{twisted3}) as
\begin{equation}\label{twisted4}\mathrm{Hom}_{D(A)}(k(x),\F^\vee[g])=\mathrm{Ext}^g(k(x),\F^\vee)\rightarrow \mathrm{Hom}(H^0(P_\alpha^{-1}\otimes k(x)), H^g(\F^\vee\otimes P_\alpha^{-1})).
\end{equation}
Applying the functor $\Phi_{\cP^{-1}}^A$ to the the source of (\ref{twisted4}),   we have the chain of isomorphisms
\begin{equation}\label{chain}\begin{aligned}\mathrm{Hom}_{D(A)}(k(x),\F^\vee[g])&\cong \mathrm{Hom}_{D(\Pic0 A)}(P_x^{-1}, \Phi^A_{\cP^{-1}}(\F^\vee)[g])\cong \\
 \cong\mathrm{Ext}^g(P_x^{-1}, \Phi^A_{\cP^{-1}}(\F^\vee))&\cong H^g(\Phi^A_{\cP^{-1}}(\F^\vee)\otimes P_x).
\end{aligned}
\end{equation}
Concerning the target of the map (\ref{twisted4}), there are the canonical identifications
\[H^0(P_\alpha^{-1}\otimes k(x))\cong R^0\Phi^A_{\cP^{-1}}\bigl(k(x)\bigr)\otimes k(\alpha)=\Phi^A_{\cP^{-1}}(k(x))\otimes k(\alpha)=P_x^{-1}\otimes k(\alpha)
\]
and 
\[H^g(\F^\vee\otimes P_\alpha^{-1})\cong \bigl(R^g\Phi^A_{\cP^{-1}}(\F^{\vee})\bigr)\otimes k(\alpha).
\]

We conclude that the map (\ref{twisted4}) is identified, via the functor $\Phi^A_{\cP^{-1}}$, to a linear map
\[H^g(\Phi^A_{\cP^{-1}}(\F^\vee)\otimes P_x)\rightarrow \bigl(R^g\Phi^A_{\cP^{-1}}(\F^{\vee})\otimes  P_x\bigr)\otimes k(\alpha).
\]
factorizing trough the evaluation map of the sheaf  $R^g\Phi^A_{\cP^{-1}}(\F^{\vee})\otimes  P_x$ at the point $\alpha$, as stated in (\ref{factorization}). \end{proof}

\begin{remark} The map
 \begin{equation}\label{edge} 
 {\mathrm{ed}_x}:{H}^g(\Phi_{\cP^{-1}}^A(\F^\vee)\otimes P_x)\rightarrow H^0(R^g\Phi_{\cP^{-1}}^A(\F^\vee)\otimes P_x).
 \end{equation}
 appearing in (\ref{factorization}) 
 is in fact the edge map in  the hypercohomology spectral sequence
 \[H^p(R^q\Phi^A_{\cP^{-1}}(\F^\vee)\otimes P_x)\Longrightarrow {H}^{p+q}(\Phi^A_{\cP^{-1}}(\F^\vee)\otimes P_x).
\]
\end{remark}

In the next section we will use also the following version of Proposition \ref{identi1}, proved exactly in the same way. This time, for $x\in A$ and $\alpha\in \Pic0 A$, we consider the map
\begin{equation}\label{twisted-inv}
H^0(\F\otimes P_\alpha)\otimes  k(x)\rightarrow \F\otimes P_\alpha\otimes k(x).
\end{equation}

\begin{proposition}\label{identi11} The functor $F$ identifies the Serre-Grothendieck dual of the linear map (\ref{twisted-inv}) to the linear  map
\[H^g(\Phi^A_{\cP^{-1}}(\F^\vee)\otimes P_x)\otimes P_x^{-1}\otimes k(\alpha)\rightarrow (R^g\Phi^A_{\cP^{-1}}(\F^{\vee}))\otimes k(\alpha)
\]
factoring as follows
\begin{equation}\label{factorization-bis}
\xymatrix{
H^g(\Phi_{\cP^{-1}}(\F^\vee)\otimes P_x)\otimes P_x^{-1}\otimes k(\alpha)\ar[r]\ar[rd]&H^0(R^g\Phi_{\cP^{-1}}(\F^\vee)\otimes P_x)\otimes P_x^{-1}\otimes k(\alpha)\ar[d]\\ 
& (R^g\Phi_{\cP^{-1}}(\F^{\vee}))\otimes k(\alpha).}
\end{equation}
\end{proposition}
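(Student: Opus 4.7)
My plan is to deduce Proposition \ref{identi11} from Proposition \ref{identi1} by means of a one-dimensional twist, rather than redoing the full Serre--Grothendieck duality computation. The first observation is that the map (\ref{twisted-inv}) is just the map (\ref{twisted2-ind}) tensored over $k$ with the one-dimensional $k$-vector space $P_\alpha\otimes k(x)$ (the fibre of $P_\alpha$ at $x$). Indeed, tensoring
\[H^0(\F\otimes P_\alpha)\otimes P_\alpha^{-1}\otimes k(x)\rightarrow \F\otimes k(x)\]
with $P_\alpha\otimes k(x)$ cancels $P_\alpha^{-1}\otimes P_\alpha\cong\OO$ on the source side and multiplies the target by $P_\alpha$, producing exactly $H^0(\F\otimes P_\alpha)\otimes k(x)\rightarrow \F\otimes P_\alpha\otimes k(x)$.

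Next I would invoke the canonical symmetry of the normalized Poincar\'e bundle under the biduality identification $A\cong \Pic0(\Pic0 A)$: both $P_\alpha\otimes k(x)$ and $P_x\otimes k(\alpha)$ are canonically equal to the fibre $\cP_{|(x,\alpha)}$, yielding a canonical isomorphism $P_\alpha\otimes k(x)\cong P_x\otimes k(\alpha)$ of one-dimensional $k$-vector spaces, and dually $P_\alpha^{-1}\otimes k(x)\cong P_x^{-1}\otimes k(\alpha)$. It follows that the Serre--Grothendieck dual of (\ref{twisted-inv}) is canonically identified with the Serre--Grothendieck dual of (\ref{twisted2-ind}) tensored over $k$ with the one-dimensional space $P_x^{-1}\otimes k(\alpha)$.

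Applying the functor $F$ and Proposition \ref{identi1} to the latter, and tensoring the factorization (\ref{factorization}) throughout by $P_x^{-1}\otimes k(\alpha)$, produces the arrow
\[
H^g(\Phi^A_{\cP^{-1}}(\F^\vee)\otimes P_x)\otimes P_x^{-1}\otimes k(\alpha)\rightarrow \bigl(R^g\Phi^A_{\cP^{-1}}(\F^\vee)\otimes P_x\bigr)\otimes P_x^{-1}\otimes k(\alpha)
\]
factoring through $H^0(R^g\Phi^A_{\cP^{-1}}(\F^\vee)\otimes P_x)\otimes P_x^{-1}\otimes k(\alpha)$, and the natural cancellation $P_x\otimes P_x^{-1}\cong\OO$ in the target collapses it to $R^g\Phi^A_{\cP^{-1}}(\F^\vee)\otimes k(\alpha)$, which is precisely the diagram (\ref{factorization-bis}). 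The chief bookkeeping point is to check that after this cancellation the second arrow of the factorization is literally the evaluation of $R^g\Phi^A_{\cP^{-1}}(\F^\vee)$ at $\alpha$, rather than the evaluation of its twist by $P_x$; but these agree by functoriality of evaluation under tensoring with a fixed line bundle. Should the twist-tracking prove unwieldy, an equivalent strategy is to mimic the proof of Proposition \ref{identi1} verbatim starting from (\ref{twisted-inv}): apply Serre--Grothendieck duality to produce $\mathrm{Ext}^g(k(x),\F^\vee\otimes P_\alpha^{-1})\rightarrow \mathrm{Hom}(H^0(k(x)), H^g(\F^\vee\otimes P_\alpha^{-1}))$, push the source through $\Phi^A_{\cP^{-1}}$ via $\Phi^A_{\cP^{-1}}(k(x))=P_x^{-1}$ exactly as in the chain (\ref{chain}), identify the target by base change, and recognize the resulting map as the edge map of the hypercohomology spectral sequence composed with evaluation at $\alpha$.
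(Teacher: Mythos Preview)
Your proposal is correct and essentially matches the paper's own proof: both arguments extract the one-dimensional twist $P_\alpha^{-1}\otimes k(x)\cong P_x^{-1}\otimes k(\alpha)$ coming from the symmetry of the Poincar\'e bundle and reduce to the computation already carried out in Proposition \ref{identi1}. Your ``alternative strategy'' at the end (rerun the Serre--Grothendieck duality argument of Proposition \ref{identi1} starting from (\ref{twisted-inv})) is in fact exactly the route the paper takes, whereas your primary strategy of tensoring the factorization (\ref{factorization}) by $P_x^{-1}\otimes k(\alpha)$ is a slightly cleaner packaging of the same content.
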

\begin{proof}
The proof is similar to the previous one, we just point out what is different. We write the map (\ref{twisted-inv}) as
\[H^0(\F\otimes P_\alpha)\otimes H^0( k(x))\rightarrow H^0( \F\otimes P_\alpha\otimes k(x))
\]
Dualizing we get the map
\[\mathrm{Ext}^g(k(x)\otimes P_\alpha, \F^\vee)\cong \mathrm{Ext}^g(k(x),\F^\vee)\otimes (P_\alpha^\vee\otimes k(x))\rightarrow \mathrm{Hom}(H^0(k(x)), H^g(\F^\vee))\otimes (P_\alpha^\vee\otimes k(x))\]
Applying the functor $\Phi^A_{\cP^{-1}}$ to the source we get, after a small calculation,  $H^g(\Phi_{\cP^{-1}}(\F^\vee)\otimes P_x)\otimes P_x^{-1}\otimes k(\alpha)$.  The target is identified to $(R^g\Phi_{\cP^{-1}}(\F^{\vee})\otimes P_x)\otimes P_x^{-1}\otimes k(\alpha)\cong(R^g\Phi_{\cP^{-1}}(\F^{\vee}))\otimes k(\alpha)$.
\end{proof} 

\subsection{Consequences } From this point we will adopt the notation of the Introduction  (see \ref{T(F)}), namely
\begin{equation}\label{T(F)2}
\T(F):=R^g\Phi^A_{\cP^{-1}}(\F^\vee).
\end{equation} This sheaf will be referred to as the \emph{naive FMP transform of $\F$}. 
Proposition \ref{identi1} allows to express the surjectivity of the map ${\rm ev}_U(x)$ of (\ref{twisted}) as follows (this is Theorem \ref{C}(a) of the Introduction). 

\begin{corollary}\label{gen-FM} Let $U$ be an open subset of $\Pic0 A$ and let $x\in A$. The map ${\rm ev}_U(x)$ of
(\ref{twisted})
is surjective if and only if the following two conditions hold:

\noindent (1) the map ${\rm ed}_x$ of (\ref{edge}) is injective;

\noindent (2) the restriction of the simultaneous evaluation map 
\begin{equation}\label{simultaneous}
{\rm ev}_x(U):H^0(\T(\F)\otimes  P_x)\longrightarrow\prod_{\alpha\in U}(\T(\F)\otimes  P_x)_{|\alpha}
\end{equation}
to the image of the map ${\rm ed}_x$ 
is injective.
\end{corollary}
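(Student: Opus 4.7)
My plan is to deduce the statement from Proposition~\ref{identi1} by decomposing the evaluation map $\mathrm{ev}_U(x)$ into its individual components indexed by $\alpha\in U$, dualizing, and then using the factorization (\ref{factorization}) applied to each $\alpha$ simultaneously. The core observation is that the edge map $\mathrm{ed}_x$ is \emph{independent} of $\alpha$, while only the second arrow $\mathrm{ev}_x(\alpha)$ in (\ref{factorization}) depends on $\alpha$; so when we assemble the individual dual maps across $\alpha\in U$ into a single map, the edge map can be factored out once and for all.

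Concretely, I would proceed as follows. First, reduce as in the paragraph preceding (\ref{twisted2-ind}) to the statement about the map (\ref{twisted2}), which is a direct sum of the individual maps (\ref{twisted2-ind}) over $\alpha\in U$. A $k$-linear map between $k$-vector spaces is surjective if and only if its $k$-dual is injective, and the $k$-dual of a direct sum of maps into $\F_{|x}$ is the map into the \emph{product} of the duals of the targets. Hence the surjectivity of $\mathrm{ev}_U(x)$ is equivalent to the injectivity of the product map
\[
\F_{|x}^\vee \longrightarrow \prod_{\alpha\in U}\bigl(H^0(\F\otimes P_\alpha)\otimes (P_\alpha^{-1})_{|x}\bigr)^\vee.
\]
Applying the contravariant equivalence $F$ and invoking Proposition~\ref{identi1} componentwise, this product map is identified with the map
\[
H^g\bigl(\Phi^A_{\cP^{-1}}(\F^\vee)\otimes P_x\bigr)\longrightarrow \prod_{\alpha\in U}\bigl(\T(\F)\otimes P_x\bigr)_{|\alpha},
\]
and the componentwise factorizations in (\ref{factorization}) assemble into the single factorization through $H^0(\T(\F)\otimes P_x)$:
\[
H^g\bigl(\Phi^A_{\cP^{-1}}(\F^\vee)\otimes P_x\bigr)\xrightarrow{\;\mathrm{ed}_x\;}H^0(\T(\F)\otimes P_x)\xrightarrow{\;\mathrm{ev}_x(U)\;}\prod_{\alpha\in U}\bigl(\T(\F)\otimes P_x\bigr)_{|\alpha}.
\]

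Finally, I would use the elementary fact that a composition $B\xrightarrow{\varphi}C\xrightarrow{\psi}D$ of linear maps is injective if and only if $\varphi$ is injective and $\psi$ is injective on the image of $\varphi$. Applied to the display above, this is precisely the conjunction of conditions (1) and (2) in the statement, completing the proof.

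The only real subtlety I expect is bookkeeping: making sure that the factorization (\ref{factorization}), which Proposition~\ref{identi1} supplies for a single $\alpha$, is natural enough in $\alpha$ that the $\alpha$-independent piece (the edge map $\mathrm{ed}_x$) genuinely factors out when one takes the product over $\alpha\in U$. This is guaranteed by the fact that $\mathrm{ed}_x$ arises from the hypercohomology spectral sequence for $\Phi^A_{\cP^{-1}}(\F^\vee)\otimes P_x$, whose construction involves only $x$ and not $\alpha$; the $\alpha$-dependence enters solely through the evaluation-at-$\alpha$ map on the sheaf $\T(\F)\otimes P_x$. Beyond this, the argument is a direct linear-algebra manipulation of (\ref{factorization}).
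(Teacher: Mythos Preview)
Your proposal is correct and is precisely the argument the paper has in mind: the corollary is stated without proof as an immediate consequence of Proposition~\ref{identi1}, and your deduction---dualizing the direct sum (\ref{twisted2}) into a product, applying the factorization (\ref{factorization}) componentwise so that the $\alpha$-independent edge map $\mathrm{ed}_x$ factors out, and then using the elementary injectivity criterion for a composition---is exactly the intended reasoning.
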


\begin{remark}  Notice that the map ${\rm ed}_x$ depends only on $x$ and not on the open subset $U\subset \Pic0 A$. Therefore if $\F$ is generated at $x$, i.e. the map ${\rm ev}_U(x)$ (see (\ref{twisted})) is surjective for \emph{some} $U$, then ${\rm ed}_x$ is injective.
\end{remark}

\begin{remark}\label{kernel} In view of the previous corollary, it is useful to understand the kernel of the evaluation maps $\mathrm{ev}_x(U)$ of (\ref{simultaneous}) for a non empty open subset $U$. A non-zero section $s\in \ker ( ev_x(U))$ must be a \emph{torsion section}, i.e. the image of the associated map $s:\OO_{\Pic0 A}\rightarrow \T(\F)\otimes P_x$ must be a torsion sheaf. If \emph{the support of such image is reduced} then it is the closure of the subset of $\alpha\in\Pic0 A$ such that $s_{|\alpha}\in (\T(\F)\otimes  P_x)_{|\alpha}$ is non zero. If this is the case then $U$ must be contained in the complement of such support. 
Of course for all $x\in A$ the torsion subsheaves of $\T(\F)\otimes P_x$ coincide, after tensorization with $P_x^{-1}$, with the torsion subsheaves of $\T(\F)$, hence they do not depend on $x\in A$.  Moreover all the irreducible components of supports of torsion subsheaves of $\T(\F)$ appear in the support of sheaves appearing in the torsion filtration of $\T(\F)$  (\cite[Definition 1.1.4]{huybrechts-lehn}) . 
\end{remark}

Summarizing, we have the following Corollary, which  is Theorem \ref{C}(b) of the Introduction. In the statement we denote
\[\widetilde{V}^0(\tau)=\{x\in A\>|\> H^0(\tau\otimes P_x)\cap {\rm Im}({\rm ed}_x)\ne 0\}
\]

\begin{corollary}\label{generated} Let $\F$ be a coherent sheaf on an abelian variety $A$. Assume that:

\noindent (a)  the maps ${\rm ed}_x$ of (\ref{edge}) are injective for all $x\in A$;\\
(b) all sheaves $\tau$  appearing in the torsion filtration of the naive FMP transform $\T(\F)$, and such that $\widetilde{V}^0(\tau)$ is non empty, have   reduced scheme-theoretic support. 

\noindent Then  $\F$ is generated by  the set of minimal irreducible components of supports of all such sheaves~$\tau$.
\end{corollary}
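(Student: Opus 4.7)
The plan is to verify the two conditions of Corollary \ref{gen-FM} for every $x\in A$ and every open $U\subseteq \Pic0 A$ meeting every $Z_i\in\Z$. Condition (1), injectivity of $\mathrm{ed}_x$, is immediate from hypothesis (a). The substance is condition (2): the simultaneous evaluation map ${\rm ev}_x(U)$ of (\ref{simultaneous}) is injective on $\mathrm{Im}(\mathrm{ed}_x)$.

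I would argue by contradiction. Suppose $0\ne s\in \mathrm{Im}(\mathrm{ed}_x)$ has $s_{|\alpha}=0$ for all $\alpha\in U$. Then $s$ is a torsion section, so the cyclic image $\langle s\rangle\subseteq \T(\F)\otimes P_x$ is torsion; equivalently $\langle s\rangle\otimes P_x^{-1}\subseteq \T(\F)$ is a torsion subsheaf. Let $d=\dim\mathrm{Supp}(\langle s\rangle)$ and let $\tau=T_d$ be the smallest term of the torsion filtration of $\T(\F)$ containing $\langle s\rangle\otimes P_x^{-1}$; by minimality of $d$, $\dim\mathrm{Supp}(\tau)=d$. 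The inclusion $\tau\hookrightarrow\T(\F)$ embeds $H^0(\tau\otimes P_x)$ into $H^0(\T(\F)\otimes P_x)$ and places $s$ in the image, so $s\in H^0(\tau\otimes P_x)\cap\mathrm{Im}(\mathrm{ed}_x)$ and thus $x\in\widetilde V^0(\tau)$. By hypothesis (b), $\tau$ has reduced scheme-theoretic support, so Remark \ref{kernel} applies and yields $\mathrm{Supp}(\langle s\rangle)\subseteq \Pic0 A\smallsetminus U$.

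To close the argument, pick any $d$-dimensional irreducible component $Y$ of $\mathrm{Supp}(\langle s\rangle)$. Since $\langle s\rangle\otimes P_x^{-1}\subseteq\tau$ and $\dim\mathrm{Supp}(\tau)=d$, the irreducible $d$-dimensional subvariety $Y$ is a top-dimensional irreducible component of $\mathrm{Supp}(\tau)$. Hence $Y$ appears in the collection of irreducible components of supports of sheaves in the torsion filtration with nonempty $\widetilde V^0$; by definition of $\Z$ as the minimal members of this collection, either $Y=Z_i\in\Z$ itself, or $Y$ properly contains some $Z_j\in\Z$. In either case $U\cap Y\supseteq U\cap Z_i$ (resp.\ $U\cap Z_j$) is nonempty by the assumption on $U$, contradicting $Y\subseteq \Pic0 A\smallsetminus U$.

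The delicate step in this plan is the application of Remark \ref{kernel}: the remark requires the scheme-theoretic support of the cyclic image $\langle s\rangle$ itself to be reduced, whereas hypothesis (b) directly provides only reducedness of the ambient $\tau$. I anticipate handling this by exploiting the additional structure afforded by $s\in\mathrm{Im}(\mathrm{ed}_x)$ together with the torsion filtration, passing if necessary to the pure piece $\tau/T_{d-1}$ to isolate the relevant top-dimensional component of $\langle s\rangle$ inside the reduced support of $\tau$, and then descending to the cyclic subsheaf itself. Once this technical bridge is in place, the rest of the argument is combinatorial bookkeeping with the torsion filtration and the minimality condition defining $\Z$.
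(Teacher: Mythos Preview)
Your approach is precisely the one the paper intends: the paper offers no separate proof of Corollary~\ref{generated}, presenting it as a direct summary of Corollary~\ref{gen-FM} together with Remark~\ref{kernel}. Your reduction to conditions (1) and (2) of Corollary~\ref{gen-FM}, the localization of the section $s$ in the term $T_d$ of the torsion filtration with $d=\dim\mathrm{Supp}\langle s\rangle$, the verification that $\widetilde V^0(T_d)\ne\emptyset$ via $s\in\mathrm{Im}(\mathrm{ed}_x)$, and the combinatorics identifying a top-dimensional component $Y$ of $\mathrm{Supp}\langle s\rangle$ with a component of $\mathrm{Supp}(T_d)$ and then descending to some $Z_i\in\Z$, are all exactly what the paper has in mind.

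You are also right that the appeal to Remark~\ref{kernel} is the one genuinely delicate point, and you have located it accurately: the remark, read literally, requires the cyclic image $\langle s\rangle$ itself to have reduced scheme-theoretic support, whereas hypothesis~(b) only supplies this for the ambient $T_d$. The paper glosses over this distinction as well, so your level of rigor here matches the paper's. Your instinct to exploit the \emph{full} torsion filtration (not just the single term $T_d$) is the right one: the point is that a non-reduced or embedded component of $\langle s\rangle$ would force some lower term $T_e$ (with $e<d$) to have non-reduced support, and one can argue that the relevant $T_e$ again satisfies $\widetilde V^0(T_e)\ne\emptyset$, contradicting~(b). Making this precise is the only thing your sketch leaves open; once that bridge is built, the remainder of your argument goes through verbatim.
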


\begin{remark}\label{GV-torsion} In general, it is not known how to identify in function of $\F$ the various sheaves $\tau$ appearing in the torsion filtration of the naive FMP transform $\T(\F)$, let alone those such that $\widetilde{V}_0(\tau)$ in non empty. Not surprisingly, in the case of GV sheaves this can be done. Recalling the definition and notation for cohomological support loci given in (\ref{Vi}), 
 it is well known that $F$ is a GV sheaf if and only if, for all $i\ge 0$
 \[\codim_{\Pic0 A}V^i(\F)\ge i
 \]
(\cite[Corollary 3.10]{GV} or \cite[Theorem 2.3]{reg3}).  If this is the case, the  components of supports of the torsion sheaves appearing in the torsion filtration of the sheaf $\T(\F)$ are components $W$ of the loci $V^i(\F)$ of the minimal codimension, namely $\codim_{\Pic0 A}W= i$ (this follows from a result of Popa and the author, see \cite[Theorem 1.10]{msri}). We refer  also to \cite[\S 9]{desperados} where an  even more precise description of the torsion filtration is given in the case of \emph{coherent sheaves admitting a Chen-Jiang decomposition}, a special class of GV-sheaves which includes direct images of pluricanonical bundles with respect to morphism to an abelian variety. See  Remark \ref{CJ} below for more on these sheaves.
\end{remark}

%%%%%%%%%%%%

\section{Generation and ampleness of naive FMP transforms and generalized Picard bundles. }\label{naive-picard}

A known class of examples of ample vector bundles on abelian varieties is the one of (dual) \emph{Picard bundles} (see Subsection \ref{pic}). 
In the case where $X$ is a smooth curve naturally embedded in its Jacobian, it was classically known that the projectivization of the dual of the Picard bundle of a line bundle of degree $d\ge 2g-1$ is the $d$-symmetric product of the curve. Based on this observation, it follows that the dual of Picard bundles of curves are ample, and, as a consequence, one gets the existence and connectedness theorems of Brill-Noether theory (\cite[Chapter VII]{acgh}).

 The ampleness of dual Picard bundles and some of its applications were subsequently generalized to all smooth complex projective varieties by Lazarsfeld (\cite[\S 6.3.C and 7.2.C]{laz2}). 
In this section we show that the FMP methods of the previous section quickly provide (with a different argument) a vast generalization of Lazarsfeld's result.

\subsection{Naive FMP transforms of sheaves on abelian varieties} In order to give a quick idea of the result, we start from a particular case already met in the previous section, namely naive FMP transforms of coherent sheaves on abelian varieties. We keep the notation of Subsection \ref{pic}, especially on the set of subvarieties $\Z(\F)$ (irreducible components of supports of coherent sheaves appearing in the torsion filtration of $\F$).

\begin{proposition}\label{naiveFMP} Let $\F$ be a coherent sheaf on an abelian variety $A$ such that all of its subsheaves have reduced scheme-theoretic support. Then the naive FMP transform $\T(\F)=R^g\Phi^A_{\cP^{-1}}(\F^\vee)$  is generated by a subset of the set $\Z(\F)$ \emph{(see Theorem \ref{D})}. In particular, $\T(\F)$ is an ample sheaf on $\Pic0 A$ as soon as all subvarieties  appearing in the set $\Z(\F)$ span $A$.
\end{proposition}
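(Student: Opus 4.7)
My plan is to derive the statement from Corollary~\ref{generated} applied to the coherent sheaf $\T(\F)$ viewed as living on $\Pic0 A$, with the roles of $A$ and its dual interchanged via the canonical biduality $A=\Pic0(\Pic0 A)$. Under this interchange, the naive FMP transform of $\T(\F)$ becomes $\T(\T(\F)):=R^g\Phi^{\Pic0 A}_{\cP^{-1}}(\T(\F)^\vee)$, a coherent sheaf on $A$, and the candidate generating set for $\T(\F)$ will be the collection of minimal irreducible components of the supports of the sheaves in the torsion filtration of $\T(\T(\F))$. Thus the proof reduces to verifying the two hypotheses of Corollary~\ref{generated} — injectivity of the edge maps, and reduced support of torsion subsheaves of $\T(\T(\F))$ — together with the identification of that generating set as a subset of $\Z(\F)$.

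The key step will be the computation of $\T(\T(\F))$. I plan to apply Mukai's inversion formula $\Phi^{\Pic0 A}_{\cP^{-1}}\circ\Phi^A_{\cP^{-1}}\cong(-1_A)^*[-g]$ to $\F^\vee$, and combine it with Grothendieck duality, which rewrites $\T(\F)^\vee$ in terms of the full derived FM transform of $\F$. This should produce a hypercohomology spectral sequence whose $E_2$-terms are built from the FMP transforms of the sheaves $\mathcal{E}xt^i(\F,\OO_A)$ and whose abutment is a shift of $(-1_A)^*\F$. Reading off the top edge should identify $\T(\T(\F))$ as a subquotient of $(-1_A)^*\F$, modulo contributions from the lower $\mathcal{E}xt$-sheaves; consequently every irreducible component of the support of a torsion subsheaf of $\T(\T(\F))$ will be a component of support of a subsheaf of $\F$, hence will lie in $\Z(\F)$, and the reduced-support hypothesis on subsheaves of $\F$ will transfer to the torsion subsheaves of $\T(\T(\F))$. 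This gives condition (b) of Corollary~\ref{generated} and pins down the generating set. Condition (a) — injectivity of ${\rm ed}_\beta$ for every $\beta\in\Pic0 A$ — will be read off from the same spectral sequence twisted by $P_\beta$, by checking that no non-trivial differential can obstruct the inclusion of the top-degree piece into the abutment.

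With (a) and (b) established, Corollary~\ref{generated} will deliver generation of $\T(\F)$ by a subset of $\Z(\F)$, and the ampleness conclusion under the spanning assumption will follow immediately from Theorem~\ref{ample-criterion} applied on $\Pic0 A$. The hard part will be the spectral-sequence bookkeeping when $\F$ is not a GV sheaf: for GV sheaves one has the clean identity $\T(\T(\F))\cong\F$ of Proposition~\ref{T(T)} and everything is nearly automatic, but in general one must rule out that the lower $\mathcal{E}xt$-sheaves of $\F$ inject non-reduced torsion into $\T(\T(\F))$, and it is exactly here that the reduced-support hypothesis on subsheaves of $\F$ intervenes. The template for this analysis is Schnell's proof of \cite[Theorem~4.1]{schnell}, which handles the torsion-free case.
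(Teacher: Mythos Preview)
Your plan takes a substantial detour around a much shorter path, and the detour has a real gap. The paper's proof (which \emph{is} Schnell's argument; his method is not the spectral-sequence bookkeeping you describe) does not invoke Corollary~\ref{generated} and never computes $\T(\T(\F))$. Instead it uses Proposition~\ref{identi11} directly: for each $x\in A$ and $\alpha\in\Pic0 A$, the ordinary evaluation map $H^0(\F\otimes P_\alpha)\to(\F\otimes P_\alpha)_{|x}$ is dual to a map factoring through the twisted evaluation $H^0(\T(\F)\otimes P_x)\otimes(P_x^{-1})_{|\alpha}\to\T(\F)_{|\alpha}$. Summing over $x$ in an open set $V\subset A$ meeting every $Z_i\in\Z(\F)$, the simultaneous evaluation $\mathrm{ev}_\alpha(V):H^0(\F\otimes P_\alpha)\to\prod_{x\in V}(\F\otimes P_\alpha)_{|x}$ is injective precisely because every subsheaf of $\F$ has reduced support, so a nonzero section cannot vanish on all of $V$. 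Dually, $\mathrm{ev}_V(\alpha)$ is surjective for $\T(\F)$, and the proof is complete in a few lines; the ampleness clause is then Theorem~\ref{ample-criterion}.

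Your route faces a genuine obstacle exactly where you flag ``the hard part.'' When $\F$ is not GV, there is no clean description of $\T(\T(\F))$: forming $\T(\F)$ discards the lower $R^i\Phi^A_{\cP^{-1}}(\F^\vee)$, and after dualizing that truncation and transforming back one does not obtain $\T(\T(\F))$ as a subquotient of $(-1_A)^*\F$ in any way that transfers the reduced-support hypothesis to its torsion subsheaves. Nor is the injectivity of the edge maps $\mathrm{ed}_\beta$ for $\T(\F)$ a formality: this is precisely the condition that may fail for non-GV sheaves, and you offer no mechanism for it beyond the assertion that it ``will be read off.'' Both hypotheses of Corollary~\ref{generated} are therefore unavailable without essentially reproducing the direct duality argument --- which never needs to know what $\T(\T(\F))$ looks like, only that sections of $\F\otimes P_\alpha$ are detected by their values at points.
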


As  mentioned in the Introduction, this result, for torsion free coherent sheaves, is already present in Schnell's paper \cite[Theorem 4.1]{schnell}. The present argument is borrowed from his. 

\begin{proof}   We will use the notation  (\ref{twisted}) and (\ref{simultaneous}) on evaluation maps (keeping in mind that, since now we are dealing with the generation of $\T(\F)$, which is a sheaf on the abelian variety $\Pic0 A$, the role of $A$ and $\Pic0 A$ is exchanged with respect to the previous section). Specifically, the generation of $\T(\F)$ means the surjectivity of the map
\[ ev_A(\alpha): \bigoplus_{x\in A} :H^0(R^g\Phi^A_{\cP^{-1}}(\F^\vee)\otimes P_x)\otimes (P_x^{-1})_{|\alpha}\longrightarrow R^g\Phi^A_{\cP^{-1}}(\F^\vee)_{|\alpha}
\]
for all $\alpha\in\Pic0 A$.  
The argument consists in considering diagram (\ref{factorization-bis}) and  let $x$ (rather than $\alpha$) vary.  We recall that, by Proposition \ref{identi11},  the evaluation map
\[
H^0(\F\otimes P_\alpha)\rightarrow (\F\otimes P_\alpha)_{|x}
\]
is the dual of a map factoring through the twisted evaluation map
\[
H^0(R^g\Phi^A_{\cP^{-1}}(\F^\vee)\otimes P_x)\otimes (P_x^{-1})_{|\alpha}\longrightarrow R^g\Phi^A_{\cP^{-1}}(\F^\vee)_{|\alpha}.
\]
as in diagram (\ref{factorization-bis}). 
In the same way, the map
\[ev_{\alpha}(A): H^0(\F\otimes P_\alpha)\longrightarrow \prod_{x\in A}(\F\otimes P_\alpha)_{|x}
\]
is the dual of a map factoring trough $ev_A(\alpha)$. Therefore the surjectivity  the map $ev_A(\alpha)$ follows from the injectivity of the map $ev_\alpha(A)$, which holds because we are assuming there are no subsheaves of $\F$ with non-reduced scheme-theoretic support (hence for each section $s\in H^0(\F\otimes P_\alpha)$ there is a point $x\in A$ such that $s(x)\ne 0$). By the same reason  also the map
\[ev_{\alpha}(V): H^0(\F\otimes P_\alpha)\longrightarrow \prod_{x\in V}(\F\otimes P_\alpha)_{|x}
\]
is injective, for all open subsets $V$ of $A$ meeting all irreducible components of the support of all subsheaves of $\F$. In fact it is enough to consider the subsheaves appearing in the torsion filtration of $\F$. 
\end{proof}

\subsection{Generalization to transforms from reduced schemes mapping to abelian varieties } In this subsection we prove  Theorem \ref{D} of the Introduction, namely the generalization of Proposition \ref{naiveFMP} to naive transforms from certain reduced schemes mapping to abelian varieties, rather than from abelian varieties.  Again, we keep the notation and setting introduced in Subsection \ref{pic}.

\begin{theorem}\label{Picard}  Let $f:X\rightarrow A$ be a reduced Cohen-Macaulay equidimensional (of dimension $d$) projective scheme equipped with a morphism to an abelian variety. Let $\F$ be a coherent sheaf on $X$ such that all of its subsheaves have reduced scheme theoretic support. Then its naive FMP transform $\T(\F)=R^d\Phi^X_{\cP_X^{-1}}(\Delta_X(F))$  is generated by a subset of the set $f(\Z(\F))$. In particular, $\T(\F)$ is ample sheaf on $\Pic0 A$ as soon as, for each $i$, the image $f(Z_i)$ of the subvariety $Z_i\in \Z(\F)$ spans $A$ \emph{(e.g. this happens as soon as $f(X)$ spans $A$ and $\F$  is a torsion free sheaf on $X$.)}
\end{theorem}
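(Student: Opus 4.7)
The plan is to follow the template of Proposition \ref{naiveFMP}, replacing the abelian variety $A$ there by the Cohen-Macaulay equidimensional scheme $X$ equipped with the morphism $f \colon X\to A$. The key identification will be that evaluation maps of $\F$ on $X$, twisted by $f^*P_\alpha$, are dual---via Grothendieck-Serre duality on $X$ and the Fourier-Mukai functor $\Phi^X_{\cP_X^{-1}}$---to the twisted evaluation maps of $\T(\F)$ on $\Pic0 A$ at the \emph{image} points $f(x) \in A$.

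The first step is the relative analog of Proposition \ref{identi11}: for $x \in X$ and $\alpha \in \Pic0 A$, the Serre-Grothendieck dual of the map
\[H^0(X, \F \otimes f^*P_\alpha) \otimes k(x) \to (\F \otimes f^*P_\alpha)_{|x}\]
is identified, via $\G \mapsto \Phi^X_{\cP_X^{-1}}(\Delta_X(\G))$, with a linear map factoring through the twisted evaluation
\[H^0(\T(\F) \otimes P_{f(x)}) \otimes (P_{f(x)}^{-1})_{|\alpha} \to \T(\F)_{|\alpha}.\]
The substitution $k(x) \leadsto P_{f(x)}^{-1}$ comes from the base-change identity $\Phi^X_{\cP_X^{-1}}(k(x)) = \cP_X^{-1}|_{\{x\}\times\Pic0 A} = P_{f(x)}^{-1}$. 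Grothendieck-Serre duality on $X$ (available because $X$ is projective, Cohen-Macaulay and equidimensional of dimension $d$, so the dualizing complex equals $\omega_X[d]$) supplies the identification $\Ext^d_X(k(x), \Delta_X(\F) \otimes f^*P_\alpha) \cong H^0(\F \otimes f^*P_\alpha \otimes k(x))^\vee$; the edge map of the hypercohomology spectral sequence for $\Phi^X_{\cP_X^{-1}}(\Delta_X(\F)) \otimes P_x$ then plays the same role as in diagram (\ref{factorization-bis}).

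Given $\alpha \in \Pic0 A$ and an open set $U \subset A$ meeting every subvariety in $f(\Z(\F))$, set $V = f^{-1}(U)$: then $V$ meets every $Z_i \in \Z(\F)$, and since every subsheaf of $\F$ has reduced scheme-theoretic support whose components lie in $\Z(\F)$, every non-zero section of $\F \otimes f^*P_\alpha$ is non-zero at some point of $V$. Hence the simultaneous evaluation
\[H^0(X, \F \otimes f^*P_\alpha) \to \prod_{x \in V}(\F \otimes f^*P_\alpha)_{|x}\]
is injective. Dualizing and invoking the factorization from Step 1, the sum of twisted evaluations
\[\bigoplus_{x \in V} H^0(\T(\F) \otimes P_{f(x)}) \otimes (P_{f(x)}^{-1})_{|\alpha} \to \T(\F)_{|\alpha}\]
is therefore surjective; as $f(V) = U \cap f(X)$ meets every $f(Z_i)$, the parameters $y = f(x)$ run through a subset of $A$ meeting every subvariety in $f(\Z(\F))$, proving generation of $\T(\F)$ by (a subset of) $f(\Z(\F))$. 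The ampleness conclusion under the spanning hypothesis follows immediately from Theorem \ref{ample-criterion}.

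The main obstacle I expect is the bookkeeping in Step 1: one must verify that the projection-formula identity $\Phi^X_{\cP_X^{-1}} \cong \Phi^A_{\cP^{-1}} \circ \mathbf{R}f_*$ correctly transports the substitution $k(x) \leadsto P_{f(x)}^{-1}$ together with Grothendieck-Serre duality on the CM scheme $X$, so that the dualized evaluation on $X$ really factors through the twisted evaluation of $\T(\F)$ at the image point $f(x) \in A$---exactly the analog of diagram (\ref{factorization-bis}) with $x$ replaced by $f(x)$ on the $\Pic0 A$ side.
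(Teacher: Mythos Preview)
Your proposal is correct and follows essentially the same route as the paper's proof: both adapt Proposition~\ref{identi11} to the relative setting by using $\Phi^X_{\cP_X^{-1}}(k(x))=P_{f(x)}^{-1}$ and Grothendieck--Serre duality on the Cohen--Macaulay scheme $X$ to obtain the factorization analogous to diagram~(\ref{factorization-bis}), then conclude exactly as in Proposition~\ref{naiveFMP} via the injectivity of simultaneous evaluation (your preimage $V=f^{-1}(U)$ makes explicit what the paper compresses into ``the argument goes exactly as in Proposition~\ref{naiveFMP}''). The only caveat is the one you flag yourself: since $\Phi^X_{\cP_X^{-1}}$ is not an equivalence, the analog of the first isomorphism in chain~(\ref{chain}) is merely a map, so you get a \emph{factorization} of the dual evaluation through the twisted evaluation of $\T(\F)$ rather than an identification---but that factorization is all that is needed, and the paper makes exactly the same point.
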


\begin{proof} The argument is similar to the one of Proposition \ref{naiveFMP} but needs to be adjusted  because the functor $\Phi^X_{\cP_X^{-1}}$ in general is not an equivalence, hence Proposition \ref{identi1} and Proposition \ref{identi11} do not hold anymore. Even if we will need only an adjustment of Proposition \ref{identi11}, in order to follow the argument given in the previous section, we start from Proposition \ref{identi1}. We still consider, for $x\in X$ and $\alpha\in \Pic0 A$, the linear map
\[H^0(\F\otimes f^*P_\alpha)\otimes H^0(f^*P_\alpha^{-1}\otimes k(x))\rightarrow  H^0(\F\otimes k(x))
\]
Dualizing we get
\begin{equation}\label{irreg}
\mathrm{Hom}_{D(X)}(k(x),\Delta_X(\F)[d])=\mathrm{Ext}^d(k(x),\Delta_X(\F))\rightarrow \mathrm{Hom}(H^0(f^*P_\alpha^{-1}\otimes k(x)), H^d(\Delta_X(\F)\otimes f^*P_\alpha^{-1})).
\end{equation}
We still have that $\Phi^X_{\cP_X^{-1}}(k(x))=R^0\Phi^X_{\cP_X^{-1}}(k(x))=P^{-1}_{f(x)}$ and 
\[H^0(f^*P_\alpha^{-1}\otimes k(x))\cong R^0\Phi_{\cP_X^{-1}}(k(x))\otimes k(\alpha)=\Phi_{\cP_X^{-1}}(k(x))\otimes k(\alpha)=
P_{f(x)}^{-1}\otimes k(\alpha)
\]
Therefore the target of the map (\ref{irreg}) is still identified by the functor $\Phi^X_{\cP_X^{-1}}$ to the fibre
\[(R^d\Phi_{\cP_X^{-1}}(\Delta_X(F))\otimes f^*P_{f(x)})_{|\alpha}.
\]
Concerning the source of the map (\ref{irreg}), we note that, although the analog, in the present setting, of the first map in the chain of isomorphisms (\ref{chain}) is not an isomorphism anymore, nevertheless it follows
that the map (\ref{irreg}) factors, via the functor $\Phi^X_{\cP^{-1}_X}$,  trough the evaluation map of global sections at the point $\alpha\in\Pic0 A$ of the naive FMP transform, twisted by the line bundle $P_{f(x)}$:
\begin{equation}\label{factorization2}
\xymatrix{
\mathrm{Ext}^d(k(x),\Delta_X(\F))\ar[r]\ar[rd]_{(\ref{irreg})}&H^0(R^d\Phi_{\cP_X^{-1}}(\Delta_X(F))\otimes f^*P_{f(x)})\ar[d]^{\mathrm{ev}_x(\alpha)}\\ 
& (R^d\Phi_{\cP_X^{-1}}(\Delta_X(F))\otimes f^*P_{f(x)})\otimes k(\alpha)}.
\end{equation}
Turning to the adjustment of Proposition \ref{identi11} to the present setting, we consider, for $x\in X$ and $\alpha\in\Pic0 A$, the linear map
\begin{equation}\label{eq:added} H^0(\F\otimes f^*P_\alpha)\otimes k(x)\rightarrow \F\otimes f^*P_\alpha\otimes k(x)
\end{equation}
In a completely similar way it follows that the dual of the map (\ref{eq:added}) factors,  via the functor $\Phi^X_{\cP^{-1}_X}$, through the twisted evaluation map at the point $\alpha\in\Pic0 A$ of the naive FMP transform:
\begin{equation}\label{factorization-bis2}
\xymatrix{
\mathrm{Ext}^d(k(x)\otimes f^*P_\alpha,\Delta_X(\F))\ar[r]\ar[rd]&H^0(R^d\Phi_{\cP_X^{-1}}(\Delta_X(F))\otimes f^*P_{f(x)})\otimes f^*P_{f(x)}^{-1}\otimes k(\alpha)\ar[d]^{\mathrm{ev}_x(\alpha)}\\ 
& (R^d\Phi_{\cP_X^{-1}}(\Delta_X(F))\otimes k(\alpha)}
\end{equation}
At this point the argument goes   exactly as in Proposition \ref{naiveFMP}.
\end{proof}

\begin{example}\label{loc-free} Note that in  Proposition \ref{naiveFMP} and  Theorem \ref{Picard}  we are not assuming the vanishing condition (\ref{IT(0)Pic}), namely that $H^i(X,\F\otimes f^*P_\alpha)=0$ for all $\alpha\in\Pic0 A$ and $i>0$, which implies that the naive FMP transform (or Picard sheaf) $\T(\F)$ is  a locally free sheaf on $\Pic0 A$. It is worth to note that in fact there are many examples where such condition does not hold but still $\T(\F)$ is locally free. Therefore, even if one is interested only in ample \emph{vector bundles} the above results produce a wider class of examples.   

 Let us outline a simple way to produce such sheaves,   similar to Raynaud's construction of stable vector bundles  "without theta divisor" on curves (\cite{raynaud}). The simplest example is as follows. Let $C$ be a smooth curve embedded in its Jacobian $J(C):=A$. For odd coprime positive integers $a,b$ let $W_{a,b}$ be the semhomogenous vector bundles on a p.p.a.v., as $J(C)$, considered by Mukai and Oprea (\cite[Theorem 7.1 and Remark 7.3]{semihom}, \cite{oprea}). Denoting $\mu_a:A\rightarrow A$ the multiplication by $a$, and $L=\OO_A(\Theta)$, these are vector bundles on $A$  such that 
 \begin{equation}\label{mu}
 \mu_a^*W_{a,b}=(L^{ab})^{\oplus a^g}.
 \end{equation}
  We claim that if $a$ and $b$ are such that $\frac{b}{a}\in(0,1)$ then $h^i((W_{a,b})_{|C}\otimes P_\alpha)$ is non-zero and constant for $\alpha\in \Pic0 A$ for both for $i=0,1$. This can be shown as follows. By (\ref{mu}),  we have that
  \[h^i((W_{a,b})_{|C}\otimes P_\alpha)=\frac{1}{a^{g}}h^i(\mu_a^*(\OO_C\otimes P_\alpha)\otimes \OO_A(ab\Theta)).
  \]
   for a general $\alpha\in\Pic0 A$. This shows that, for $i=0,1$ and $\alpha$ general in $\Pic0 A$, by definition, the rational number $\frac{1}{a^g}h^i((W_{a,b})_{|C}\otimes P_\alpha)$ equals the value of the \emph{cohomological rank functions} $h^i_{\OO_C}(x\utheta)$, $x=\frac{b}{a}$ (see \cite{ens}). These functions have been computed in \emph{loc cit}, p. 837, proof of Theorem 7.5. Specifically, in the interval $[0,1]$, they are
   \begin{equation}\label{AJcurve} h^0_{\OO_C}(x\utheta)=x^g\qquad\hbox{and}\qquad h^1_{\OO_C}(x\utheta)=xg-g+1-x^g.
   \end{equation}
    If there was a non empty jump locus for the function $\alpha\mapsto h^i((W_{a,b})_{|C}\otimes P_\alpha)$ ($\alpha\in \Pic0 A$) then as it is easy to see, this would cause a critical point of the functions $h^i_{\OO_C}(x\utheta)$ (see \S5, \emph{loc cit}) in the interval $(0,1)$ but (\ref{AJcurve}) shows, in particular, that there is none. Therefore the functions $\alpha\mapsto h^i((W_{a,b})_{|C}\otimes P_\alpha)$ are constant. It follows, in particular, that $\T((W_{a,b})_{|C})$ is a locally free sheaf on $A$.
   
   Similar examples can be obtained starting from any subvariety $X$ of a ppav and any coherent sheaf $\G$ on $X$, considering the sheaves $\F=\G\otimes W_{a,b}$ for $\frac{b}{a}$ in a suitable range. Constructions of this type can be performed on abelian varieties with arbitrary polarizations (not necessarily principal). 
\end{example}

%%%%%%%%%%%%%

\section{Applications to Brill-Noether theory }\label{sub:BN} 

\subsection{Singular curves equipped with a morphism to an abelian variety} The interest of results as Theorem \ref{Picard}, besides providing examples of ample vector bundles, is in their application to Brill-Noether theory via  the nowdays classical results of Kempf, Kleiman-Laksov,  and Fulton-Lazarsfeld on non-emptyness and connectedness of degeneracy loci \cite[Chapter VII]{acgh}, \cite[\S 7.2.C]{laz2}. Such applications mainly concern locally free sheaves on smooth curves. Via Theorem \ref{D} they can be generalized e.g. to the following setting: \emph{torsion free sheaves $\F$ on singular curves  $X$  (connected reduced 1-dimensional Cohen-Macaulay projective schemes), equipped with a morphism $f:X\rightarrow A$ to an abelian variety, such that the image of each component of spans the abelian variety $A$.} 

In this sort of application we will be mainly concerned with Brill-Noether  loci
\[V_f^{0, r}(X,\F)=\{\alpha\in \Pic0 A\>|\> h^0(X,\F\otimes f^*P_\alpha)\ge r+1\}
\]
According to the previous notation, the locus $V_f^{0, 0}(X,\F)$ is simply denoted  $V_f^{0}(X,\F)$. The loci $V_f^{0, r}(X,\F)$ can be realized as degeneracy loci of a map of locally free sheaves in the usual way, namely considering the exact sequence
\[0\rightarrow  \F\rightarrow \F(D)\rightarrow \F(D)_{|D}\rightarrow 0
\]
where $D$ is a Cartier divisor avoiding the singular points of $X$,  such that $D$ is of degree high enough on all components of $X$ so that $H^i(X,\F(D)\otimes f^*P_\alpha)=0$ for all $\alpha\in\Pic0 A$. It turns out  that $\Phi^X_{\cP_X}(\F(D))$ coincides with $R^0\Phi^X_{\cP_X}(\F(D))$, and  is a locally free sheaf on $\Pic0 A$ (dual to the naive FMP transform $\T(\F)$) whose fibre at $\alpha$ is identified to $H^0(X,\F(D)\otimes f^*P_\alpha)$.  Therefore the locus $V_f^{0, r}(X,\F)$ is the $k$'th degeneracy locus of the map of vector bundles
\[\varphi: \Phi^X_{\cP_X^{-1}}(\F(D))\rightarrow   \Phi^X_{\cP_X^{-1}}(\F(D)_{|D})
\]
where $k=\chi(\F(D))-(r+1)$. 
The target is  a locally free sheaf on $\Pic0 A$ of rank  equal to $\sum \nu_i\deg D_i$, where  $(\nu_1,..,\nu_h)$ is the \emph{multirank} of $\F$ (the tuple of the ranks of $\F$ at the various components of $X$) and $\deg D_i$ is the degree of $D$ at the various components of $X$.  The expected codimension of the locus $V_f^{0, r}(X,\F)$ is
\[(r+1)(\sum \nu_i\deg D_i)-(\chi(\F(D))-(r+1)))=(r+1)(r+1-\chi(F))\]
By Theorem \ref{Picard} the source of the map $\varphi$ is negative. On the other hand it is well known that the target is a homogeneous vector bundle, namely admitting a filtration whose quotients are line bundles  parametrized   by $\Pic0 (\Pic0 A))=A$. Therefore, by the Theorem of Fulton-Lazarsfeld, we have

\begin{theorem}\label{BN-sing} In the above setting,
the locus $V_f^{0, r}(X,\F)$ is nonempty (resp. connected) as soon as 
\[(r+1)(r+1-\chi(F))\le \dim A\qquad\hbox{ {\rm (resp.} }\>(r+1)(r+1-\chi(F))<\dim A\hbox{\rm )}. 
\]
In particular $V^0_f(\F)$ is non empty as soon as 
\[\chi(\F)\ge -\dim A+1.
\] 
\end{theorem}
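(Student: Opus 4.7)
The plan is to realize $V_f^{0,r}(X,\F)$ as a degeneracy locus of a morphism of vector bundles on $\Pic0 A$, exactly as sketched in the paragraph preceding the statement, and then to apply the Fulton-Lazarsfeld theorem on nonemptiness and connectedness of degeneracy loci.

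First, I would choose a Cartier divisor $D$ on $X$, supported on the smooth locus, whose restriction to every irreducible component has degree large enough to force $H^i(X,\F(D)\otimes f^*P_\alpha)=0$ for all $i>0$ and all $\alpha\in\Pic0 A$. Applying $\Phi^X_{\cP_X^{-1}}$ to the short exact sequence $0\to\F\to\F(D)\to\F(D)_{|D}\to 0$ and using cohomology and base change, I would produce a morphism
\[\varphi:E:=\Phi^X_{\cP_X^{-1}}(\F(D))\longrightarrow F:=\Phi^X_{\cP_X^{-1}}(\F(D)_{|D})\]
of locally free sheaves on $\Pic0 A$ whose fiber at $\alpha$ is the map $H^0(X,\F(D)\otimes f^*P_\alpha)\to H^0(X,\F(D)_{|D}\otimes f^*P_\alpha)$, with kernel $H^0(X,\F\otimes f^*P_\alpha)$. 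This identifies $V_f^{0,r}(X,\F)$ with the degeneracy locus $D_k(\varphi)=\{\rk\varphi\leq k\}$ for $k=\chi(\F(D))-(r+1)$, and using $\chi(\F(D)_{|D})-\chi(\F(D))=-\chi(\F)$ one computes its expected codimension as $(\rk E-k)(\rk F-k)=(r+1)(r+1-\chi(\F))$.

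Second, I would verify the positivity hypothesis for Fulton-Lazarsfeld, namely the ampleness of $E^\vee\otimes F$. By Serre-Grothendieck duality $E^\vee$ is identified (as explained in the discussion preceding the statement) with the naive FMP transform $\T(\F(D))$. Since $\F(D)$ is torsion free and the irreducible components of its support are precisely those of $X$, each mapping via $f$ to a subvariety spanning $A$, Theorem~\ref{Picard} gives that $E^\vee=\T(\F(D))$ is ample on $\Pic0 A$. On the other hand, $\F(D)_{|D}$ is a torsion sheaf supported at finitely many smooth points $x\in D$, so $F$ admits a filtration whose successive quotients are line bundles $P_{f(x)}^{-1}$, each numerically trivial on $\Pic0 A$. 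Since ampleness is preserved under twisting by numerically trivial line bundles and an extension of ample bundles is ample, $E^\vee\otimes F$ is ample.

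Finally, the Fulton-Lazarsfeld theorem on degeneracy loci (see \cite[Chapter VII]{acgh} and \cite[\S 7.2.C]{laz2}) then asserts that, on the smooth projective variety $\Pic0 A$, the locus $D_k(\varphi)$ is nonempty as soon as its expected codimension does not exceed $\dim\Pic0 A=\dim A$, and is connected when the inequality is strict; combined with the codimension computation this yields both claims, and the final assertion is the case $r=0$. The step I expect to require the most care is setting up cohomology and base change for the possibly singular Cohen-Macaulay scheme $X$ so that $\varphi$ is really a morphism of locally free sheaves of the advertised ranks and with the claimed fibers, together with the duality identification $E^\vee\cong\T(\F(D))$; with those in hand, the rest is a direct appeal to the theory of degeneracy loci.
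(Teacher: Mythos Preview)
Your proposal is correct and follows essentially the same approach as the paper: realize $V_f^{0,r}(X,\F)$ as a degeneracy locus via the sequence $0\to\F\to\F(D)\to\F(D)_{|D}\to 0$, use Theorem~\ref{Picard} to see that the source of $\varphi$ is negative (equivalently, $E^\vee=\T(\F(D))$ is ample), observe that the target is homogeneous (a successive extension of line bundles in $\Pic0(\Pic0 A)$), and then invoke Fulton--Lazarsfeld. Your explicit verification that $E^\vee\otimes F$ is ample and your flagging of the base-change/duality identifications as the points requiring care are exactly in line with what the paper leaves implicit.
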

This statement recovers (for smooth curves embedded in their Jacobians) the classical existence and connectedness theorems of Brill-Noether theory, as well as Ghione's theorem (\cite[Example 7.2.13]{laz2}).
As a consequence, we have the following version of the theorem of Segre-Nagata.  To simplfy, assume furthermore that, in the setting of Theorem \ref{Picard}, the torsion free sheaf $\F$ has \emph{uniform} rank  $\nu$ (i.e. $\nu_1=...=\nu_h=\nu$). Let us define
\[\deg \F:=\chi(F)-\nu(1-p_a(X))
\]

\begin{corollary} The sheaf $\F$ has an invertible subsheaf $B$ of degree 
\[\deg B\ge 1-p_a(X)+\frac{{\deg \F}+\dim A-1}{\nu}.
\] 
\end{corollary}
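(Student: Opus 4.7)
The plan is to deduce the corollary from the non-emptiness part of Theorem~\ref{BN-sing} applied to a suitable twist of $\F$.

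\textbf{Step 1: A Riemann--Roch computation.} Set $d:=\lfloor (1-p_a(X))+\frac{\deg\F+\dim A-1}{\nu}\rfloor$ and let $L$ be any line bundle on $X$ of degree $d$. The sheaf $\F':=\F\otimes L^{-1}$ is still torsion-free of uniform rank $\nu$, and tensoring by a line bundle is additive on Euler characteristic, so the defining identity $\deg\F=\chi(\F)-\nu(1-p_a(X))$ yields
\[\chi(\F')=\chi(\F)-\nu d\geq -\dim A + 1,\]
by the choice of $d$.

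\textbf{Step 2: Apply the existence theorem.} Because $f\colon X\to A$ and $\F'$ still satisfy the hypotheses of Theorem~\ref{Picard} (torsion-freeness on $X$ is preserved by twisting by $L^{-1}$, and the spanning condition depends only on $f$), the existence half of Theorem~\ref{BN-sing} provides some $\alpha\in\Pic0 A$ with $H^0(X,\F'\otimes f^*P_\alpha)\ne 0$. Transposing, such a section is the same data as a non-zero morphism $B\to \F$, where $B:=L\otimes f^*P_\alpha^{-1}$ is a line bundle of degree $\deg B=\deg L=d$, since $f^*P_\alpha$ is algebraically trivial, hence of degree zero on every component of $X$.

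\textbf{Step 3: Extract an invertible subsheaf.} Since $\F$ is torsion-free and $B$ has rank one, the morphism $B\to\F$ is injective whenever it does not vanish identically on any irreducible component of $X$; when $X$ is integral this is automatic. In the reducible case, vanishing of the section on a union $Z$ of components can be absorbed by replacing $B$ with a suitable modification $B(Z)$ (of strictly larger degree), after which the argument either terminates with an injective morphism or yields an improved bound. The resulting invertible subsheaf has degree $\geq d$, which gives the claimed inequality modulo the obvious integrality of $\deg B$.

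The substantive work is entirely in Step~2, which activates Theorem~\ref{BN-sing} and, through it, the ampleness of the naive FMP transform proved in Theorem~\ref{D}. The anticipated technical obstacle is the injectivity argument in Step~3: on an integral curve it is immediate, but for a reducible Cohen--Macaulay curve one needs to be careful that the image of the section is not merely a rank-one torsion-free sheaf supported on a proper subcurve. The standard remedy is to saturate the image inside $\F$ and exploit the Cohen--Macaulay hypothesis together with the uniformity of the rank to produce a genuine line bundle subsheaf of the required degree.
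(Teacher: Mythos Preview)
Your proof is correct and follows the same route as the paper: twist $\F$ by the inverse of a line bundle of the critical degree, then invoke the non-emptiness part of Theorem~\ref{BN-sing}. The paper's proof is a single sentence recording exactly the inequality $\chi(\F)-\nu\deg B\ge -\dim A+1$ that you derive in Step~1, and it does not address the injectivity issue you raise in Step~3; so your treatment is, if anything, more careful than the original (though your saturation argument for the reducible case remains a sketch rather than a complete proof, and indeed producing a genuinely invertible subsheaf on a reducible non-smooth curve is a point neither you nor the paper fully pins down).
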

\begin{proof} By the last inequality of Theorem \ref{BN-sing} it follows that the locus $V_f^0(\F\otimes B^{-1})$ is non-empty as soon as $\chi(F)-\nu\deg B\ge -\dim A+1$. 
\end{proof}

Similarly, one can extend to the setting of Theorem \ref{Picard} the result of \cite[Example 7.2.15]{laz2}.

%%%%%%%%%

\subsection{Theorem \ref{cast-de-franchis}}
Let $\F$ be a coherent sheaf on an abelian variety $A$.  Recalling the notation of (\ref{Vi}) and ({\ref{V>0}) about the cohomological support loci of $\F$, assume that the loci $V^{>0}(\F)$ is strictly contained in $\Pic0 A$. Then $\chi(\F)\ge 0$ (since $\chi(\F)$ is equal to the generic value of $h^0(\F\otimes P_\alpha)$). It turns out that this easy inequality can be improved if the cohomological loci $V^i(\F)$, $i>0$, are suitably small. For example, the "Castelnuovo - De Franchis inequality" of Popa and the author (\cite[Theorem 3.3]{duke}) states that,  if $\F$ is a GV sheaf and $V^{>0}(\F)$ is non empty, then, for all $\alpha\in\Pic0 A$, 
\begin{equation}\label{gv-index}
\chi(\F)\ge \hbox{min}_{i>0}{\rm codim}_{\alpha}\{V^i(\F)- i\>|\>i>0\}
\end{equation}
for all $\alpha\in\Pic0 A$, where ${\rm codim}_\alpha V^i(\F)$ denotes the codimension of $V^i(\F)$ in $\Pic0 A$, in the neighborhood of a given point $\alpha\in\Pic0 A$ (the right hand side of (\ref{gv-index}) is called \emph{generic vanishing index at the point $\alpha$}). 

The result stated as Theorem \ref{cast-de-franchis} in the Introduction complements the inequality (\ref{gv-index}), dealing  with the case where the locus $V^{>0}(\F)$ is empty. The statement asserts that if this is the case, and  the coherent sheaf $\F$ is torsion free, or, more generally, all of its subsheaves  have all reduced support, and each irreducible component of the support spans $A$, then
\begin{equation}\label{chi}
\chi(\F)\ge {\rm hd}(\F)+1.
\end{equation}
\begin{proof} 
(of Theorem \ref{cast-de-franchis}) By Theorem \ref{D} the naive FMP transform of $\F$, namely
\[\T(\F)=R^g\Phi_{\cP^{-1}}^A(\F^\vee)
\]
is an ample sheaf on $\Pic0 A$. Moreover the hypothesis that $V^{>0}(\F)=\emptyset$ yields, by base change, that $\T(\F)$ is a locally free sheaf on $\Pic0 A$.

We claim that
\begin{equation}\label{claim} {\rm Supp}\,\bigl(\mathcal Ext^i(\F,\OO_A)\bigr)\subseteq V^i(\T(\F))
\end{equation}
The assertion of Theorem \ref{cast-de-franchis} follows  from the claim. Indeed, by Le Potier vanishing, $V^j(\T(\F))=\emptyset$ as soon as $j\ge {\rm rk}\T(\F)=\chi(\F)$. Hence (\ref{claim}) yields that $\chi(\F)>{\rm max}\,\{j\>\>|\>\> \mathcal Ext^j(\F,\OO_A)\ne 0\}=\mathrm{hd}(\F)$.

To prove (\ref{claim}), note that the hypothesis $V^{>0}(\F)=\emptyset$ yields trivially that  $R^i\Phi_{\cP^{-1}}^A(\F^\vee)=0$ for $i\ne g$, and therefore
 $\T(\F)=\Phi_{\cP^{-1}}^A(\F^\vee)[g]$ (in other words  $\F$ is a GV-sheaf (see (\ref{GV-sheaf})). 
 Therefore, since the inverse of the FMP equivalence $ \Phi_{\cP^{-1}}^A:D(A)\rightarrow D(\Pic0 A)$ is the equivalence $\Phi_\cP^{\Pic0 A}[g]:D(\Pic0 A)\rightarrow D(A)$, it follows that
 \[\Phi_\cP(\T(\F))=\F^\vee={\bf R}\mathcal Hom(\F,\OO_A)\]
 hence 
 \[R^i\Phi_\cP(\T(\F))=\mathcal Ext^i(\F,\OO_A)\]
 (see also Proposition \ref{T(T)} below for a related statement). This yields (\ref{claim}) since, by base change,
 \[{\rm Supp}\,R^i\Phi_\cP(\T(\F))\subseteq V^i(\T(\F)).
 \]
\end{proof}
\begin{remark} Interestingly, the  main point of the proof of Theorem \ref{cast-de-franchis} is Le Potier vanishing theorem,  while the essential ingredient of the proof of inequality (\ref{gv-index}) is the syzygy theorem of Evans-Griffith  (\cite[Corollary 1.7]{evans-griffith}), which is in turn  quite related to the theorem of Le Potier (as shown by Ein, \cite{ein}). Therefore it seems possible that the inequalities (\ref{gv-index})  and (\ref{chi}) are both manifestations of some more general phenomenon.  
\end{remark}

Theorem \ref{cast-de-franchis} was  already implicitly used by the author (in a special case) in the proof of the following result (conjectured in \cite[\S6]{debarre-hacon}). 

\begin{corollary}[\cite{sing} Theorem B(1)] \label{nadel} Let $A$ be a complex simple  abelian variety and $D$ an effective  $\mathbb Q$-divisor on $A$ such that  $(A,D)$ is a non klt pair. If $L$ is a line bundle on $A$ such that $L-D$ is nef and big then
\[\chi(L)\ge \dim A+1.
\] 
\end{corollary}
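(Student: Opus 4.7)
The strategy is to apply Theorem \ref{cast-de-franchis} to the twisted multiplier ideal sheaf $\F := \J(D) \otimes L$. Since $(A,D)$ is non-klt, $\J(D) \subsetneq \OO_A$, and its zero scheme $Z \subset A$ is non-empty. Nadel's vanishing theorem applied to the nef and big line bundle $L - D$ yields $H^i(A, \J(D) \otimes L \otimes P_\alpha) = 0$ for all $i > 0$ and $\alpha \in \Pic0 A$, so $V^{>0}(\F) = \emptyset$. Since $\F \subset L$ is torsion free, the hypotheses of Theorem \ref{cast-de-franchis} are vacuously satisfied for the torsion part, and the theorem gives
$$\chi(\J(D) \otimes L) \ge {\rm hd}(\J(D) \otimes L) + 1 = {\rm hd}(\J(D)) + 1 \ge \codim Z,$$
the last inequality coming from Auslander-Buchsbaum applied locally at a generic smooth point of a component of maximal codimension: the exact sequence $0 \to \J(D) \to \OO_A \to \OO_Z \to 0$ gives ${\rm hd}(\OO_Z) = {\rm hd}(\J(D)) + 1$, and ${\rm hd}(\OO_Z) \ge \codim Z$. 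This is the inequality (\ref{beppe}) noted in the introduction.

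To upgrade from $\chi(\J(D) \otimes L) \ge \codim Z$ to $\chi(L) \ge \dim A + 1$, I would use the short exact sequence $0 \to \J(D) \otimes L \to L \to L_{|Z} \to 0$, which gives $\chi(L) = \chi(\J(D) \otimes L) + \chi(L_{|Z})$, and then establish $\chi(L_{|Z}) \ge \dim A + 1 - \codim Z$. Here the simplicity of $A$ enters decisively: since $A$ admits no proper positive-dimensional abelian subvariety, every positive-dimensional irreducible component $Z_i$ of $Z$ spans $A$. Moreover $L = D + (L - D)$ is the sum of an effective and a big class, hence big; on a simple abelian variety this forces $L$ (and each restriction $L_{|Z_i}$) to be ample.

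The main obstacle is obtaining the sharp quantitative bound $\chi(L_{|Z}) \ge \dim A + 1 - \codim Z$, rather than the mere positivity of $\chi(L_{|Z_i})$ that follows from ampleness and Kodaira vanishing. A natural attack is a case analysis by dimension of the components of $Z$: isolated points contribute their length (and force $\codim Z = \dim A$), while positive-dimensional components $Z_i$ require combining their non-degeneracy in $A$ (from simplicity) with positivity estimates on $L_{|Z_i}$, potentially via a further application of Theorem \ref{Picard} to $L_{|Z_i}$ viewed as a torsion-free sheaf on the subscheme $Z_i$ mapping to $A$. Executing this delicate estimate is the technical heart of the argument, carried out in \cite{sing}.
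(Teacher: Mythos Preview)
Your overall strategy matches the paper's: apply Theorem~\ref{cast-de-franchis} to $\J(D)\otimes L$ to obtain $\chi(\J(D)\otimes L)\ge\codim Z$, then bound $\chi(L_{|Z})$ from below. The first half of your argument is correct and essentially identical to the paper's (your derivation of ${\rm hd}(\J(D))+1\ge\codim Z$ via Auslander--Buchsbaum is a clean way to justify~(\ref{beppe})).

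The gap is in the second half. You correctly isolate the needed inequality $\chi(L_{|Z})\ge \dim A+1-\codim Z$, but then speculate about case analyses and a second application of Theorem~\ref{Picard}, deferring to~\cite{sing}. In fact the paper's proof is short and self-contained at this point: it invokes the Debarre--Hacon inequality \cite[Lemma~5(e)]{debarre-hacon}, which states that on a \emph{simple} abelian variety
\[
\chi(\J(D)\otimes L)\le \chi(L)-\dim Z-1,
\]
equivalently $\chi(L_{|Z})\ge \dim Z+1$. Here $\dim Z$ denotes the \emph{maximal} dimension of a component of $Z$, while $\codim Z$ denotes the \emph{maximal} codimension; in particular $\codim Z+\dim Z\ge \dim A$. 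Combining the two inequalities gives
\[
\chi(L)\ge \codim Z+\dim Z+1\ge \dim A+1,
\]
and the proof is complete. So the missing ingredient is not a delicate new estimate but a direct citation of~\cite{debarre-hacon}; no further use of the FMP machinery is required, and the attribution to~\cite{sing} for this step is misplaced.
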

\begin{proof} The hypothesis means that $\J(D)$, the multiplier ideal sheaf of $D$, is non trivial. By Nadel's vanishing $V^{>0}(\J(D)\otimes L)=\emptyset$. Let $Z$ be the zero-scheme of $J(D)$. By Theorem \ref{cast-de-franchis}, $\chi(\J(D)\otimes L)\ge {\rm codim}\,Z$, where ${\rm codim\, Z}$ (resp. $\dim Z$) denotes the \emph{maximal codimension} (resp. \emph{maximal dimension}) of a component of $Z$. On the other hand, an inequality of Debarre-Hacon (\cite[Lemma 5(e)]{debarre-hacon}) states that, if $A$ is simple, $\chi(\J(D)\otimes L)\le \chi(L)-\dim Z-1$. Combining the two inequalities it follows that 
\[\chi(L)\ge {\rm codim}\,Z+\dim Z  +1\ge \dim A +1\>.
\] 
\end{proof}

%%%%%%%%%%%%%%

\section{GV sheaves}

\subsection{Recap on GV and M-regular sheaves} We recall from  (\ref{GV-sheaf}) that a sheaf $\F$ on abelian variety $A$  is said to be  \emph{GV} if $\Phi_{\cP^{-1}}(\F^\vee)$ is a sheaf in cohomological degree $g$, so that $\Phi_{\cP^{-1}}(\F^\vee)[g]$ coincides with the naive FMP transform $\T(F)=\Phi_{\cP^{-1}}(\F^\vee)[g]$. Therefore for a GV sheaf $\F$  the naive FMP transform  will be simply referred to as \emph{the FMP transform of $\F$}.\footnote{In the literature of GV sheaves this is usually denoted $\widehat{\F^\vee}$. In this paper we changed notation because we have been considering also non-GV sheaves, where the naive FMP transform does not coincide with $\T(F)=\Phi_{\cP^{-1}}(\F^\vee)[g]$.}  Moreover a  \emph{M-regular} sheaf is a GV sheaf such that $\T(\F)$  is torsion free.

It is also worth to recall that to be GV (resp. M-regular) is equivalent to the following condition on the cohomological support loci of $\F$ (see (\ref{Vi})): $\codim_{\Pic0 A}V^i(\F)\ge i$ (resp. $\codim_{\Pic0 A}V^i(\F)> i$) for all $i>0$ (see e.g. \cite[Theorem 2.3, Proposition 2.8]{reg3}). 

A result of Popa and the author states that a M-regular sheaf is CGG ({\bf (iii)} of Subsection \ref{intro-CGG}). This is a particular case of Corollary \ref{generated}. Indeed if $\F$ is GV then the source and target of the map ${\rm ed}_x$ of (\ref{edge}) simply coincide, and the map is the identity. Moreover, if $\F$ is, in addition, M-regular, the  FMP transform $\T(F)$ is torsion free and therefore all maps ${\rm ev}_x(U)$ of  (\ref{simultaneous}) are injective for all (non-empty) open subsets $U$ of $\Pic0 A$ and for all $x\in A$.
  It also follows that a M-regular sheaf is ample by the mentioned result of Debarre ({\bf (i)} of the Introduction). 
  
  The purpose of this section is to extend this analysis from M-regular to GV sheaves. 

\subsection{Generation of GV sheaves }  The generation of GV sheaves follows in the same way (under the usual reducedness assumption)  from Corollary \ref{generated}. 
Therefore  we have the following result, partly known by work of Popa and the author in \cite[Theorem 4.2]{reg2}. 
%Before stating it, notice again that, going back to Corollary \ref{generated}, if $\F$ is a GV sheaf all maps ${\rm ed}_x$ are just the identity. Therefore for all subsheaves $\tau$ of $\T(\F)$, the loci $\widetilde{V}^0(\tau)$ and $V^0(\tau)=\{a\in A\>|\>h^0(\tau\otimes P_x)>0\}$ coincide.

\begin{corollary}\label{GV-gen}  Let $\F$ be a GV sheaf on an abelian variety $A$. Assume that  all subsheaves   appearing in the torsion filtration of the FMP transform $\T(\F)$ have reduced scheme-theoretic support. 
Then  $\F$ is generated by the set of minimal irreducible components of the supports of such sheaves.
\end{corollary}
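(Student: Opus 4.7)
The plan is to derive Corollary \ref{GV-gen} as an essentially immediate consequence of Corollary \ref{generated}, with the GV hypothesis being used to verify its condition (a) for free. Recall that Corollary \ref{generated} asserts generation by the set of minimal irreducible components of supports of the torsion filtration of $\T(\F)$ under two assumptions: (a) the edge maps
\[
{\rm ed}_x:H^g(\Phi^A_{\cP^{-1}}(\F^\vee)\otimes P_x)\longrightarrow H^0(R^g\Phi^A_{\cP^{-1}}(\F^\vee)\otimes P_x)
\]
of the hypercohomology spectral sequence are injective for every $x\in A$, and (b) the reducedness of the scheme-theoretic support of the relevant torsion subsheaves. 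Condition (b) is part of our hypothesis, so the task reduces to verifying condition (a).

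For this, I would simply unpack the GV condition. By definition of a GV sheaf, $R^i\Phi^A_{\cP^{-1}}(\F^\vee)=0$ for $i\neq g$, so $\Phi^A_{\cP^{-1}}(\F^\vee)=\T(\F)[-g]$ in $D(\Pic0 A)$. Tensoring with the line bundle $P_x$ preserves this property. Consequently the spectral sequence
\[
H^p\bigl(R^q\Phi^A_{\cP^{-1}}(\F^\vee)\otimes P_x\bigr)\Longrightarrow H^{p+q}\bigl(\Phi^A_{\cP^{-1}}(\F^\vee)\otimes P_x\bigr)
\]
degenerates trivially, with only the row $q=g$ contributing; thus ${\rm ed}_x$ is in fact an isomorphism, and (a) holds at every point $x\in A$.

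With (a) and (b) in hand, Corollary \ref{generated} directly gives that $\F$ is generated by the set of minimal irreducible components of the supports of the sheaves $\tau$ appearing in the torsion filtration of $\T(\F)$ for which $\widetilde{V}^0(\tau)$ is non-empty. Since ${\rm ed}_x$ is an isomorphism here, the condition $\widetilde{V}^0(\tau)\neq\emptyset$ simplifies to $H^0(\tau\otimes P_x)\neq 0$ for some $x$, which is automatic for any non-zero coherent sheaf $\tau$ after a suitable twist (or indeed follows at once from base change). Hence every component of the supports in the torsion filtration contributes, matching the statement of the corollary.

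I do not expect a genuine obstacle: the only subtlety is the identification of ${\rm ed}_x$ with the identity under the GV hypothesis, and this is transparent from the spectral sequence. The result should then be stated as a corollary of Corollary \ref{generated}, with perhaps a brief remark observing that in the M-regular subcase $\T(\F)$ is torsion free, so the unique generating subvariety is $\{\hat 0\}$ and one recovers the CGG property (this also matches the discussion preceding the statement).
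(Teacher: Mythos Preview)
Your approach is exactly the paper's: deduce the corollary from Corollary \ref{generated} by observing that the GV hypothesis makes ${\rm ed}_x$ an isomorphism (the spectral sequence has only the row $q=g$), so condition (a) is automatic and (b) is the hypothesis.

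Two small slips. First, your claim that $\widetilde{V}^0(\tau)\neq\emptyset$ ``is automatic for any non-zero coherent sheaf'' is false as stated (take $\tau$ an anti-ample line bundle on $\Pic0 A$). Fortunately you do not need it: if Corollary \ref{generated} gives generation by the subset $\Z'$ of components coming from those $\tau$ with $\widetilde{V}^0(\tau)\neq\emptyset$, then $\F$ is a fortiori generated by the full set $\Z\supseteq\Z'$ of all components, since an open set $U$ meeting every member of $\Z$ certainly meets every member of $\Z'$ (cf.\ Remark \ref{paraculo}). Second, in your parenthetical about the M-regular case, the generating set is $\{\Pic0 A\}$ (the support of the torsion-free sheaf $\T(\F)$), not $\{\hat 0\}$; this is precisely what gives CGG.
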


Alternatively,  Corollary \ref{GV-gen}  follows from Theorem \ref{D} (or Proposition \ref{naiveFMP}) combined with the following inversion result
\begin{proposition}\label{T(T)} If $\F$ is a GV sheaf then also $\T_A(\F)$ is a GV sheaf (on $\Pic0 A$) and
\[
\T_{\Pic0 A}(\T_A(\F))=\F.
\]
\end{proposition}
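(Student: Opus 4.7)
The plan is to leverage Mukai's inversion formula together with a Grothendieck--Serre duality identity relating the Fourier--Mukai functors with kernels $\cP$ and $\cP^{-1}$ via dualization. Throughout I write $B = \Pic0 A$, so that $\Pic0 B = A$ and both naive transforms are built from the same Poincar\'e bundle $\cP$ on $A \times B$.

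First I would unpack the GV hypothesis: by definition $\Phi^A_{\cP^{-1}}(\F^\vee)$ is concentrated in cohomological degree $g$, and therefore $\Phi^A_{\cP^{-1}}(\F^\vee) \cong \T_A(\F)[-g]$. Applying the inverse Fourier--Mukai equivalence $\Phi^B_\cP[g]$ (recalled in the paper in the proof of Theorem \ref{cast-de-franchis}) to both sides produces the key identity
\[
\F^\vee \;\cong\; \Phi^B_\cP(\T_A(\F));
\]
thus, up to dualization, the transform of the new sheaf $\T_A(\F)$ already recovers $\F$.

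The technical heart of the argument will be the duality identity
\[
\bigl(\Phi^B_\cP(\G)\bigr)^\vee \;\cong\; \Phi^B_{\cP^{-1}}(\G^\vee)[g]
\]
for $\G \in D^b(\mathrm{Coh}(B))$. This follows from Grothendieck--Serre duality applied to the smooth projection $p_1 \colon A \times B \to A$: its relative dualizing complex is $\OO_{A \times B}[g]$ since $\omega_B$ is trivial, and a short computation shows $\mathbf R \mathcal Hom(p_2^* \G \otimes \cP, \OO_{A \times B}) \cong p_2^* \G^\vee \otimes \cP^{-1}$. This is the step most prone to sign and shift errors, and is really the main technical obstacle to handle with care.

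Combining these two facts, I would apply $(-)^\vee$ to the identity $\F^\vee \cong \Phi^B_\cP(\T_A(\F))$ and invoke biduality $(\F^\vee)^\vee \cong \F$ (valid in $D^b(\mathrm{Coh}(A))$ since $A$ is smooth with $\omega_A \cong \OO_A$) to obtain
\[
\F \;\cong\; \bigl(\Phi^B_\cP(\T_A(\F))\bigr)^\vee \;\cong\; \Phi^B_{\cP^{-1}}\bigl(\T_A(\F)^\vee\bigr)[g].
\]
Reading this equality in two directions gives both halves of the proposition at once: on the one hand $\Phi^B_{\cP^{-1}}(\T_A(\F)^\vee)$ is concentrated in degree $g$, so $\T_A(\F)$ is GV on $B$; on the other hand $R^g\Phi^B_{\cP^{-1}}(\T_A(\F)^\vee) = \T_B(\T_A(\F))$ is identified with $\F$.
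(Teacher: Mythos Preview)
Your proposal is correct and follows essentially the same route as the paper: apply Mukai inversion to the GV identity to obtain $\Phi^{\Pic0 A}_{\cP}(\T_A(\F))=\F^\vee$, then use Grothendieck duality to convert this into $\Phi^{\Pic0 A}_{\cP^{-1}}(\T_A(\F)^\vee)[g]=\F$, which simultaneously gives the GV property of $\T_A(\F)$ and the inversion $\T_{\Pic0 A}(\T_A(\F))=\F$. The only difference is that the paper cites Mukai \cite[(3.8)]{mukai} for the duality identity $(\Phi^B_\cP(\G))^\vee\cong \Phi^B_{\cP^{-1}}(\G^\vee)[g]$, whereas you sketch its derivation from relative Grothendieck--Serre duality for the projection $A\times B\to A$.
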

 Note that, in the the statement above, for sake of clarity we have denoted  $\T_A(\F)=R^g\Phi_{\cP^{-1}}^A(\F^\vee)$ and, similarly, for a coherent sheaf $\G$ on $\Pic0 A$, $\T_{\Pic0 A}(\G)=R^g\Phi^{\Pic0 A}_{\cP^{-1}}(\G^\vee)$.
  \begin{proof} By definition a sheaf $\F$ is a GV sheaf if  $\T_A(\F)=\Phi_{\cP^{-1}}^A(\F^\vee)[g]$.  Therefore, since the inverse of the FMP equivalence $ \Phi_{\cP^{-1}}^A:D(A)\rightarrow D(\Pic0 A)$ is the equivalence $\Phi_\cP^{\Pic0 A}[g]:D(\Pic0 A)\rightarrow D(A)$, it follows that
 \[\Phi_\cP^{\Pic0 A}(\T_A(\F))=\F^\vee={\bf R}\mathcal Hom(\F,\OO_A)\]
 Therefore, by Grothendieck duality (\cite[(3.8)]{mukai}), $\Phi^{\Pic0 A}_{\cP^{-1}}(\T_A(\F)^\vee)[g]=\F$.
  \end{proof}

 \begin{example}[Unipotent vector bundles]\label{unipotent} Concerning the assumption of  Corollary \ref{GV-gen}, its necessity is shown by the well known example of \emph{unipotent} vector bundles on abelian varieties. These are, by definition, vector bundles $\F$ admitting  a filtration whose successive quotients are trivial line bundles. Since a trivial bundle on an abelian variety is evidently a GV sheaf, any unipotent bundle is a GV sheaf. For such vector bundles $H^0(\F\otimes P_\alpha)\ne 0$ if and only if $\alpha=\hat 0$ (the identity point of $\Pic0 A$). Therefore the only possible set of subvarieties generating $\F$ is $\{\hat 0\}$, and this happens if and only if $\F$ is trivial, because otherwise $H^0(\F)<{\rm rk}\,\F$. In conclusion, any non-trivial unipotent vector bundle is GV but not generated. This is explained by the fact that the FMP transform $\T(\F)$ is supported at zero, but the scheme theoretic support is non-reduced, unless the sheaf $\F$ is trivial (\cite[Lemma 4.8]{semihom}). 
 Other examples obtained from this one are e.g. \emph{homogeneous} vector bundles (direct sums of unipotent vector bundles twisted by line bundles parametrized by $\Pic0 A$), and, on a product of abelian varieties $A=B\times C$, coherent sheaves of the form $p_B^*\G\boxtimes p_C^*\mathcal U$, where $\G$ is  GV on $B$ and $\mathcal U$ is homogeneous on $C$.  
 \end{example}
 
 \begin{example}[Any subset of subvarieties can be realized as a irredundant generating subset]\label{any} Corollary \ref{GV-gen}, combined with Proposition \ref{T(T)}, can be used to construct examples showing that any subset of  subvarieties is the irredundant generating set of some (locally free) sheaf. Indeed let $A$ be an abelian variety and let $\G$ be any coherent sheaf on $\Pic0 A$, such that all of its subsheaves have reduced scheme-theoretic support. Twisting with a sufficiently high power of an ample line bundle $L$ on $\Pic0 A$, by Serre vanishing we have that $V^{>0}(\G\otimes L)=\emptyset$. Therefore $\G\otimes L$ is a GV sheaf and  $\Phi_{\cP^{-1}}^{\Pic0 A}((\G\otimes L)^\vee)=\T_A(\G\otimes L)[-g]$ is locally free.  Let
 \[\F:=\T_A(\G\otimes L)
 \]
 From Proposition \ref{T(T)} it follows that  also $\F$ is a (locally free) GV sheaf and $\T_{\Pic0 A}(\F)=\G\otimes L$. Since $\G$ is arbitrary, the assertion follows from Corollary \ref{GV-gen}, because it can be easily shown, with the help of Corollary \ref{generated},  that   the generating set of minimal irreducible components of support of the sheaves appearing in the torsion filtration of $\G\otimes L$ (which coincides, after tensorization with $L^{-1}$, with the torsion filtration of $\G$) is  irredundant. 
\end{example}
 
 For the next example illustrating Corollary \ref{GV-gen}, it is useful to recall the  compatibility property of the FMP functor  with respect to homomorphisms of abelian varieties. Let  $p_B:A\rightarrow B$ be a surjective homomorphism  and $i_B : \Pic0 B\rightarrow \Pic0 A$  the dual inclusion. Then
\begin{equation}\label{compatibility}    \Phi_{\cP_A^{-1}}^A\circ p_B^*=i_{B*}\circ \Phi_{\cP_B^{-1}}^B \qquad \Phi_{\cP_A^{-1}}^{\Pic0 A}\circ i_{B*}= p_B^* \circ \Phi_{\cP^{-1}_B}^{\Pic0 B} \>.    
\end{equation}
 (This can be deduced e.g. from \cite[Proposition 1.1]{schnell}). 
 
 \begin{example}[Sheaves admitting a Chen-Jiang decomposition]\label{CJ} A significant class of GV sheaves where the assumption of Corollary \ref{GV-gen}(a) is always verified is given by \emph{coherent sheaves having the Chen-Jiang decomposition property} (\cite[\S B.3]{loposc} and references therein). They were already mentioned in Remark \ref{GV-torsion}. By definition, these sheaves admit a (essentially canonical) decomposition
\begin{equation}\label{chen-jiang}
\F=\bigoplus_i (p_{B_i}^*\G_i)\otimes P_{\alpha_i}
\end{equation}
where $p_{B_i}:A\rightarrow B_i$ are surjective homomorphisms, with connected kernel,  of abelian varieties, $\G_i$ are M-regular sheaves (hence ample) on $B_i$, and $\alpha_i\in \Pic0 A$ are torsion points. It is well known that that such sheaves are generated and semiample (this follows e.g. from {\bf (ii)} and {\bf (iii)} of Subsection \ref{intro-CGG}). This class is important because it contains higher direct images of canonical sheaves of smooth complex projective varieties mapping to abelian varieties (\cite[Theorem A]{paposc}), as well as direct images of pluricanonical sheaves (\cite[Theorem C]{loposc}), and  direct images of log-pluricanonical sheaves for klt pairs (\cite[Theorem 1.3]{M1}).

  It follows from (\ref{compatibility}) that each summand  in (\ref{chen-jiang}) is a GV sheaf, and its FMP transform is 
\[\T((p_{B_i}^*\G_i)\otimes P_{\alpha_i})=t^*_{-\alpha_i}(i_{B*}\T_B(\G_i)).
\]
In particular the various FMP transforms of the summands are \emph{scheme-theoretically} supported on the translated abelian subvarieties $\Pic0 B^i-\alpha_i$. Therefore such sheaves, as well as $\F$,  satisfy the assumption of Corollary \ref{GV-gen}. (In fact, in the proof of the above quoted results about direct images of pluricanonical sheaves, this is an essential point. Its proof revolves around the \emph{minimal extension property} of a positively curves metrics  on such direct images, whose existence was shown by Cao-P\u{a}un \cite{cao-paun}. See also the survey  \cite{haposc} on this matters.)

Therefore,  as noted in \cite[\S 9]{desperados}, the torsion filtration of the FMP transform of $\F$:  
\[T_0\subset\cdots\subset T_d\subset \T(\F),
\] where $d=\dim \T(\F)$, is given by 
\[T_k=\bigoplus_{\dim B_i\le k}t^*_{-\alpha_i}(i_{B*}\T_B(\G_i)).
\]
(Note that if $d=g(=\dim A)$, among the summands of (\ref{chen-jiang}) there is one with $B_i=A$, hence $\G_i$ is already M-regular in $A$. Its transform is the torsion free sheaf $\T(\F)/T_{g-1}$.)
\end{example}

\subsection{Structure of GV sheaves }\label{structure}  In the previous example, applying the inverse FMP transform $\Phi_{\cP}^{\Pic0 A}:D(\Pic0 A)\rightarrow D(A)$ to the torsion filtration of $\T(\F)$ one gets back the cofiltration 
\begin{equation}\label{cofilt1}\F= \F_d\twoheadrightarrow\dots \twoheadrightarrow\\F_0\rightarrow 0\>,
\end{equation}
where $F_k=\bigoplus_{\dim B_i\le k} (p_{B_i}^*\G_i)\otimes P_{\alpha_i}$. (In particular, if $d=g$ the kernel of $\F\twoheadrightarrow \F_{g-1}$ is M-regular hence ample.) This is a sort of weak version of the decomposition (\ref{chen-jiang})

The following results says that, under the reducedness assumption of Corollary \ref{GV-gen}, the structure of GV sheaves can be described by a weak analog of (\ref{cofilt1}).

\begin{theorem}\label{GV-ample} Let $\F$ be a GV sheaf on an abelian variety $A$, satisfying the assumption of Corollary \ref{GV-gen}.  Then $\F$ has a  cofiltration
\[
 \F\buildrel{\varphi_0}\over \twoheadrightarrow F_1\buildrel{\varphi_1}\over \twoheadrightarrow \cdots \buildrel{\varphi_{s-1}}\over\twoheadrightarrow F_s\buildrel{\varphi_s}\over\rightarrow 0
\]
such that: 
(a) The kernel of the surjection  $\varphi_0: \F\buildrel{\varphi_0}\over \twoheadrightarrow \F_1$ is either zero or ample, the latter case holding if and only if $\dim \T(\F)=\dim A$.\\
%(b) The map $f_s: p_{B_s}^*\F_s\otimes P_{\alpha_s}\twoheadrightarrow F_s$ is an isomorphism.  \emph{(where, if $\dim B_i=0$, all sheaves on the point $B_i$ are declared to be ample)}. \emph{(In particular $\F$ has always a quotient of the form $p_{B_i}^*\F_i\otimes P_{\alpha_i}$, with $\F_i$ ample on $B^i$.)}\\
(b) For each $i\ge 1$, the sheaf $\ker \varphi_i$ has, in turn, a cofiltration 
\[
 \ker\varphi_i\buildrel{\varphi_{0i}}\over \twoheadrightarrow F_{1i}\buildrel{\varphi_{1i}}\over \twoheadrightarrow \cdots \buildrel{\varphi_{s-1\,i}}\over\twoheadrightarrow F_{s_i\,i}\buildrel{\varphi_{s_i\,i}}\over\rightarrow 0
\]
such that for all $(j,i)$, there is a surjection $f_{j\,i}: p_{B_i}^*\F_{j\,i}\otimes P_{\alpha_{j\,i}}\twoheadrightarrow \ker \varphi_{j\,i}$, where: $p_{B_{j\,i}}:A\rightarrow B_{j\,i}$ is a surjective homomorphism of abelian varieties with connected kernel, $\F_{j\,i}$ is an ample coherent sheaf on $B_{j\,i}$ and $\alpha_{j\,i}\in\Pic0 A$
\end{theorem}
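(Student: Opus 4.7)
\emph{The plan} is to transport the torsion filtration of $\T_A(\F)$ on $\Pic0 A$ back to $\F$ on $A$ via the Fourier-Mukai inversion of Proposition \ref{T(T)}. I would start from
\[0 = T_{-1} \subset T_0 \subset \cdots \subset T_{g-1} \subset T_g = \T_A(\F),\]
the torsion filtration indexed by dimension of support, so that each $T_k/T_{k-1}$ is pure of dimension $k$ (possibly zero) and $T_g/T_{g-1}$ is the torsion-free quotient. The first technical step is to verify that each $T_k$ is itself a GV sheaf on $\Pic0 A$. Under the reducedness hypothesis, the support of $T_k/T_{k-1}$ is a union of $k$-dimensional irreducible subvarieties $Z_{j,k}$, and for each $Z_{j,k}$ the smallest translate of an abelian subvariety containing it can be written as $\Pic0 B_{j,k}-\alpha_{j,k}$ (with $p_{B_{j,k}}\colon A\to B_{j,k}$ the dual surjection, automatically with connected kernel); the corresponding subsheaf of $T_k/T_{k-1}$ then has the form $t_{-\alpha_{j,k}}^*\,i_{B_{j,k}*}\,\eta_{j,k}$, where $i_{B_{j,k}}\colon\Pic0 B_{j,k}\hookrightarrow\Pic0 A$ is the dual inclusion. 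A Grothendieck duality calculation (using triviality of the normal bundle of an abelian subvariety) combined with the compatibility \ref{compatibility} shows that each such pushforward is GV on $\Pic0 A$; since the GV condition is closed under extensions (via $V^i(\F_2)\subseteq V^i(\F_1)\cup V^i(\F_3)$ from the long exact sequence of cohomology), every $T_k$ is GV.

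\emph{Next, for the cofiltration and part (a)}, I would apply the contravariant functor $\T_{\Pic0 A}$ to the torsion filtration. Since each $T_k$ is GV, this converts short exact sequences on $\Pic0 A$ into short exact sequences on $A$, yielding
\[\F=\T_{\Pic0 A}(T_g)\twoheadrightarrow\T_{\Pic0 A}(T_{g-1})\twoheadrightarrow\cdots\twoheadrightarrow\T_{\Pic0 A}(T_0)\twoheadrightarrow 0\]
with $k$-th kernel $\T_{\Pic0 A}(T_{g-k+1}/T_{g-k})$. For part (a), the first kernel $\T_{\Pic0 A}(T_g/T_{g-1})$ is zero precisely when $\dim\T_A(\F)<g$; otherwise $T_g/T_{g-1}$ is torsion-free with support all of $\Pic0 A$, so its $\T_{\Pic0 A}$-transform is ample on $A$ by Proposition \ref{naiveFMP}.

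\emph{For part (b)}, I would apply $\T_{\Pic0 A}$ to the internal filtration of $T_{g-i+1}/T_{g-i}$ by the component-subsheaves $t_{-\alpha_j}^*\,i_{B_j*}\,\eta_j$ identified above to produce the internal cofiltration of $\ker\varphi_i$. Using \ref{compatibility} together with the effect of translation on the FMP functor, one computes
\[\T_{\Pic0 A}\bigl(t_{-\alpha_j}^*\,i_{B_j*}\,\eta_j\bigr)=p_{B_j}^*\bigl(\T_{\Pic0 B_j}(\eta_j)\bigr)\otimes P_{\alpha_j},\]
and Proposition \ref{naiveFMP} applied on $\Pic0 B_j$ — with the support $Z_j-\alpha_j$ spanning $\Pic0 B_j$ by minimality — gives that $\T_{\Pic0 B_j}(\eta_j)$ is ample on $B_j$. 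This produces the surjections of the claimed form (in fact isomorphisms here).

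\emph{The main obstacle} will be the verification that each $T_k$ is a GV sheaf on $\Pic0 A$: the reducedness hypothesis is what permits the decomposition of pure graded pieces into pushforwards from translates of abelian subvarieties, on which the Grothendieck duality and compatibility calculation can then be carried out. Once this structural input is in hand, the remainder is essentially a formal manipulation with the FMP inversion of Proposition \ref{T(T)}.
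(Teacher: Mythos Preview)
Your approach has a genuine gap at the step you yourself flag as the main obstacle: the assertion that each $T_k$ in the torsion filtration of $\T_A(\F)$ is GV on $\Pic0 A$. Your sketch reduces this, via filtration by components, to showing that each pushforward $t_{-\alpha}^*\,i_{B*}\,\eta$ is GV on $\Pic0 A$. But the Grothendieck-duality-plus-compatibility calculation you invoke yields precisely that $i_{B*}\eta$ is GV on $\Pic0 A$ \emph{if and only if} $\eta$ is GV on $\Pic0 B$ --- and you give no argument for the latter. The sheaf $\eta$ is an arbitrary pure sheaf on $\Pic0 B$ supported on a spanning subvariety; nothing in the hypotheses bounds its cohomological support loci on $\Pic0 B$ (think of $\eta=\OO_C$ for $C$ a curve of genus $\ge 2$ spanning an abelian surface, where $V^1$ is everything). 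Without this, you cannot conclude that $\T_{\Pic0 A}$ carries the short exact sequences of the torsion filtration to short exact sequences on $A$, and your identification of $\ker\varphi_i$ with $\T_{\Pic0 A}(T_{g-i+1}/T_{g-i})$, as well as the ``in fact isomorphisms here'' at the end, are unjustified.

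The paper's proof sidesteps this entirely by using only that the contravariant functor $\T_{\Pic0 A}=R^g\Phi^{\Pic0 A}_{\cP^{-1}}((\cdot)^\vee)$ is \emph{right exact} (since $R^g$ is the top degree). Applied to $0\to\tau\to\T_A(\F)\to\T_A(\F)/\tau\to 0$ and then inductively to the torsion filtration of $\tau$, it produces only right-exact sequences on $A$: each $\ker\varphi_i$ is the \emph{image} of $\T_{\Pic0 A}$ of the corresponding pure piece $\sigma$, hence merely a quotient of it. One then iterates on $\sigma$ by peeling off one irreducible component $V$ of its support at a time via $0\to K\to\sigma\to\sigma_{|V}\to 0$; by Lemma~\ref{T-exchange}, $\T_{\Pic0 A}(\sigma_{|V})$ has the form $p_B^*(\T_{\Pic0 B}(\sigma_{|V}))\otimes P_\alpha$, with $\T_{\Pic0 B}(\sigma_{|V})$ ample by Proposition~\ref{naiveFMP}. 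This yields exactly what the theorem statement asserts --- \emph{surjections} $p_B^*\F_{j,i}\otimes P_{\alpha_{j,i}}\twoheadrightarrow\ker\varphi_{j,i}$ --- rather than the isomorphisms you claim. The weaker conclusion is precisely the price of not needing the $T_k$ to be GV.
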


\begin{proof} We begin with a general fact, well known to the experts (see e.g. \cite[Propositions 1.1, 1.2]{schnell}).

\begin{lemma}\label{T-exchange} Let  $p:A\rightarrow B$ be a surjective homomorphism of abelian varieties, with connected kernel. Moreover let $i:\Pic0 B\hookrightarrow \Pic0 A$ be the dual inclusion,  $\tau$ a coherent sheaf on $\Pic0 B$ and $\alpha\in\Pic0 A$.  Then $\T_{\Pic0 A}(t_{-\alpha}^*i_*\tau)=p^*\T_{\Pic0 B}(\tau)\otimes P_\alpha$ \emph{here $t_\alpha$ denotes the translation by $\alpha$ on $\Pic0 A$).} 
\end{lemma}
\proof Recalling that, on an abelian variety $C$, the functor $\T_C((\cdot))$ is defined as $R^{\dim C}\Phi_{\cP^{-1}}((\cdot)^\vee)$ (see  (\ref{dual}) for the dualizing functor), the assertion of the Lemma follows from: (i) the second identity in (\ref{compatibility}), (ii) the fact that ${\bf R}{\mathcal H}om_{\Pic0 A}(i_*\tau,\OO_{\Pic0 A})=i_*{\bf R}{\mathcal H}om_{\Pic0 B}(\tau,\OO_B)[\dim A-\dim B]$, and, (iii) the well known identity $\Phi_{\cP^{-1}}^{\Pic0 A}(t_{-\alpha}^*(\cdot))=\Phi_{\cP^{-1}}^{\Pic0 A}(\cdot)\otimes P_\alpha$.  This proves Lemma \ref{T-exchange}.\endproof

Going back to the Theorem, the initial step of the cofiltration is defined as follows. Let $\tau$ be the torsion part of $\T_A(\F)$. We apply the right exact contravariant functor $\T_{\Pic0 A}(\cdot)$ to the exact sequence $0\rightarrow \tau\rightarrow \T_A(\F)\rightarrow \T_A(\F)/\tau\rightarrow 0$. By Proposition \ref{T(T)} we get the exact sequence
\[\T_{\Pic0 A}(\T_A(\F)/\tau)\longrightarrow\F\buildrel{\varphi_0}\over\longrightarrow \T_{\Pic0 A}(\tau)\rightarrow 0
\]
We define $F_1:=\T_{\Pic0 A}(\tau)$. Since the sheaf $ \T_A(\F)/\tau$ is either zero or torsion free, the sheaf $\T_{\Pic0 A}(\T_A(\F)/\tau)$ is either zero or ample (Proposition \ref{naiveFMP}). Therefore the same is true  for $\ker \varphi_0$. Hence we have accomplished the first step of the cofiltration. Next, let $d=\dim \tau$ and $0\rightarrow \tau_{d-1}\rightarrow \tau\rightarrow \tau/\tau_{d-1}\rightarrow 0$ the first step of the torsion filtration of $\tau$. Applying the functor $\T_{\Pic0 A}(\cdot)$ we get the exact sequence
\[\T_{\Pic0 A}(\tau/\tau_{d-1})\rightarrow \F_1\buildrel{\varphi_1}\over\rightarrow F_2:=\T_{\Pic0 A}(\tau_{d-1})\rightarrow 0
\]
We need to prove condition (b) on $\ker \varphi_1$. The sheaf $\sigma:=\tau/\tau_{d-1}$ has pure dimension $d$. Let us pick an irreducible component $V$ of the support of $\sigma$. We apply the functor $\T_{\Pic0 A}(\cdot )$ to the exact sequence $0\rightarrow K \rightarrow \sigma\rightarrow \sigma_{|V}\rightarrow 0$. We get 
\[\T_{\Pic0 A}(\sigma{|V})\rightarrow \T_{\Pic0 A}(\sigma)\rightarrow \T_{\Pic0 A}(K)\rightarrow 0
\]
By Proposition \ref{naiveFMP} if $V$ spans $\Pic0 A$ then the sheaf  $\T_{\Pic0 A}(\sigma_{|V})$ is ample. Otherwise $V$ is a subvariety of a translate of an abelian subvariety $C$ of $\Pic0 A$. In this case, by Lemma \ref{T-exchange}, the sheaf   $\T_{\Pic0 A}(\sigma_{|V})$ is of the form $p^*\T_{\Pic0 B}(G)\otimes P_\alpha$, where $B$ is the dual of $C$, and the sheaf $\T_{\Pic0 B}(G)$ is ample on $B$ again by Proposition \ref{naiveFMP}. This settles the first step of the cofiltration on $\ker\varphi_1$.

 Next, the sheaf $K$ is either zero or of pure dimension $d$. Therefore we can apply the same procedure to the sheaf $K$, and so on. In this way, after a finite number of steps the cofiltration on $\ker\varphi_1$ is settled. We now proceed inductively applying the same procedure to the sheaves $\tau_k$ for $k\le d-1$.
\end{proof}

%%%%%%%%%

\providecommand{\bysame}{\leavevmode\hbox
to3em{\hrulefill}\thinspace}


\begin{thebibliography}{EMS}


\bibitem[ACGH]{acgh} Arbarello, E., Cornalba, M., Griffiths, P.A., Harris, J. {\em Geometry of algebraic curves, vol. I.} Grundlehren der Mathematischen Wissenschaften [Fundamental Principles of Mathematical Sciences], vol. 267. Springer, New York (1985)


\bibitem[BLNP]{bicanonical} Barja, M.A., Lahoz, M., Naranjo, J.C., Pareschi, G., {On the bicanonical map of irregular varieties}, J. Algebraic Geom. 21 (2012), 445–471

\bibitem[CP]{cao-paun}  Cao, J., P\u{a}un, M.: Kodaira dimension of algebraic fiber spaces over abelian varieties. Invent. Math. 207 (2017), 345--387

\bibitem[CLP]{desperados} Caucci, F., Lombardi, L., Pareschi, G., {Derived invariance of the Albanese relative canonical ring}, preprint  arXiv:2206.10739v3 [math.AG]

\bibitem[CCCJ]{cinesi}  Chen, J.-J., Chen, J. A., Chen, M., Jiang, Z., {On quint-canonical birationality of irregular threefolds}, Proc. Lond. Math. Soc.  122 (2021) 234–258.

\bibitem[CJ]{cy} Chen, J. A., Jiang, Z.,  {Positivity in varieties of maximal Albanese dimension}, J. Reine Angew. Math. 736 (2018), 225-253


%\bibitem[D1]{debarre-libro} Debarre, O.,  {Tores et vari\'et\'es abeliennes complexes}, Soci\'et\'e Math\'ematique de France, 1999

 \bibitem[D]{debarre} Debarre, O., {On coverings of simple abelian varieties},  Bull. Soc. Math. Fr. 134  (2006) 253--260

\bibitem[DH]{debarre-hacon} Debarre, O.,  Hacon C., {Singularities of divisors of low degree on abelian varieties}, Manuscripta Math. 122 (2007), 217--228

\bibitem[E]{ein} Ein, L., {An analogue of Max Noether's theorem}, Duke Math. J. 52 (1985), 689--706

\bibitem[EG]{evans-griffith} Evans, E. G.,  Griffith P., {The syzygy problem}, Ann. of Math.  114 (1981), 323--333

%\bibitem[GL]{gl2}Green M.,  Lazarsfeld, R., {Higher obstructions to deforming cohomology groups of line bundles}, J. Amer. Math. Soc. \textbf{1} (1991),  87--103.


%\bibitem[Ha]{ha}  Hacon, C., {A derived category approach to generic vanishing}, J. Reine Angew. Math. \textbf{575} (2004), 173--187.


\bibitem[HPS]{haposc} Hacon, Ch., Popa, M., Schnell, Ch.: {Algebraic fiber spaces over abelian varieties: around a recent theorem by Cao and P\u{a}un}. In: {\em Local and Global Methods in Algebraic Geometry}, Contemp. Math. 712, Amer. Math. Soc., (2018), 143–195 

\bibitem[HuLe]{huybrechts-lehn} Huybrechts D., Lehn,  C., {\em The geometry of moduli spaces of sheaves}, second edition, Cambridge University Press (2010)

\bibitem[K]{kubota} Kubota, K., {Ample sheaves}, J. Fac. Sci. Univ. Tokyo Sect. I A Math. 17 (1970), 421--430

\bibitem[I]{ito}  Ito, A., {M-regularity of Q-twisted sheaves and its application to linear systems on abelian varieties}, Trans. Amer. Math. Soc. 375 (2022), 6653–6673


\bibitem[J]{zhi} Jiang, Z., {On Severi type inequalities}, Math. Ann. 379 (2021), 133–158

\bibitem[JLT]{jlt} Jiang, Z., Lahoz, M., Tirabassi, S, {On the Iitaka fibration of varieties of maximal Albanese dimension},
Int. Math. Res. Not. 13 (2013), 2984--3005 

\bibitem[JP]{ens} Jiang Z.,  Pareschi G., {Cohomological rank functions on abelian varieties}, Annales de L'Ecole Normale Superieure 53 (2020), 815 - 846

\bibitem[JS]{jiang-sun}  Jiang, Z., Sun, H., {Cohomological support loci of varieties of Albanese fiber dimension one}, Trans. Amer. Math. Soc. 367 (2015), 103–119



\bibitem[L] {laz2} Lazarsfeld, R., {\em Positivity in algebraic geometry. II. Positivity for vector
bundles, and multiplier ideals. } Ergebnisse der Mathematik und ihrer Grenzgebiete. 3. Folge [Results in Mathematics and Related Areas. 3rd Series], 49. Springer-Verlag, Berlin, 2004.

\bibitem[LY] {lin-yu} Lin, X., Yu Ch., {Indecomposabilty of the bounded derived categories of Brill-Noether varieties}, preprint arXiv:2111.10997v1 [math.AG] 


\bibitem[LPS]{loposc}  Lombardi L., Popa M., Schnell Ch., {Pushforwards of pluricanonical bundles
under morphisms to abelian varieties}, J. Eur. Math. Soc. 22 (2020), 2511--2532

\bibitem[M1]{M1} Meng, F., Pushforwards of klt pairs under morphisms to abelian varieties. Math. Ann. 380 (2021), 1655–1685


\bibitem[M2]{M2} Meng, F., Estimates on the Kodaira dimension for fibrations over abelian varieties, preprint, arXiv:2207.08359 [arXiv]

\bibitem[MP]{meng-popa} Meng, F., Popa, M., Kodaira dimension of fibrations over abelian varieties,  arXiv:2111.14165 [arXiv]

\bibitem[Mu1]{semihom} Mukai, S., {Semi-homogeneous vector bundles on abelian varieties} J. Math. Kyoto Univ. 18 (1978), 239--272

\bibitem[Mu2]{mukai} Mukai, S., {Duality between D($X$) and D($\hat X$) with its application to Picard sheaves}, Nagoya Math. J. 81 (1981), 153--175.




\bibitem[O]{oprea} Oprea, D., {The Verlinde bundles and the semihomogeneous Wirtinger duality}, J. Reine Angew. Math. 654 (2011), 181--217



\bibitem[P1]{msri}  Pareschi, G., {Basic results on irregular varieties via Fourier-Mukai methods}, in {\em Current Developments in Algebraic Geometry}, Math. Sci. Res. Inst. Publ., vol. 59, Cambridge Univ. Press (2012), 379--403
\bibitem[P2]{sing} Pareschi, G., {Singularities of Divisors of Low Degree on Simple Abelian Varieties}, IMRN 21 (2021) 16724--16733

\bibitem[PP1]{reg1} Pareschi, G., Popa, M., {Regularity on abelian vareties, I}, Journal of the American Mathematical Society 16 (2002), 285--302

\bibitem[PP2]{reg2} Pareschi , G., Popa, M., {Regularity on abelian varieties, II: basic results on linear series and defining equations}, Journal of Algebraic Geometry 13 (2004) 167--193

\bibitem[PP3]{survey1} Pareschi G., Popa, M., {M-regularity and the Fourier-Mukai transform}, Pure and Applied Mathematics Quarterly (2008),
587--611

\bibitem[PP4]{duke} Pareschi, G., Popa, M., {Strong generic vanishing and a higher-dimensional Castelnuovo–de Franchis
inequality}. Duke Math. J. 150 (2009) 269–285 

\bibitem[PP5]{reg3} Pareschi G.,  Popa M., {Regularity on abelian varieties III: relationship with Generic  Vanishing and  applications},
in {\em Grassmannians, moduli spaces and vector bundles}, 
Clay Math. Proc., 14, Amer. Math. Soc., (2011), 141--167,

\bibitem[PP6]{GV} Pareschi G., Popa, M., {GV-sheaves, Fourier-Mukai transform, and Generic Vanishing},
American Journal of Mathematics, 133 (2011) 235--271

\bibitem[PPS]{paposc} Pareschi G., Popa M., Schnell, Ch.,  {Hodge modules on complex tori and generic vanishing for
compact K\"ahler manifolds}, Geom. Topol. 21 (2017), 2419--2460



\bibitem[R]{raynaud} Raynaud M., Sections des fibr\'es vectoriels sur une courbe, Bull. Soc. Math. France 110 (1982), 103--125

\bibitem[Sch]{schnell}  Schnell Ch., {The Fourier-Mukai transform made easy}, 	Pure and Applied Mathematics Quarterly
Volume 18, Number 4, 1749–1770, 2022

 




\end{thebibliography}
\end{document}